\documentclass[a4paper,10pt]{article}
\usepackage{amsfonts}
\usepackage{mathrsfs}
\usepackage[breaklinks=true]{hyperref}
\usepackage{amsmath,amsthm,amscd,amssymb,graphicx}
\textheight 24cm
\textwidth 15cm
\oddsidemargin 15pt
\evensidemargin 15pt
\topmargin 0pt
\headheight 10pt
\headsep 10pt

\newtheorem{theorem}{Theorem}[section]
\newtheorem{lemma}[theorem]{Lemma}
\newtheorem{cor}[theorem]{Corollary}
\newtheorem{pro}[theorem]{Proposition}
\theoremstyle{definition}

\theoremstyle{remark}
\newtheorem{remark}[theorem]{Remark}

\numberwithin{equation}{section}


\newcommand{\norm}[1]{\lvert#1\rvert}
\newcommand{\mnorm}[1]{\lvert#1\rvert}
\newcommand{\ipro}[2]{\left\langle {#1},{#2}\right\rangle}


\newcommand{\Q}{\mathcal{Q}}
\newcommand{\J}{\textsc{J}}

\begin{document}
\title{Stable laws and spectral gap properties for affine random walks}
\author{Zhiqiang Gao\thanks{School of Mathematical Sciences, Beijing Normal University, Laboratory of Mathematics and Complex Systems, 100875 Beijing, China (e-mail: gaozq@bnu.edu.cn) } \and Yves Guivarc'h\thanks{Corresponding author, IRMAR, Universit\'e de Rennes-1, Campus de Beaulieu, 35042 Rennes Cedex, France (e-mail: yves. guivarch@univ-rennes1.fr). }  \and Emile Le Page\thanks{Laboratoire de Math\'ematiques de Bretagne Atlantique, UMR CNRS 6205, Universit\'{e} de Bretagne Sud,  Campus de
Tohannic, BP 573, 56017 Vannes,  France  (e-mail: emile.le-page@univ-ubs.fr). } }
\maketitle
\begin{abstract}
We consider a general multidimensional affine recursion with corresponding Markov operator $P$ and a unique $P$-stationary measure. We show spectral gap properties on H\"older spaces for the corresponding Fourier operators and we deduce convergence to stable laws for the Birkhoff sums along the recursion. The parameters of the stable laws are expressed in terms of basic quantities depending essentially on the matricial multiplicative part of $P$.
Spectral gap properties of $P$ and homogeneity at infinity of the $P$-stationary measure play an important role in the proofs.
 \end{abstract}
\section{Introduction and main results}

We consider the vector space $V= \mathbb{R}^d$ endowed with the scalar product
$\ipro{{x}}{{y}} = \sum_{i=1}^d x_iy_i$ and the norm $\norm{x}=\left(\sum_{i=1}^d|x_i|^2\right)^{1/2}$.
We denote by $H=V\rtimes G$ the affine group of $V$, with  $G=GL(d,\mathbb{R})$, i.e. the set of maps $h$ of the form $hx=gx+b (b\in V, g\in G)$.
Let $\mu$ be a probability measure on $H$ and $x\in V$.
We denote by $\mathbb{P}$ the product measure $\mu^{\otimes \mathbb{N}}$ on $\Omega=H^{ \mathbb{N}}$ and we consider the recurrence relation with random coefficients:
\begin{equation}\label{rwg1-1}
 X_0^x=x,\quad  X_n^x=M_n X_{n-1}^x+Q_n \quad (n\geq 1),
\end{equation}
where $(Q_n, M_n) \in H$ are i.i.d. random variables with generic copy $ (Q, M)$  and with law $\mu$.
Let $\bar{\mu}$ be the projection of $\mu$ on $G$, i.e. the law of $M$, and let $[\texttt{supp} \bar{\mu}]$ be the closed subsemigroup generated by the support of $\bar{\mu}$. We will denote by $P$ the corresponding Markov operator on $C_b(V)$, the space of continuous bounded functions on $V$:
\begin{equation*}
    P\varphi (x)=\int \varphi (gx+b) d\mu(h), \quad \varphi \in  C_b(V).
    \end{equation*}
    We observe that if $M_n=Id$ (resp $Q_n=0$), then $X_n^x$ is an additive (resp.
multiplicative) random walk on $V$ (resp. $V\backslash \{0\}$)(Cf \cite{Furstenberg72,GuivarchRaugi84,Levy54}).
Basic aspects of these special processes continue to hold in the general
case of $X_n^x$, and give a heuristic guide for the study of
the affine random walk $X_n^x$.
On the other hand,
independently of any density condition for $\mu$, the conjunction of these
two different processes give rise to new properties, in particular
spectral gap properties for $P$ (Cf \cite{BuraczewskiGuivarch10PTRF,GuivarchLePage08spectral}) and homogeneity at infinity
for the $P$-stationary measure(Cf \cite{BuraczewskiGuivarch09PTRF,Guivarch06DS,GuivarchLePage10}).

For a positive Radon measure $\rho$ on $V$ we denote $ \rho P$ the new measure obtained from $\rho$ by the dual action of $P$. Our hypothesis will imply that the above recursion \eqref{rwg1-1} has a unique stationary measure $\eta$ which satisfies $ \eta P= \eta$ and  has an unbounded support.
The probability measure $ \eta$ is the limit distribution of $X_n^x$. A remarkable property of $\eta $ is its ``homogeneity at infinity", a property which was first observed  in \cite{Kesten73Acta} for the tails of $\eta$, extended to the general case in \cite{LePage83}  and further developed in \cite{Alsmeyer10,BuraczewskiGuivarch09PTRF,Goldie91AAP}, under special  conditions. See  \cite{Guivarch06DS} for a survey of  \cite{LePage83} as well for a precise description of the homogeneity property of $\eta$, proved in a special case in \cite{BuraczewskiGuivarch09PTRF} and in a generic case in \cite{GuivarchLePage10}.

In this paper we are interested in the limit behavior of the sum $S_n^x= \sum_{k=0}^n X_k^x$, conveniently normalized. For $d=1$ this question is connected with the slow diffusion behavior of a simple random walk on $ \mathbb{Z}$ in a random medium (See \cite{KestenKozlovSpitzer75,Solomon75}).
The similar problem for a finitely
supported random walk on $\mathbb{Z}$ in a random medium is  connected  to  the study of a recurrence relation of  the form \eqref{rwg1-1} (See \cite{Goldsheid08PTRF, HongWang10}). More generally, the equation \eqref{rwg1-1} is of fundamental interest for
the study of generalized autoregressive processes( Cf \cite{BougerolPicard92,Kesten73Acta}). In
particular equation \eqref{rwg1-1} is a basic model in collective risk theory(\cite{Goldie91AAP}); in the context of extreme value theory, the corresponding convergence problem for normalized sample autocorrelations of a GARCH model is considered in \cite{Mikosch2000}.

For $d=1$,  and under aperiodicity conditions,  the limit behavior
of $S_n^x$ is described in \cite{GuivarchLePage08spectral}. For $d>1$, it turns out that, in the generic case considered below,
the limits are stable laws of general type and that the multiplicative part of the recursion plays a dominant role in the asymptotics. For
$d\geq 1$, in the case where $M_n $ takes values in the similarity
group of $V$, the limit behavior of $S_n^x$ is described in
\cite{BuraczewskiGuivarch10PTRF}; the homogeneity at infinity result
of \cite{BuraczewskiGuivarch09PTRF} plays an essential role in the
proof,  and \cite{BuraczewskiGuivarch10PTRF} contains a detailed description of
the limit laws  which turned out to be semi-stable in the sense of P. L\'{e}vy (See \cite[p.204]{Levy54}). For other situations where stable laws appear naturally in limits theorems in sums of non i.i.d random variables we refer to (\cite[p.321-323]{Levy54} ) and  \cite{BabillotPeigne06}.
Here we consider relation \eqref{rwg1-1} in the case where $[\texttt{supp}\bar{\mu}]$ is
``large",  a case which is generic and opposite to the case of
\cite{BuraczewskiGuivarch10PTRF}. We will need  the detailed
information on the stationary law $\eta$ of $P$ given in \cite{GuivarchLePage10} and summarised in Theorem \ref{rwgTh2-3} below;  also as in
\cite{BuraczewskiGuivarch10PTRF,GuivarchLePage08spectral}, a basic
role will be played by the spectral properties of the Fourier operators $P_v
(v\in \mathbb{R})$ defined by $P_v\varphi= P(\mathcal{X}_v \varphi )
$,  where $ \mathcal{X}_v(x)= e^{i\ipro{v}{x} }$.
Furthermore, the homogeneity at infinity of $\eta$  plays an essential role and implies that the
dominant eigenvalue of $P_v$ has an asymptotic expansion  at 0 in
terms of fractional powers of $\norm{v}$.  These properties allow us
to develop a detailed analysis and to prove limit theorems.  More generally, it
turns out that,  in the context of random walks associated with non abelian semigroup actions,
spectral gap properties are valid in certain functional
spaces for large classes of random walks. Usually, such properties are studied in the context of the so called ``Doeblin  condition"(See \cite{Alsmeyer10},\cite{DamekMMZ11} for example). Here instead, our study is based on the Ionescu-Tulcea and Marinescu theorem(\cite{Ionescu50AM}). This allows us to get spectral gap properties without density condition on $\mu$ or $\bar{\mu}$. See
\cite{CantatLeborgne05,ConzeGuivarch11,Dolgopyat02IJM,Goldsheid96PTRF,Furman99ETDS,Gouezel09DMJ,GuivarchHardy88AIHP}
for different classes of situations where analogous ideas are used.
Here $V$ can be considered as a boundary (see \cite{Furstenberg72})
for the random walk on $H$ defined by $\mu$, and we will use spectral gap
properties for $P_v (v\in V)$ in Banach spaces of H\"{o}lder functions with slow growth at infinity. In \cite{ConzeGuivarch11} and \cite{Furman99ETDS} the relevant
spaces are $L^2$-spaces, while in
\cite{CantatLeborgne05,Dolgopyat02IJM,Gouezel09DMJ}, they are of
mixed type. This type of analysis is not restricted to homogeneous
spaces of Lie groups  as shown in \cite{Mirek11PTRF} for certain
classes of Lipschitz maps instead of affine maps.
Here we follow the general line of \cite{GuivarchLePage08spectral,BuraczewskiGuivarch10PTRF}. With respect to these papers,
new arguments are needed
for the analysis of relation \eqref{rwg1-1}, in the generic case considered
below(See \cite{GuivarchLePage10}).

The asymptotics of products of random matrices (See
\cite{Guivarch08,Bougerol85RandomMatrices,GuivarchRaugi84}) will play an important
role, and we need to give corresponding notations. We say that
a semigroup $\Gamma\subset G$ is \emph{strongly irreducible }if no
finite union of proper subspaces of $V$ is $\Gamma$-invariant. Also
we say that $g\in G$ is \emph{proximal} if $g$ has a dominant
eigenvalue $\lambda (g) \in \mathbb{R}$ which is the unique
eigenvalue of $g$ such that $|\lambda(g)|=\lim_{n\rightarrow \infty}
\norm{g^n}^{1/n}$ where $\norm{g}= \sup\{ \norm{gx}: \norm{x}=1 \}
$. We say that $\Gamma$ satisfies condition \emph{i-p} if $\Gamma$
is strongly irreducible and contains a proximal element $\gamma$.
 It is proved in \cite{Pra94} that condition \emph{i-p} for $\Gamma$ and its Zariski closure $Zc(\Gamma)$ are equivalent.
Since  $Zc(\Gamma)$ is a closed Lie subgroup of $G$ with a finite number of connected components, condition  \emph{i-p} can be checked in examples
(see Section 5 for some examples).
 Under this condition, the limit set $L(\Gamma)\subset \mathbb{P}^{d-1} $ is the unique $\Gamma$-minimal subset of the projective space   $\mathbb{P}^{d-1} $  and $L(\Gamma )$ is the closure of the set of attracting fixed points of the proximal elements in $\Gamma$.

For $s\geq 0$, we denote $$\kappa(s)= \lim_{n \rightarrow\infty} (\mathbb{E} \mnorm{M_n\cdots M_1}^s)^{1/n},$$
$$s_\infty=\sup\{ s\geq 0 ; \kappa(s)<\infty\}.$$
For $g\in G$, we write $v(g)=\sup(|g|, |g^{-1}| )$. If $\mathbb{E} (\log v{(M)})<+\infty$, we know that the Lyapunov exponent $$L(\bar{\mu})= \lim_{n \rightarrow\infty} \frac{1}{n} \mathbb{E} (\log \mnorm{M_n\cdots M_1})$$ is well defined, $L(\bar{\mu})=\kappa'(0_+)$ if $s_\infty>0$.
If condition \emph{i-p} is satisfied and $s_\infty >0$, then $\log \kappa(s)$ is strictly convex on $[0,s_\infty)$, hence if $\lim\limits_{s\rightarrow s_\infty}\kappa(s)> 1 $, there exists a unique $\alpha \in (0,s_\infty)$ with $\kappa(\alpha)=1$.

  Our  hypothesis here is the following condition $C$ (See \cite{GuivarchLePage10}):
\\$C_1  \quad [\texttt{supp}\bar{\mu}]$  satisfies condition \emph{i-p},
\\$ C_2 \quad s_\infty>0,$ $ L(\bar{\mu})<0,$
             $\lim\limits_{s\rightarrow s_\infty}\kappa(s)> 1$,
\\ $C_3 \quad  \mathbb{E}(v{(M)}^{\alpha+\delta}+ \norm{Q}^{\alpha+\delta}) <\infty$  \mbox{ for some} $\delta >0$,
\\ $C_4 \quad \texttt{supp}\mu$ has no fixed point in $V$.

Condition $C$ will be assumed in our results (compare with condition ($H$) of \cite{BuraczewskiGuivarch10PTRF}), except if the contrary is specified. We observe that condition {\it i-p} for $[\texttt{supp}  ~\rho] $ is valid on an open dense set in weak topology of measures $\rho $  on $G$. It follows that condition $C$ is open in the weak topology of probability measures  on $H$.
Conditions $C_1$ and $C_3$ are used to prove homogeneity at
infinity of $\eta$,  a property which depends on the spectral gap properties of twisted
convolution operators defined by $\bar{\mu}$
on the projective space of $V$ (Cf \cite{GuivarchLePage10}). Condition $C_2$ plays the
basic role in the homogeneity at infinity of $\eta$.

A real number $t\in \mathbb{R}$ defines a dilation on $V$ which is  denoted by $v\rightarrow t.v$, and we extend this notation to the action of $\mathbb{R}$ on measures on $V$.
A Radon measure $\rho$ on $V$ is said to be $\alpha$-homogeneous if for any $t>0$, $t.\rho =t^\alpha \rho$.

Let $\overline{P}$ be the Markov operator on $V$ defined by
\begin{equation*}
    \overline{P}\varphi (v)= \int \varphi (gv) d\overline{\mu}(g), \quad \mbox{ if } \varphi \in C_b(V).
\end{equation*}
We observe that $\overline{P}$ can be interpreted as the linearisation of $P$ at infinity.
We denote by $\ell^s$ the $s$-homogeneous measure on $\mathbb{R}^*_+$ defined by $\ell^s(dt)= \frac{dt}{t^{s+1}}$.
It is proved in Theorem C of  \cite{GuivarchLePage10} that  if $d>1$ and condition $C$ is valid, there exists $c>0$ and a probability measure $\sigma_\alpha$ on the unit sphere $\mathbb{S}^{d-1}$ such that the following vague convergence is valid on $V \backslash\{0\}$:
\begin{equation}\label{rwg1-2}
    \lim_{t\rightarrow 0_+} t^{-\alpha} (t.\eta)= c \sigma_\alpha \otimes \ell^\alpha=
    \Lambda.
\end{equation}
Here $\Lambda$ is defined by the above convergence,
is $\alpha$-homogeneous,  and we have $
\Lambda \overline{P}= \Lambda$. We observe that the equation $\Lambda \overline{P}
= \Lambda$ is a limiting form of the stationarity equation
$\eta P=\eta$. The proof is based on the general renewal theorem of
\cite{Kesten74Ann} and on the spectral gap property of the
operator on the projective space defined by twisted convolution
with $\bar{\mu}$ (See \cite{GuivarchLePage04simple,GuivarchLePage10}).

More generally, if $\eta$ is a probability measure such that the above convergence \eqref{rwg1-2} is valid, we will say that $\eta$ is $\alpha$-homogeneous at infinity. 
A probability $\eta$ on $V$ is said to be stable if  for every integer $n$ there exists a
similarity $h_n$ of the form $h_n(x)= a_n x + b_n (a_n>0, b_n\in V)$ such that the $n^{th}$ convolution power of $\eta $ is the push forward of $\eta $ by
 $h_n$. If $a_n=n^{1/\alpha}$, we say that
$\eta$ is $\alpha$-stable.

Due to Theorem C of \cite{GuivarchLePage10}, if $\texttt{supp} \bar{\mu}$ has no invariant convex cone in $V$, then $\Lambda$ is symmetric and $ \sigma_\alpha\otimes \ell^\alpha$ is the  unique Radon measure defined by the following conditions:
 \begin{align*}
    &\sigma_\alpha \mbox{ is a probability measure on } \mathbb{S}^{d-1}, \\ &(\sigma_\alpha\otimes \ell^\alpha ) \overline{P}=\sigma_\alpha\otimes \ell^\alpha, \quad t.(\sigma_\alpha\otimes \ell^\alpha) = t^\alpha (\sigma_\alpha\otimes \ell^\alpha), \quad \mbox{for all } t>0.
\end{align*}
See \cite{GuivarchLePage10} for more detail. In Section 5 below we
give information on $\sigma_{\alpha}$ and examples of the typical
situations which can occur. In any case $\Lambda$ gives zero measure to any affine subspace, the projection of $ \sigma_\alpha $ on  the projective space
$\mathbb{P}^{d-1}$ is uniquely defined by the above condition and its support is equal to
the limit set $ L ([\texttt{supp}\overline{\mu}])$ in
$\mathbb{P}^{d-1}$.

We will write $g^*$ for the transposed map of $g\in G$, $\bar{\mu}^*$ for the push-forward of $\bar{\mu}$ by $g\rightarrow g^*$. Also for $x\in V$, we write $x^*$ for the linear form $x^*(y)=\ipro{x}{y}$. The exponential $e^{i\ipro{x}{y}}$ will be denoted by $\mathcal{X}_x(y)$ and the characteristic function of a probability measure $\pi$  on $V$ will be defined by
\begin{equation*}
\widehat{\pi}(x)=\int_V \mathcal{X}_x(y)d \pi(y).
\end{equation*}
 Coming back to the affine situation, we will write  $$m=\int x d \eta(x) ,\quad   m_\alpha= \kappa'(\alpha_-). $$

The calculation of the limit law of $S_n^x$ will involve considering the companion  recursion :
\begin{equation}\label{rwg1-3}
    W_0 =0, \quad W_n =M_n^* (W_{n-1}+ v),
\end{equation}
where $v\in V\backslash \{0\}$ is a fixed vector.
We will denote by $T_v$ the corresponding transition operator, i.e.
\begin{equation*}
    T_v(\varphi)(x)=\int \varphi(g^* (x+v)) d\bar{\mu}(g).
 \end{equation*}
Then as above, the unique stationary measure $\eta_v$ of $T_v$ satisfies the weak convergence on $V\backslash \{0\}$:
\begin{equation}\label{rwg1-4}
    \lim_{t\rightarrow 0_+} t^{-\alpha} (t.\eta_v)= \Delta_v\neq 0,
\end{equation}
and $\eta_v, \Delta_v$ satisfy
\begin{equation*}
    \eta_{tv}=t.\eta_v,\quad  \Delta_{tv}= t. \Delta_v \quad \mathrm{for} ~~~t\in \mathbb{R}^{\ast}, \quad   \Delta_v\overline{P}_* = \Delta_v, \quad \Delta_{tv}= t^{\alpha} \Delta_v  \mbox{ for } t>0,
\end{equation*}
where, as above,  $ \overline{P}_* $ is associated with $ \overline{\mu}^*$.

In order to state our first main result, we need to define a kind of Fourier transform $\widetilde{\Lambda}$  of $\Lambda$.
If $\alpha \in (0,2] $, we define $\widetilde{\Lambda}$ as follows:
\begin{eqnarray*}
   && \widetilde{\Lambda}(y)= \int \left(\mathcal{X}_y(x)-1\right)d \Lambda(x), \quad \mathrm{if} \quad 0<\alpha<1,\\
   &&  \widetilde{\Lambda} (y)=\int\left(\mathcal{X}_y(x) -1-i\frac{\ipro{x}{y}}{1+{|\ipro{x}{y}|^2}}\right) d\Lambda(x), \quad \mathrm{if} \quad \alpha =1,\\
   &&  \widetilde{\Lambda}(y)= \int\left(\mathcal{X}_y(x)-1-i\ipro{x}{y}\right) d \Lambda(x), \quad \mathrm{if} \quad 1<\alpha<2, \\
   &&  \widetilde{\Lambda}(y)= -\frac{1}{4} \int \ipro{y}{x}^2 d \sigma_2(x),\quad \mathrm{if} \quad \alpha=2.
\end{eqnarray*}

The function $\exp(\widetilde{\Lambda})$ is the Fourier transform of the limit law of the normalized sum of $\eta$-distributed
i.i.d random variables and $\widetilde{\Lambda}$ satisfies
\begin{equation*}
    \widetilde{\Lambda}(ty)= t^{\alpha}\widetilde{\Lambda}(y) \quad \mbox{for } t>0,  \quad \overline{P}_* \widetilde{\Lambda}=\widetilde{\Lambda},  \mbox{  and~~ } Re\widetilde{\Lambda}(y)<0  \mbox{ for }  y\neq 0. \end{equation*}


We will use also the function $\widetilde{\Lambda}^1$ defined by
 $     \widetilde{\Lambda}^1(y)=\widetilde{\Lambda}(\bar{y})\textbf{1}_{[1,\infty)}(\norm{y})$,
where $\overline{y}=y/\norm{y}$ denotes the projection of $y \in V\backslash \{0\}$ on $\mathbb{S}^{d-1}$.

The Fourier transform of the limit law of $S_n^x$ for $\alpha \in (0,2]$ will be shown to be equal to $e^{C_\alpha (v)}= \Phi_\alpha(v)$ where the function $C_\alpha(v) $ is  defined by
   \begin{equation}\label{rwg1-5}
    C_\alpha(v)= \left\{
                   \begin{array}{ll}
                     \alpha m_{\alpha} \Delta_v(\widetilde{\Lambda}^1) , & \mbox{ if } \quad \alpha\in(0,1)\bigcup (1,2]   ; \\
                        m_{1} \Delta_v(\widetilde{\Lambda}^1) + i \gamma (v), &\mbox{ if } \quad \alpha=1  ,
                   \end{array}
                 \right.
   \end{equation}
with
\begin{equation}\label{rwg1-6}
 \gamma(v)= \iint \Bigg[ \frac{\ipro{y+v}{x}}{{{1+|\ipro{y+v}{x}|^2}}}- \frac{\ipro{v}{x}}{ 1+{\norm{x}^2}}-\frac{\ipro{y}{x}}{1+{{|\ipro{y}{x}|^2}}} \Bigg] d\Lambda(x)d\eta_v(y).
\end{equation}
(See the proof of Proposition \ref{rwgpro4-2}.)  We have that for $ t>0$
$$ C_\alpha(tv)= t^\alpha C_\alpha(v) \quad  \mbox{  if  } \alpha \neq 1, \quad \mbox{ and } \quad C_1(tv) = tC_1(v) + i \ipro{v}{ \beta(t)}, $$
where $ \beta(t)= \int \left( \frac{tx}{1+\norm{tx}^2}-\frac{tx}{1+\norm{x}^2} \right)d \Lambda(x) $. Hence $ e^{C_\alpha(v)}$ is the Fourier transform of an infinitely divisible probability measure which belongs to an $ \alpha$-stable convolution semigroup (see \cite{IbraginovLinnik71Indpendent,JurekMason93,SamoTaqqu}).


If $\alpha >2 $, the following covariance  form $q$ of  $\eta$ will enter in the formulas below,
\begin{equation*}
    q(x,y)=\int \ipro{x}{\xi-m}\ipro{y}{\xi-m}d \eta(\xi).
\end{equation*}
We will write $z=\mathbb{E}(M) $ for the averaged operator of $M$ if $\alpha>1$. One sees easily that the operator $ \mathbb{E}M$ on $V$ exists and has spectral radius less than $\kappa(\alpha)=1$, hence  in particular $I-z^*$ is invertible.

We have the following limit theorem for the partial sums $S_n^x$.
\begin{theorem}\label{rwgTh1-5}
Assume that the probability measure $\mu$ on $H=V \rtimes G$ satisfies condition $C$ above. Then if $\dim V >1$,
 we have for any $x\in V$,
\\
1) If $\alpha>2$, $\frac{1}{\sqrt{n}} (S_n^x-n m)$ converges in law to the normal law on $V$ with the Fourier transform
\begin{equation*}
    \Phi_{2+}(v)=\exp(-q(v,v)/2 -q(v, (I-z^*)^{-1}z^*v)).
    \end{equation*}
2) If $\alpha \in (0,2)$, let $t_n=n^{-1/\alpha}$ and
\begin{equation*}
    d_n=\left\{ \begin{array}{ll}
                  0 ,& \quad \alpha\in (0,1); \\
                  n\delta(t_n), &\quad  \alpha=1 ;\\
                  nt_nm, &\quad  \alpha \in (1,2),
                \end{array}
    \right.     \quad
\end{equation*}
with    $\delta(t)=\displaystyle \int_V \frac{tx}{1+\norm{tx}^2}d\eta(x)$   for $ t>0.$
Then $(t_n S_n^x-d_n)$ converges in law to the $\alpha$-stable law with the Fourier transform $\Phi_{\alpha} (v)=\exp(C_\alpha (v))$, with $C_\alpha (v) $ given above.

Furthermore if $\alpha=1$, then for some constant $K_\star>0$,
\begin{equation*}
  |\delta(t)| \leq \left\{ \begin{array}{ll}
                           K_\star |t||\log |t|| , & \quad  for ~~~|t|\leq \frac{1}{2}; \\
                          K_\star |t|, & \quad   for ~~~|t|>\frac{1}{2}.
                         \end{array}
    \right.
\end{equation*}
3) If $\alpha=2$, then $\displaystyle \frac{1}{\sqrt{n\log n}}(S_n^x-nm)$ converges in law to the normal law with Fourier transform
$$    \Phi_{2}(v)=\exp (C_2(v)), \quad \mbox{ where } \quad C_2(v)= -\frac{1}{4}\int (\ipro{v}{w})^2+2\ipro{v}{w} \eta_v(w^*)d \sigma_2(w).$$
4)In all cases, the limit laws are fully non degenerate.
\end{theorem}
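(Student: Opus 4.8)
\emph{Plan.} The plan is to reduce everything to the asymptotics of one family of Fourier operators and to feed into it the homogeneity-at-infinity input \eqref{rwg1-2}. First I would record the \emph{master identity}: writing $f_n(x)=\mathbb{E}_x\big[\exp(i\ipro{v}{S_n^x})\big]$ and conditioning on the first transition $x\mapsto Mx+Q$, the Markov property and time-homogeneity give the one-step relation $f_n=\mathcal{X}_v\,(Pf_{n-1})$ with $f_0=\mathcal{X}_v$, hence
\begin{equation*}
\mathbb{E}_x\big[\exp(i\ipro{v}{S_n^x})\big]=\mathcal{X}_v(x)\,(P_v^{\,n}\mathbf{1})(x),
\end{equation*}
where $\mathbf{1}$ is the constant function and $P_v\varphi=P(\mathcal{X}_v\varphi)$ is exactly the Fourier operator of the statement. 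By L\'evy's continuity theorem it then suffices to show, for each fixed $v$ and $x$, that $e^{-i\ipro{v}{d_n}}\mathcal{X}_{t_nv}(x)\,(P_{t_nv}^{\,n}\mathbf{1})(x)$ converges to the announced $\Phi_\alpha(v)$; since $\mathcal{X}_{t_nv}(x)=e^{it_n\ipro{v}{x}}\to1$, the limit is seen at once to be independent of the starting point $x$.

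The second step is spectral. I would invoke the spectral gap of the operators $P_w$ on the H\"older space of functions of slow growth (obtained via the Ionescu--Tulcea--Marinescu theorem earlier in the paper): for $w$ near $0$ there is a dominant simple eigenvalue $k(w)$ with a decomposition $P_w=k(w)\Pi_w+N_w$, where $\Pi_w$ depends continuously on $w$ with $\Pi_0\varphi=\eta(\varphi)\mathbf{1}$ (so $\Pi_0\mathbf{1}=\mathbf{1}$, $k(0)=1$) and the spectral radius of $N_w$ stays uniformly below $\abs{k(w)}$. Applying this with $w=t_nv$ gives $(P_{t_nv}^{\,n}\mathbf{1})(x)=k(t_nv)^n(\Pi_{t_nv}\mathbf{1})(x)+(N_{t_nv}^{\,n}\mathbf{1})(x)$; the remainder is geometrically negligible and $(\Pi_{t_nv}\mathbf{1})(x)\to1$, so the whole problem collapses to computing $\lim_n\big(n(k(t_nv)-1)-i\ipro{v}{d_n}\big)$.

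The crux, and the main obstacle, is the \emph{fractional expansion of the eigenvalue} $k(w)$ as $w\to0$. Because $\eta$ has moments only of order $<\alpha$, the perturbation $\varphi\mapsto P((\mathcal{X}_w-1)\varphi)$ is not differentiable in operator norm, so $k(w)$ is merely H\"older of exponent $\alpha$ at the origin and the naive first-order term $\int(\mathcal{X}_w-1)\,d\eta$ is \emph{not} the full coefficient: contributions of all orders of the perturbation series accumulate at the same order $\norm{w}^\alpha$. This is precisely where homogeneity at infinity enters. I would resum the series through the transposed (companion) dynamics \eqref{rwg1-3}: the propagation of the frequency $v$ by the maps $M_k^{*}$ is governed by $T_v$, whose stationary law $\eta_v$ and tail $\Delta_v$ in \eqref{rwg1-4} encode the heavy-tail contribution, while \eqref{rwg1-2} turns the elementary integral into $\widetilde{\Lambda}$ via $\lim_{t\to0}t^{-\alpha}(t.\eta)=\Lambda$. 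Combining these with the renewal constant $m_\alpha=\kappa'(\alpha_-)$ (the mean spacing in the underlying renewal structure at the critical exponent $\alpha$) should yield
\begin{equation*}
\lim_n\big(n(k(t_nv)-1)-i\ipro{v}{d_n}\big)=C_\alpha(v),
\end{equation*}
with $C_\alpha$ as in \eqref{rwg1-5}; this is the content I would isolate as Proposition \ref{rwgpro4-2}, the drift $\gamma(v)$ of \eqref{rwg1-6} being exactly the imaginary correction left over when $\alpha=1$. The boundary regimes need separate bookkeeping: for $0<\alpha<1$ no centering is needed; for $\alpha=1$ the truncated kernel produces the logarithmic drift, whence the stated bound $\abs{\delta(t)}\lesssim\abs{t}\,\abs{\log\abs{t}}$; for $1<\alpha<2$ the mean $m$ is removed via $d_n=nt_nm$; for $\alpha=2$ the variance diverges logarithmically, forcing the normalization $\sqrt{n\log n}$ and the Gaussian limit $\Phi_2$; and for $\alpha>2$ the expansion is genuinely second order, with the matrix geometric series $\sum_{k\ge0}(z^{*})^{k}=(I-z^{*})^{-1}$ (convergent since $z=\mathbb{E}(M)$ has spectral radius $<\kappa(\alpha)=1$) accounting for the cross term $q(v,(I-z^{*})^{-1}z^{*}v)$ in $\Phi_{2+}$.

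Finally, for part $4$ I would establish full non-degeneracy by showing $\mathrm{Re}\,C_\alpha(v)<0$ for every $v\neq0$ in the stable range and positive-definiteness of the quadratic forms in the Gaussian ranges. For $\alpha\in(0,2)$ this follows from $\mathrm{Re}\,\widetilde{\Lambda}(y)<0$ for $y\neq0$ together with the positivity and non-vanishing of $\Delta_v$; in the Gaussian cases one checks directly that $q(v,v)$ and $C_2(v)$ are nonzero in every direction. The decisive structural input is strong irreducibility (condition $C_1$, i.e. \emph{i-p}): by Theorem \ref{rwgTh2-3} the measure $\Lambda$ charges no affine subspace and the projective support of $\sigma_\alpha$ equals the limit set $L([\texttt{supp}\,\overline{\mu}])$, which spans $V$; hence no form $v\neq0$ can annihilate the relevant measures and no direction of the limit law collapses. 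This yields the non-degeneracy in all cases and completes the proof.
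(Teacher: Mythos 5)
Your proposal is correct and follows essentially the same route as the paper: the characteristic function of $S_n^x$ is written through $P_{t_nv}^{\,n}\mathbf{1}$ (Proposition \ref{rwgpro3-1}), the Ionescu--Tulcea--Marinescu spectral gap (Theorem \ref{rwgTh3-4}, Proposition \ref{rwgpro3-6}) reduces the problem to the eigenvalue asymptotics $n(k(t_nv)-1)-i\ipro{v}{d_n}\to C_\alpha(v)$, which is exactly Theorem \ref{rwgTh4-1} proved via the intertwining with $T_v$ and homogeneity at infinity, and non-degeneracy comes from $\mathrm{Re}\,C_\alpha(v)<0$ together with the support properties of $\Lambda$ and $\Delta_v$ given by Theorem \ref{rwgTh2-3}. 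The only cosmetic discrepancy is that you attribute the eigenvalue expansion to Proposition \ref{rwgpro4-2}, whereas in the paper that expansion is Theorem \ref{rwgTh1-3}/\ref{rwgTh4-1} and Proposition \ref{rwgpro4-2} only identifies the tail expression $\widehat{C}_\alpha(v)$ with the formula \eqref{rwg1-5} for $C_\alpha(v)$.
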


The proof of Theorem \ref{rwgTh1-5} is based on the method of  characteristic functions. The characteristic function of $S_n^x$ can be expressed in terms of iterates of the  Fourier operator $P_v$ defined above.
 This operator  acts as a bounded operator on a certain Banach space $\mathbb{B}_{\theta, \varepsilon,\lambda}$ (defined below) of unbounded functions on $V$ and has ``nice" spectral properties on $\mathbb{B}_{\theta, \varepsilon,\lambda}$. Moreover $P_0=P $ and the spectral properties of $P_v$ allow to control the perturbation  $P_v$ of $P$ as well as its dominant eigenvalue $k(v)$. Theorem \ref{rwgTh1-5}   follows from the asymptotic expansion of $k(v)$ at $v=0$, which is based on the homogeneity at infinity of $\eta$ and $\eta_v$. The spectral properties of $P_v$ follow from a theorem of Ionescu-Tulcea and Marinescu based on certain  functional inequalities proved below  which are consequences of the condition $L(\bar{\mu})<0$.

 We denote by $r(U)$  the spectral radius  of a bounded linear operator $U$. The spectral properties of $P_v$ are described by  the:
 \begin{theorem}\label{rwgTh1-6}
 If $v\in V$, the operator $P_v$ on $\mathbb{B}_{\theta,\varepsilon,\lambda}$ defined by $P_v f =P(\mathcal{X}_v f )$ has the following properties:
 \\ 1) $P_v$ is a bounded operator with spectral radius at most 1,
 \\ 2) If $v\neq 0$, $r(P_v)<1$,
 \\ 3) If $v=0$  and $\pi_0$ is the projection on $\mathbb{C}\mathbf{1}$ defined by  $ \pi_0 \varphi = \eta(\varphi)\mathbf{1}$, we have for any $ \varphi \in \mathbb{B}_{\theta,\varepsilon,\lambda}$ : $$P_0\varphi =  \pi_0 \varphi+ \mathcal{Q}\varphi$$
 where $\mathcal{Q}\pi_0=\pi_0 \mathcal{Q}=0$ and $r(\mathcal{Q})<1$.
 \\ 4) If $v$ is small, $P_v$ has a unique eigenvalue $k(v)$ with $|k(v)|= r(P_v)$. Furthermore  there exists a one dimensional projection $\pi_v$ and a bounded operator $\mathcal{Q}_v$ such that $ \mathcal{Q}_v \pi_v=\pi_v \mathcal{Q}_v=0$, $r(\mathcal{Q}_v)<|k(v)|$  and
 $$P_v \varphi= k(v) \pi_v \varphi+ \mathcal{Q}_v \varphi, \quad   for~~~ any ~~~ \varphi\in \mathbb{B}_{\theta,\varepsilon,\lambda}.$$
  Furthermore $k(v), \pi_v, \mathcal{Q}_v$ depend continuously on $v$.
  \end{theorem}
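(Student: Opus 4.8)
The plan is to derive all four assertions from quasi-compactness of $P_v$ on $\mathbb{B}_{\theta,\varepsilon,\lambda}$, established through the Ionescu-Tulcea and Marinescu theorem, followed by a perturbative analysis around $v=0$. I would first verify that each $P_v$ is bounded on $\mathbb{B}_{\theta,\varepsilon,\lambda}$, controlling both the growth-weighted supremum and the weighted H\"older seminorm of $P_v f$ by composing $f$ with the random affine maps and using the moment hypothesis $C_3$. The decisive step is a Doeblin--Fortet (Lasota--Yorke) inequality of the form $\|P_v^n f\| \le C\rho^n\|f\| + C_n\,|f|'$ with $\rho<1$, where $|f|'$ is a weaker, lower-weight norm; the geometric contraction $\rho^n$ of the H\"older part comes precisely from the negativity of the Lyapunov exponent $L(\bar{\mu})<0$, which forces $\mathbb{E}\mnorm{M_n\cdots M_1}^\theta$ to decay geometrically for $\theta$ small, while $C_3$ keeps the relevant expectations finite. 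Together with relative compactness of the $\|\cdot\|$-unit ball in $|\cdot|'$ (an Arzel\`a--Ascoli argument on compacta, the growth weight controlling the tails), the Ionescu-Tulcea and Marinescu theorem then gives that $P_v$ is quasi-compact with essential spectral radius at most $\rho<1$.

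Part 1) follows from the pointwise domination $|P_v f|\le P|f|$ together with $P\mathbf{1}=\mathbf{1}$ and stationarity of $\eta$: any eigenvalue $\lambda$ with eigenfunction $f$ satisfies $|\lambda|\,\eta(|f|)\le \eta(P|f|)=\eta(|f|)$, whence $|\lambda|\le 1$. For $v=0$ (part 3) I would identify the peripheral spectrum of $P_0=P$: the constant $\mathbf{1}$ is fixed, $\eta$ is the dual eigenvector, so $\pi_0\varphi=\eta(\varphi)\mathbf{1}$ is the spectral projection at $1$; uniqueness of the stationary measure makes $1$ simple, and there can be no other eigenvalue of modulus $1$, since such an eigenfunction would have $P$-harmonic constant modulus, hence be constant, leading to a phase relation incompatible with strong irreducibility or with $C_4$. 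This yields $P_0=\pi_0+\mathcal{Q}$ with $\mathcal{Q}\pi_0=\pi_0\mathcal{Q}=0$ and $r(\mathcal{Q})<1$.

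Part 2) is the crux. By quasi-compactness and part 1), the spectrum of $P_v$ on the unit circle consists only of eigenvalues, so it suffices to exclude any eigenvalue of modulus $1$. If $P_vf=\lambda f$ with $|\lambda|=1$, then $|f|\le P|f|$ pointwise and $\eta(P|f|)=\eta(|f|)$ force $|f|$ to be $P$-harmonic, hence constant by part 3 above; equality in the triangle inequality then produces a cohomological equation $e^{i\langle v,\,gx+b\rangle}\,u(gx+b)=\lambda\,u(x)$ holding $\mu$-almost surely, where $u=f/|f|$ is unimodular and continuous. The hard part will be to show this relation is incompatible with $v\neq 0$: I would iterate it along the trajectories $M_n\cdots M_1$, pass to the projective picture supplied by condition \emph{i-p} (where directions converge to the limit set $L([\texttt{supp}\,\bar{\mu}])$), and invoke the absence of a common fixed point ($C_4$) to force $v=0$. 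This aperiodicity argument, where the fine geometry of $[\texttt{supp}\,\bar{\mu}]$ enters, is the main obstacle of the whole proof.

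Finally, part 4) is a perturbation statement. Since $P_0$ carries an isolated simple eigenvalue at $1$, separated by a genuine gap from the remainder of its spectrum by part 3 above, and since $v\mapsto P_v$ is continuous for the operator norm on $\mathbb{B}_{\theta,\varepsilon,\lambda}$ --- a continuity I would check by dominated convergence from the bound $|\mathcal{X}_v(x)-\mathcal{X}_{v'}(x)|\le \min(2,\,\norm{v-v'}\,\norm{x})$ and the moment control $C_3$, the growth parameter $\varepsilon$ being chosen so the extra factor $\norm{x}$ is absorbed --- the analytic perturbation theory of Kato applies. For $v$ small it furnishes a simple eigenvalue $k(v)$ with $|k(v)|=r(P_v)$, a rank-one projection $\pi_v$ and a complementary operator $\mathcal{Q}_v$ satisfying $\mathcal{Q}_v\pi_v=\pi_v\mathcal{Q}_v=0$ and $r(\mathcal{Q}_v)<|k(v)|$, all depending continuously on $v$, which is exactly the asserted decomposition.
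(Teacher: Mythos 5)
Your overall architecture --- a Doeblin--Fortet (Lasota--Yorke) inequality plus the Ionescu-Tulcea--Marinescu theorem to get quasi-compactness, exclusion of peripheral eigenvalues, then a perturbation argument near $v=0$ --- is exactly the paper's (Proposition \ref{rwgpro3-2}, Theorem \ref{rwgTh3-4}, Proposition \ref{rwgpro3-6}), and parts 1) and 3) of your proposal are essentially sound; for 1) you should run the eigenvalue bound pointwise, $|\lambda|^n|f(x)|\le P^n|f|(x)\le D|f|_\theta(1+\norm{x})^\theta$, to close the loophole $\eta(|f|)=0$ (power-boundedness from Proposition \ref{rwgpro3-2} is the clean route), and for 3) the paper argues more directly, without any peripheral-spectrum discussion: on $\texttt{Ker}\,\eta$ the seminorm $[\cdot]_{\varepsilon,\lambda}$ is an equivalent norm and Proposition \ref{rwgpro3-2} with $v=0$ contracts it, giving $r(\mathcal{Q})<1$ at once. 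The first genuine gap is part 2), which you yourself flag as ``the main obstacle'': your plan (iterate the cocycle, pass to the projective limit set, invoke $C_4$) is not an argument, and it is not the mechanism that works. The paper's Proposition \ref{rwgpro3-3} proceeds as follows: from the a.e.\ identity $z^nf(x)=e^{i\ipro{v}{S_n^x}}f(X_n^x)$ on $\texttt{supp}\,\eta$ one gets, for $x,y\in\texttt{supp}\,\eta$,
\begin{equation*}
\frac{f(x)}{f(y)}\, e^{i\ipro{v}{Z_n(y-x)}}=\frac{f(X_n^x)}{f(X_n^y)},\qquad Z_n=\sum_{k=1}^n M_k\cdots M_1 ;
\end{equation*}
the H\"older seminorm together with $\lim_n\mathbb{E}\mnorm{M_n\cdots M_1}^{2\lambda+\varepsilon}=0$ forces the right-hand side to $1$ along a subsequence a.s., while $Z_n\to Z$ a.s.\ (Proposition \ref{rwgpro2-5}), whence $f(x)/f(y)=e^{i\ipro{Z^*(\omega)v}{x-y}}$ for a.e.\ $\omega$. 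Since the left side is deterministic, the law $\eta_v$ of $Z^*v$ --- the stationary measure of the companion recursion \eqref{rwg1-3} --- is carried, for each pair $(x_j,y_j)$ in $\texttt{supp}\,\eta$ with $x_j-y_j$ spanning $V$, by a countable union of parallel hyperplanes; intersecting over $j$ shows that $\texttt{supp}\,\eta_v$ is discrete, contradicting the non-atomicity of $\eta_v$ (Proposition \ref{rwgpro2-2} applied to $\mu^*_v$ via Proposition \ref{rwgpro2-5}). The objects your sketch lacks are precisely the a.s.\ limit $Z^*v$ and its law $\eta_v$; the limit set $L([\texttt{supp}\bar{\mu}])$ plays no role in this step.

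The second gap is part 4): Kato's perturbation theory is the wrong tool here, because it requires $v\mapsto P_v$ to be continuous in operator norm on $\mathbb{B}_{\theta,\varepsilon,\lambda}$, and that is not available on such weighted H\"older spaces. Your dominated-convergence sketch controls $(P_v-P_w)f$ pointwise in $x$, but the estimate $|\mathcal{X}_v(hx)-\mathcal{X}_w(hx)|\le 2\norm{v-w}^{\delta}\norm{hx}^{\delta}$ introduces extra growth which cannot be ``absorbed'': in particular the weighted H\"older seminorm $[(P_v-P_w)f]_{\varepsilon,\lambda}$ is not small as $v\to w$ uniformly over the unit ball of $\mathbb{B}_{\theta,\varepsilon,\lambda}$. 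What can be proved is only the strong-to-weak estimate of Lemma \ref{rwglem3-5},
\begin{equation*}
|(P_v-P_w)f|_{\gamma}\le C\norm{v-w}^{\delta}\,||f||_{\theta,\varepsilon,\lambda},\qquad \gamma\in[\lambda+2\varepsilon,\theta],
\end{equation*}
i.e.\ continuity of the perturbation from the strong norm into a sup-type norm only. This is exactly the hypothesis of the Keller--Liverani perturbation theorem \cite{KellerLiverani99spectrum}, which, combined with the Doeblin--Fortet inequalities of Proposition \ref{rwgpro3-2} (uniform in $v$), is what the paper invokes in Proposition \ref{rwgpro3-6} to produce $k(tv)$, the rank-one projection $\pi_{tv}$ and $\mathcal{Q}_{tv}$ with $r(\mathcal{Q}_{tv})\le\rho<|k(tv)|$, all depending continuously on $t$. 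Replacing Kato by Keller--Liverani (and keeping the uniformity in $v$ of your Lasota--Yorke constants, which your write-up does not stress but which is essential) makes this part correct.
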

  These spectral properties will allow us to reduce the study of the iterated operator $P_v^n$ to the study of  its dominant eigenvalue $k^n(v)$; hence $k(v)$ plays here the role of  a characteristic function for the convolution operator $P$ defined by $\mu$ on $C_b(V)$.

The asymptotic behavior  of $k(v)$ at $v=0$ is given by the
\begin{theorem}\label{rwgTh1-3}
Let $v\in V\backslash \{0\}$ and let $C_\alpha(v) $ be given by \eqref{rwg1-5}.
\\
1) If $0<\alpha<1$, then $$\lim_{t\rightarrow 0_+} \frac{k(tv)-1}{t^{\alpha}}=C_{\alpha}(v).$$
\\ 2)If $\alpha =1 $, then
\begin{equation*}
    \lim_{t\rightarrow 0_+} \frac{k(tv)-1-i\ipro{v}{\delta(t)}}{t}=C_1(v).
\end{equation*}
\\ 3)If $1<\alpha<2$, then
$$ \lim_{t\rightarrow 0_+}\frac{k(tv)-1-i\ipro{v}{tm}}{t^\alpha} =C_\alpha (v).$$
\\ 4)If $\alpha=2$, then
\begin{equation*}
  \lim_{t\rightarrow 0} \frac{k(tv)-1-i\ipro{v}{tm}}{t^2|\log|t||} =2C_2(v).
\end{equation*}
\\ 5)  If $\alpha>2$, then
\begin{equation*}
    \lim_{t\rightarrow 0} \frac{k(tv)-1-i \ipro{v}{tm}}{t^2}= C_{2+}(v),
\end{equation*}
with $$C_{2+}(v)=-\frac{1}{2} q(v,v)- q(v, (I-z^*)^{-1} z^*v) .$$
\end{theorem}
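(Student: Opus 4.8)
The plan is to reduce the asymptotics of $k(v)$ to a single integral against the stationary measure $\eta$, and then to extract its leading term from the homogeneity at infinity of $\eta$ and of the companion measure $\eta_v$. Let $e_v\in\mathbb{B}_{\theta,\varepsilon,\lambda}$ be the dominant eigenfunction furnished by Theorem \ref{rwgTh1-6}(4), normalized by $\eta(e_v)=1$, so that $P_v e_v=k(v)e_v$. Applying the functional $\eta$ to the eigenequation and using $\eta P=\eta$ together with $P_v=P\circ\mathcal{X}_v$ (multiplication by $\mathcal{X}_v$ followed by $P$), one gets $\eta(\mathcal{X}_v e_v)=\eta(P_v e_v)=k(v)\eta(e_v)=k(v)$, whence, since $\int(e_v-1)\,d\eta=0$,
\[
k(v)-1=\int_V\bigl(\mathcal{X}_v(x)-1\bigr)\,e_v(x)\,d\eta(x).
\]
Everything reduces to the behaviour of this integral as $v=tw\to 0_+$ with $w$ fixed.

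The first analytic step is quantitative control of the eigenfunction. By the continuity asserted in Theorem \ref{rwgTh1-6}(4), $e_{tw}\to\mathbf 1$ in $\mathbb{B}_{\theta,\varepsilon,\lambda}$; I would upgrade this to a pointwise estimate of $|e_{tw}(x)-\mathbf 1|$ in terms of the $\mathbb{B}_{\theta,\varepsilon,\lambda}$-norm with an explicit rate, so that, after rescaling the integral by $t^{-\alpha}$ (respectively $t^{-2}$ or $t^{-2}|\log t|^{-1}$), replacing $e_{tw}$ by $\mathbf 1$ in the central region $\norm{x}\lesssim 1/t$ contributes only lower-order terms. The genuinely nontrivial part of $e_{tw}-\mathbf 1$ lives in the tail $\norm{x}\gtrsim 1/t$, where the eigenfunction is governed by the linearisation at infinity; it is exactly this tail contribution that converts the naive limit $\widetilde{\Lambda}$ into the companion expression $\alpha m_\alpha\,\Delta_v(\widetilde{\Lambda}^1)$. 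Identifying it uses the companion recursion \eqref{rwg1-3}, the homogeneity \eqref{rwg1-4} of $\eta_v$ and $\Delta_v$, the invariances $\Lambda\overline{P}=\Lambda$ and $\Delta_v\overline{P}_*=\Delta_v$, and a renewal computation over the scales crossed by the dual walk $W_n$, whose crossing rate produces the constant $m_\alpha=\kappa'(\alpha_-)$.

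With the eigenfunction under control, the five regimes follow from the standard L\'evy--Khinchine truncation applied to $\int(\mathcal{X}_{tw}-1)e_{tw}\,d\eta$, the $(\alpha+\delta)$-moment bound in $C_3$ dominating the region near the origin and the vague convergence $t^{-\alpha}(t.\eta)\to\Lambda$ of \eqref{rwg1-2} controlling the tail. For $0<\alpha<1$ no centering is needed and the limit is the $\widetilde{\Lambda}$-type functional; for $1<\alpha<2$ one subtracts the mean $i\ipro{v}{tm}$; for $\alpha=1$ one subtracts $i\ipro{v}{\delta(t)}$ and the residual drift $\gamma(v)$ of \eqref{rwg1-6} emerges, the logarithmic divergence of the truncated first moment at the critical exponent giving the bound $\abs{\delta(t)}\le K_\star\abs{t}\abs{\log\abs{t}}$; for $\alpha=2$ the same logarithmic divergence of the second moment forces the normalization $t^2\abs{\log\abs{t}}$ and the factor $2$, with the quadratic form read off from $\sigma_2$; and for $\alpha>2$ a second-order Taylor expansion yields finite variance, the cross term $q(v,(I-z^*)^{-1}z^*v)$ arising from summing the correlations $\ipro{v}{X_j}\ipro{v}{X_k}$ along the recursion, i.e.\ from the resolvent $(I-z^*)^{-1}$ of $z=\mathbb{E}(M)$.

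The principal obstacle is the second step: passing to the limit in the tail and, above all, identifying the exact constant and measure $\alpha m_\alpha\,\Delta_v(\widetilde{\Lambda}^1)$. The convergence in \eqref{rwg1-2} is only vague on $V\setminus\{0\}$ and offers no uniformity near $0$ or at infinity, so one must simultaneously dominate the contribution near the origin (through the moments of $C_3$ and the cancellation built into $\widetilde{\Lambda}$) and the far tail, while tracking the $v$-dependence of $e_v$. The renewal-theoretic interpretation of $\alpha m_\alpha$ as the reciprocal scale-crossing rate of the dual walk, and the very appearance of $\eta_v$ and $\Delta_v$, rest essentially on the homogeneity results of \cite{GuivarchLePage10}; the critical cases $\alpha=1$ and $\alpha=2$, where the relevant truncated moments diverge logarithmically, demand the most delicate bookkeeping.
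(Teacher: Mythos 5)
Your starting identity is correct and is in fact the paper's own (Corollary \ref{rwgpro3-9}, with the normalization $\psi_{tv}(0)=1$ in place of your $\eta(e_v)=1$): $(k(tv)-1)\,\eta(\psi_{tv})=\eta\bigl(\psi_{tv}(\mathcal{X}_{tv}-1)\bigr)$. The gap is in your second step, which is where the entire difficulty sits. You propose to control the eigenfunction by upgrading the norm continuity $||e_{tv}-\mathbf{1}||_{\theta,\varepsilon,\lambda}\to 0$ to a pointwise estimate and then to read off its tail from ``the linearisation at infinity'' plus ``a renewal computation''. But the perturbation theory (Theorem \ref{rwgTh1-6}, Proposition \ref{rwgpro3-7}) only yields bounds of the form $|e_{tv}(x)-1|\leq o(1)(1+\norm{x})^{\theta}$, and such bounds can never produce the precise profile of the eigenfunction at the scale $\norm{x}\sim 1/t$; it is exactly this profile, not its order of magnitude, that enters the limit, since the limit is $\int(\mathcal{X}_v(x)-1)\widehat{\eta_v}(x)\,d\Lambda(x)$ with the characteristic function $\widehat{\eta_v}$ of the companion stationary measure sitting inside the integral. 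The tool you are missing is the intertwining relation of Proposition \ref{rwgpro3-8}, $P_{tv}(\widehat{\mathcal{E}}\circ t)=(\widehat{T_{t,v}^{*}\mathcal{E}})\circ t$, which gives an \emph{exact closed formula} for the dominant eigenfunction, $\psi_{tv}=\widehat{\eta_{t,v}}\circ t$ with $\eta_{t,v}$ the eigenfunctional of $T_{t,v}$. That identity, combined with Lemma \ref{rwglem4-2} (bounding $|\psi_{tv}(x)-\widehat{\eta}_v(tx)|$ separately for $\norm{tx}\leq 1$ and $\norm{tx}>1$) and Corollary \ref{rwgcor4-3}, is what legitimizes replacing $e_{tv}(x)$ by $\widehat{\eta_v}(tx)$; without it your plan has no mechanism connecting the eigenfunction of $P_{tv}$ to the measure $\eta_v$.

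There is a second, related gap: even granting the limit $\widehat{C}_\alpha(v)=\int(\mathcal{X}_v-1)\widehat{\eta_v}\,d\Lambda$ (plus centering terms), its identification with the stated constant $C_\alpha(v)=\alpha m_\alpha\Delta_v(\widetilde{\Lambda}^1)$ is a theorem in itself (Proposition \ref{rwgpro4-2}), not a corollary of homogeneity. The paper proves it by an Abelian argument in an auxiliary parameter $s\uparrow\alpha$: one introduces the $s$-homogeneous eigenmeasures $\Lambda_s$ with $\Lambda_s\bar{P}=\kappa(s)\Lambda_s$, uses the stationarity of $\eta_v$ under $T_v$ to get the exact identity $\widehat{C}_s(v)=\bigl(\kappa(s)^{-1}-1\bigr)\mathbb{E}\bigl(\widetilde{\Lambda}_s(Z^{*}v)\bigr)$, and then computes $\lim_{s\to\alpha_-}(\alpha-s)\mathbb{E}(\widetilde{\Lambda}_s(Z^{*}v))$ by a scale decomposition with a cutoff $\rho(s)\to\infty$; the constant $m_\alpha$ enters as the left derivative $\lim_{s\to\alpha_-}(1-\kappa(s))/(\alpha-s)$, not as a crossing rate. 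Your ``renewal computation over the scales crossed by the dual walk'' names the right objects but is a heuristic where the paper has an actual argument. In short, your skeleton (eigenvalue identity, central/tail splitting, homogeneity at infinity, case-by-case truncation) matches the paper's, but the two load-bearing steps --- the intertwining representation of the eigenfunction and the Abelian identification of the constant --- are both absent.
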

As in \cite{GuivarchLePage08spectral} and \cite{BuraczewskiGuivarch10PTRF}, the proof of Theorem \ref{rwgTh1-3} is based on an intertwining  relation between the families of operators $P_v$ and $T_v$  and on the homogeneity   at infinity of $\eta$, $ \eta_v$ proved in \cite{GuivarchLePage10};  this relation allows us to express $k(v)$ in terms of the stationary measure $\eta$ and an eigenfunctional  for $T_v$.
\begin{remark}~~\\
a) We may observe that, if we add stronger moment conditions (of order greater than 4), part 1 of Theorem \ref{rwgTh1-5}, i.e. convergence to a normal law, follows from  the main result of \cite{HennionHerve04AP}, which is valid also  for more general Lipschitz  maps of $V$ into itself.
\\ b) For $\alpha \in [0,2]$, the limit law of $S_n^x$ is a multidimensional $\alpha$-stable law (see e.g.  \cite{IbraginovLinnik71Indpendent,JurekMason93,Levy54}) where $\alpha$-stability   holds with respect to  the action of the dilation group $\mathbb{R}^*_+$. In  particular  the limit  law is infinitely divisible and  belongs to a  convolution semigroup  of $\mathbb{R}^d$.  This remarkable fact follows from the homogeneity of $\Delta_v$ with respect to $v$, hence from the formula for $C_{\alpha}(v)$. \\
c) It follows from Theorem \ref{rwgTh2-3} below that the negative definite function  $C_\alpha$ satisfies $ Re C_{\alpha}(v)<0$ for $v$ non zero. In section 5 below, we obtain more detailed information on the function $C_\alpha$.  In particular, the function  $C_\alpha$ depends continuously on $\mu$ in a natural weak topology which guarantees continuity of moments of order $\alpha$. Also, given $\bar{\mu}$, the magnitude of $C_\alpha$ is closely related to the magnitude of
the moment  of order  $\alpha$ for $Q$. It follows that, for the stable limiting laws of the theorem, various situations occur, as in the case of sums of $\eta$-distributed i.i.d random variables on $V$: symmetric, non symmetric, supported on a proper convex cone.
\\  d) The fact that the stability group here is $\mathbb{R}^*_+$,  if $\alpha $ belongs to $[0,2]$ instead of a  more complex one as in \cite{BuraczewskiGuivarch10PTRF}, is a  consequence of the following property depending on condition $i$-$p$ and  $d\geq 2$ (see \cite{GuivarchRaugi84,GuivarchUrban05SM}): the closed subsemigroup of $ \mathbb{R}^*_+$  generated by  the moduli of the dominant eigenvalues  for  the proximal elements in $[\texttt{supp} \bar{\mu}]$ is equal  to $ \mathbb{R}^*_+$. This can be compared with the situation of \cite{BuraczewskiGuivarch10PTRF} where semi-stable laws in the sense of \cite[p.204]{Levy54} appear as limits.
As already mentioned Condition $C$ is generically satisfied by $\mu$, and  like in the case $ \alpha >2$ of the main theorem in \cite{BuraczewskiGuivarch10PTRF},  our limit theorem is essentially not changed under perturbation of $\mu$. This open the possibility of getting convergence to stable laws in natural multidimensional stochastic systems.
\\ e) The theorem gives the convergence of  normalized 1-marginals of $S_n^x$. A natural question is the existence of a functional limit theorem, i.e. the convergence towards a stable stochastic  process with continuous time (Cf \cite{Levy54,SamoTaqqu}).

\end{remark}
We note that closely related limit theorems for $S_n^x$ have been obtained recently in
the reference \cite{DamekMMZ11}, under a stronger hypothesis than here. In \cite{DamekMMZ11}, $\bar{\mu}$ dominates a density on $G$ and $[\texttt{supp} \bar{\mu}]$ has no invariant convex cone, hence the limiting law is symmetric. Furthermore $\alpha=2$ is excluded  and
 the case $\alpha=1$  is treated under symmetry restrictions. The method is based on a renewal theorem of \cite{Alsmeyer10} for a Markov chain which satisfies Harris condition.

\section{Homogeneity at infinity of $\mu$-stationary measures}
The following proposition gives the existence and elementary properties of the stationary law of $X_n^x$ in our context. The first part is well known.
\begin{pro}\label{rwgpro2-2} Assume that $\mu$ satisfies condition $C$. Let $$R_n =Q_1 +\sum_{k=1}^{n-1}M_1\cdots M_{k}Q_{k+1}.$$
Then $R_n$ converges a.e. to $$R= Q_1+ \sum_{k=1}^\infty M_1\dots M_k Q_{k+1}$$ and the law of $X_n^x$ converges to the law $\eta$ of $R$.
Furthermore, $\eta $ has no atom, gives measure zero to every affine subspace and $\mathbb{E} (\norm{R}^{\theta})=\int \norm{x}^{\theta} d\eta(x) <\infty$
if $\theta<\alpha$.
\end{pro}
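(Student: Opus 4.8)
The plan is to deduce the convergence and moment statements from the backward representation of the recursion together with the inequality $\kappa(\theta)<1$ for $\theta<\alpha$, and to obtain the regularity of $\eta$ from stationarity combined with conditions $C_1$ and $C_4$. Treating the series for $R$ directly, since $\log\kappa$ is strictly convex with $\log\kappa(0)=\log\kappa(\alpha)=0$ (as recalled before the statement of condition $C$), every $\theta\in(0,\alpha)$ satisfies $\kappa(\theta)<1$; as $\mnorm{M_1\cdots M_k}$ is equidistributed with $\mnorm{M_k\cdots M_1}$, this gives $\mathbb{E}\mnorm{M_1\cdots M_k}^\theta\le C\rho^k$ for some $C>0$, $\rho<1$. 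Condition $C_3$ yields $\mathbb{E}\norm{Q}^\theta<\infty$, and independence of $M_1\cdots M_k$ from $Q_{k+1}$ gives $\mathbb{E}\norm{M_1\cdots M_kQ_{k+1}}^\theta\le C\rho^k\,\mathbb{E}\norm{Q}^\theta$. Taking $\theta\le 1$ and summing (using subadditivity of $t\mapsto t^\theta$) shows $\sum_k\norm{M_1\cdots M_kQ_{k+1}}<\infty$ a.e., so $R_n\to R$ a.e.; for an arbitrary $\theta\in(0,\alpha)$ the same bound, now combined with Minkowski's inequality when $\theta>1$, gives the finite series bound $\mathbb{E}\norm{R}^\theta<\infty$, which is the last assertion.

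Next I would connect $X_n^x$ to $R$. Iterating \eqref{rwg1-1} gives $X_n^x=M_n\cdots M_1x+\sum_{k=1}^n M_n\cdots M_{k+1}Q_k$; relabelling the i.i.d. pairs by $i\mapsto n+1-i$ shows that $X_n^x$ has the same law as $M_1\cdots M_nx+R_n$. Markov's inequality and Borel--Cantelli applied to $\mathbb{E}\mnorm{M_1\cdots M_n}^\theta\le C\rho^n$ give $\mnorm{M_1\cdots M_n}\to 0$ a.e., hence $M_1\cdots M_nx+R_n\to R$ a.e., and the law of $X_n^x$ converges weakly to the law $\eta$ of $R$ for every $x$. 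Writing $R=Q_1+M_1R'$ with $R'\stackrel{d}{=}R$ independent of $(Q_1,M_1)$ shows $\eta P=\eta$, so $\eta$ is the stationary law.

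For the regularity of $\eta$ the tool is the identity $\eta(W)=\int\eta(h^{-1}W)\,d\mu(h)$ coming from stationarity. For the absence of atoms, let $p=\max_x\eta(\{x\})>0$ and $A=\{x:\eta(\{x\})=p\}$, a finite set. For $y\in A$ the identity gives $p=\int\eta(\{h^{-1}y\})\,d\mu(h)\le p$, forcing $\eta(\{h^{-1}y\})=p$ for $\mu$-a.e. $h$, i.e. $h(A)=A$ for $\mu$-a.e. $h$, hence for all $h\in\texttt{supp}\mu$; since affine maps preserve barycenters, the barycenter of $A$ is a common fixed point of $\texttt{supp}\mu$, contradicting $C_4$.

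To show $\eta$ charges no proper affine subspace I would take the smallest dimension $r\in\{0,\dots,d-1\}$ carrying positive mass (if there is none, there is nothing to prove). Minimality forces two distinct $r$-dimensional subspaces to meet in an $\eta$-null set, so the subspaces of maximal mass $p>0$ form a finite nonempty family $\mathcal{A}$, and stationarity gives $h(\mathcal{A})=\mathcal{A}$ for $\mu$-a.e. $h$, whence the linear parts permute the finite set of directions $\{\vec W:W\in\mathcal{A}\}$. For $1\le r\le d-1$ these are proper nonzero subspaces, so $\bigcup_{W\in\mathcal{A}}\vec W$ is a finite union of proper subspaces invariant under $[\texttt{supp}\bar{\mu}]$, contradicting strong irreducibility ($C_1$); the case $r=0$ is the atomic case already excluded. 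The main obstacle is this last step: one must justify that the extremal family $\mathcal{A}$ is finite and nonempty so that its set of directions is well defined, and then correctly transfer the invariance from the affine level to the linear level where $C_1$ applies.
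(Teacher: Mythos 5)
Your proposal is correct. For the a.e.\ convergence of $R_n$, the moment bound $\mathbb{E}(\norm{R}^{\theta})<\infty$, and the absence of atoms you follow essentially the paper's own route: geometric decay of $\mathbb{E}\mnorm{M_1\cdots M_k}^{\theta}$ from $\kappa(\theta)<1$, subadditivity of $t\mapsto t^{\theta}$ for small $\theta$ and Minkowski for $\theta\in(1,\alpha)$, and the barycenter of the set of maximal atoms against $C_4$; your explicit index-reversal argument for the weak convergence of the law of $X_n^x$ merely fills in a step the paper delegates to ``known arguments''. The genuine difference is the proof that $\eta$ charges no proper affine subspace. Both proofs first build the same finite, nonempty, $\texttt{supp}\mu$-invariant family of extremal subspaces of minimal positive dimension $r$ (your additivity-over-null-intersections justification is exactly the paper's). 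But the paper then invokes $C_2$: since $L(\bar{\mu})<0$, some $h\in[\texttt{supp}\mu]$ has contracting linear part; a finite-index argument in the group generated by $\texttt{supp}\mu$ shows a power $h^p$ stabilizes every extremal $W$, so the attracting fixed point of $h$ lies in $\bigcap_{W\in\mathcal{W}_0}W$, and this nonempty invariant intersection contradicts $C_4$ (if of dimension $0$) or irreducibility (if of positive dimension). You avoid all of that: the directions $\vec{W}$ of the extremal subspaces form a finite set of proper nonzero linear subspaces permuted by the linear parts (since $\vec{hW}=g\vec{W}$ when $h$ has linear part $g$), so their union is a $[\texttt{supp}\bar{\mu}]$-invariant finite union of proper subspaces, contradicting strong irreducibility directly. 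Your route is shorter and does not use $C_2$ in this step at all, but it consumes the full ``finite union'' strength of condition \emph{i-p}, whereas the paper's argument only needs invariance of a single subspace at the very end, at the price of the contraction/fixed-point machinery; the passages from $\mu$-a.e.\ invariance to invariance under the closed semigroup are routine closedness remarks in both arguments.
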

 \begin{proof}
 The proofs of convergence are  based on known arguments (see \cite{BougerolPicard92,Kesten73Acta}), hence we give only a sketch  in our setting. If $s<\alpha$, we have by definition of $\kappa(s)$:
 \begin{equation*}
    \mathbb{E}(\mnorm{M_1\cdots M_k}^s)= \mathbb{E}(\mnorm{M_k\cdots M_1}^s) \leq C(\kappa(s)+\epsilon)^k
 \end{equation*}
  for some $C>0$, any integer $k>0$ and $0<\epsilon< \kappa(\alpha) -\kappa(s)$.
 Also $\mathbb{E}(\norm{Q_k}^s)= \mathbb{E}(\norm{Q_1}^s) \leq \mathbb{E}(\norm{Q_1}^\alpha)^{s/\alpha} <\infty $.
It follows if $m>n$,
$$\mathbb{E} (\norm{R_m-R_n }^s) \leq C (\mathbb{E}(\norm{Q_1}^{\alpha}))^{s/\alpha} \sum_{k=n}^{m-1} (\kappa(s)+ \epsilon)^k  <\infty.$$
Hence $\lim_{m,n\rightarrow \infty} \mathbb{E} (\norm{R_m-R_n}^s)=0 $. The  convergence  a.e. of  $R_n $ to $R$ follows.
The same calculation shows $\mathbb{E}(\norm{R}^{\theta})<\infty$ if $\alpha \leq 1$ and $\theta <\alpha$.
 If $\alpha >1$ and $\theta \in [1,\alpha[$, we use Minkowski inequality in $\mathbb{L}^{\theta}(\Omega)$ and the independence of $M_1\cdots M_{k-1}, Q_k$ to get that :
  \begin{equation*}
    \mathbb{E}(\norm{R}^{\theta}) \leq C\mathbb{E}(\norm{Q_1}^{\theta})\left[1+ \sum_{k=1}^{\infty} (\kappa(\theta)+\epsilon)^{k/\theta} \right]^{\theta}<\infty,
 \end{equation*}
 if $ \epsilon$ satisfies $\kappa(\theta)+\epsilon <1$.

 The fact that $\eta$ has no atom is proved as follows.

 Let $A\subset V$ be the set of atoms of $\eta$. Then $A$ is countable and $\sum_{x\in A} \eta(\{x\}) \leq 1$.
 It follows that, for every $\epsilon>0$, the set $\{ x\in A; \eta(x)\geq \epsilon\}$ is finite; in particular,  $\sup_{x\in A} \eta(\{x\})=c$ is attained. Let $ A_0= \{x\in A ; \eta(x)=c\}$. Since $\eta P=\eta$, we have $h A_0=A_0 $ if $h\in \texttt{supp}\mu.$
 Then the barycenter of $A_0$ is a \texttt{supp}$\mu$-invariant point, which is excluded by condition $C_4$.

  Assume now that there exists an affine subspace $W$ of positive dimension such that $\eta (W)>0$, and let $\mathcal{W}$ be the set of affine subspaces of minimum dimension $r$ with $\eta(W)>0$. If $r=0$, the contradiction follows from above. If $r>0$, we observe that for any $W, W' \in \mathcal{W}$ with $W\neq W'$, we have $ \eta(W\bigcap W') =0$  since $\dim(W\bigcap W') <\dim W$. Then as above $\sup_{W\in\mathcal{W}} \eta (W)=c'$ is attained. If $ \mathcal{W}_0= \{W\in \mathcal{W}: \eta(W)= c'\}$, we have $h\mathcal{W}_0=\mathcal{W}_0 $ for any $h\in \texttt{supp}\mu$. Let $\Gamma$ be the closed subgroup of $H$ generated by $\texttt{supp}\mu$, hence  $h\mathcal{W}_0=\mathcal{W}_0 $ for any $h\in \Gamma$.
Then the subset $\Gamma_0$ of $\Gamma$, which leaves invariant any $W\in \mathcal{W}_0$, is a finite index subgroup of $\Gamma$.
Since $L(\bar{\mu})<0$, $[\texttt{supp}\bar{\mu}]$ has an element $g$ with $\norm{g}<1$. Assume $h\in [\texttt{supp}{\mu}] $ has linear part $g$ and observe that $h$ has a unique fixed point $x\in V$ which is attracting. Since $\Gamma_0$ has finite index in $\Gamma$, we can find $p\in \mathbb{N}$ such that $h^p \in \Gamma_0$.
Then for any $y\in W$ with $W\in \mathcal{W}_0$, we have
$$\lim_{n\rightarrow \infty} h^{pn}y =x.$$
 Since $h^{pn}y \in W$, we get $x\in W$, hence $$x\in \bigcap_{W\in\mathcal{W}_0} W \neq \varnothing .$$
 It follows that $\Gamma$ leaves invariant the nontrivial affine subspace $ \bigcap_{W\in\mathcal{W}_0} W $. If $\dim \bigcap_{W\in\mathcal{W}_0} W =0,$
 we have constructed a point invariant under $\Gamma$, which contradicts conditions $C_4$. If $\dim \bigcap_{W\in\mathcal{W}_0} W >0,$ the direction of this affine subspace is a proper $\texttt{supp}\bar{\mu}$-invariant linear subspace, which contradicts condition \emph{i-p} for $\texttt{supp}\bar{\mu}$.
 \end{proof}
For $\kappa(s)$ we have the following proposition(see \cite{GuivarchLePage04simple}):
  \begin{pro}\label{rwgpro2-1}
  Assume $[\texttt{supp} \bar{\mu}]$ satisfies conditions i-p. Then $\log \kappa(s) $ is strictly convex on $[0,s_\infty[$.
  If $s_\infty=\infty$, we have:\begin{eqnarray*}
                                  \lim_{s\rightarrow \infty} \frac{\log \kappa (s)}{s} &=& \lim_{n\rightarrow \infty} \frac{1}{n}
                                   \sup \{ \log \mnorm{g}: g\in [\texttt{supp}\bar{\mu}]^n\} \\
                                   &=&   \lim_{n\rightarrow \infty} \frac{1}{n} \sup\{ \log r(g): g\in [\texttt{supp}\bar{\mu}]^n \}
                                \end{eqnarray*}
In particular, the condition $\kappa(s)<1$ on $]0,\infty[$ is equivalent to $r(g)\leq 1 $ on $[\texttt{supp} \bar{\mu}]$, and
if $\lim_{s\rightarrow s_\infty} \kappa (s) \geq1$  there exists a unique $\alpha \in ]0,s_\infty]$ such that $\kappa(\alpha)=1$.
\end{pro}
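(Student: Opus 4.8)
The plan is to treat the three assertions separately—strict convexity, the two limit formulas under $s_\infty=\infty$, and the ``in particular'' consequences—the convexity being soft and the limit formula requiring condition \emph{i-p} in an essential way.

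\textbf{Convexity.} For each fixed $n$ the map $s\mapsto\log\mathbb{E}\mnorm{M_n\cdots M_1}^s$ is convex by H\"older's inequality (log-convexity of moments), and $\log\kappa(s)$ is the pointwise limit of $\tfrac1n\log\mathbb{E}\mnorm{M_n\cdots M_1}^s$, hence convex. For \emph{strict} convexity one uses \emph{i-p}: the operators $P_s f(\bar x)=\int(\mnorm{gx}/\mnorm{x})^s f(\overline{gx})\,d\bar\mu(g)$ on H\"older functions over $\mathbb{P}^{d-1}$ have a spectral gap with leading eigenvalue $\kappa(s)$, depending analytically on $s$ (see \cite{GuivarchLePage04simple}); analytic perturbation theory then gives $\tfrac{d^2}{ds^2}\log\kappa(s)=\sigma_s^2\ge0$, the asymptotic variance of $\log\mnorm{M_n\cdots M_1\,x}$ under the $s$-tilted dynamics. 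Vanishing of $\sigma_s^2$ would force $\log\mnorm{gx}$ to be cohomologous to a constant on the support, incompatible with the density in $\mathbb{R}^*_+$ of the moduli of dominant eigenvalues of proximal elements; hence $\sigma_s^2>0$. This degeneracy-exclusion is the only delicate point of part one.

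\textbf{The limits when $s_\infty=\infty$.} Write $\phi_n(s)=\log\mathbb{E}\mnorm{M_n\cdots M_1}^s$. By independence and submultiplicativity $\phi_{m+n}(s)\le\phi_m(s)+\phi_n(s)$, so $\log\kappa(s)=\inf_n\tfrac1n\phi_n(s)$. Since $\log\kappa$ is convex with $\log\kappa(0)=0$, the quotient $\tfrac{\log\kappa(s)}{s}$ is nondecreasing and $L_\kappa:=\lim_{s\to\infty}\tfrac{\log\kappa(s)}{s}=\sup_{s>0}\tfrac{\log\kappa(s)}{s}$ exists. Because $\mnorm{M_n\cdots M_1}\le e^{a_n}$ a.s. (with $a_n=\sup\{\log\mnorm g:g\in[\texttt{supp}\bar\mu]^n\}$), one has $\phi_n(s)\le s\,a_n$, whence $\tfrac{\log\kappa(s)}{s}\le\tfrac{a_n}{n}$ for all $n,s$; letting $s\to\infty$ and taking the infimum gives $L_\kappa\le\lim_n\tfrac{a_n}{n}$ (the limit exists as $a_n$ is subadditive). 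For the reverse, fix a proximal $g$ in the support of $\bar\mu^{*n}$: by continuity of eigendata there is a neighbourhood $U$ of $g$ and a common cone on which every $h\in U$ maps into itself, expanding radially by at least $r(g)-\varepsilon$; hence on the event that each of $N$ successive length-$n$ blocks lands in $U$ (probability $\bar\mu^{*n}(U)^N>0$) we get $\mnorm{M_{Nn}\cdots M_1}\ge(r(g)-\varepsilon)^N$, so
\[
\mathbb{E}\mnorm{M_{Nn}\cdots M_1}^s\ \ge\ \bar\mu^{*n}(U)^N\,(r(g)-\varepsilon)^{Ns}.
\]
Taking $(\cdot)^{1/(Nn)}$, letting $N\to\infty$ (so the left side tends to $\kappa(s)$), then $\tfrac1s\log(\cdot)$ with $s\to\infty$, and finally $\varepsilon\to0$, yields $L_\kappa\ge\tfrac1n\log r(g)$.

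\textbf{Closing the chain.} Trivially $b_n\le a_n$ since $r(g)\le\mnorm g$, where $b_n=\sup\{\log r(g):g\in[\texttt{supp}\bar\mu]^n\}$. The essential use of \emph{i-p} is the converse ``norm-to-spectral-radius'' conversion: by strong irreducibility and proximality there exist $c>0$ and a bounded set $F\subset\bigcup_k[\texttt{supp}\bar\mu]^k$ of uniformly bounded length $p$ such that every $g$ can be sandwiched into a proximal $fgf'$ ($f,f'\in F$) with $r(fgf')\ge c\,\mnorm g$. Thus $b_{n+2p}\ge\log c+a_n$, giving $\lim_n\tfrac{b_n}{n}\ge\lim_n\tfrac{a_n}{n}$, hence $\lim\tfrac{b_n}{n}=\lim\tfrac{a_n}{n}$; moreover these proximal elements $fgf'$, fed into the previous paragraph, yield $L_\kappa\ge\lim_n\tfrac{b_n}{n}$. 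Combining with $L_\kappa\le\lim_n\tfrac{a_n}{n}$ shows all three limits coincide. This conversion lemma, which genuinely requires \emph{i-p}, is the main obstacle.

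\textbf{Consequences.} If $\kappa(s)<1$ on $(0,\infty)$ then $L_\kappa\le0$, so for $g\in[\texttt{supp}\bar\mu]^{n_0}$ we get $\tfrac1{n_0}\log r(g)=\lim_m\tfrac1{mn_0}\log r(g^m)\le L_\kappa\le0$, i.e. $r(g)\le1$. Conversely, $r(g)\le1$ on $[\texttt{supp}\bar\mu]$ gives $b_n\le0$, so $L_\kappa=\lim\tfrac{b_n}{n}\le0$ and $\log\kappa(s)\le s\,L_\kappa\le0$; a zero $\log\kappa(s_1)=0$ at some $s_1>0$ would, with $\log\kappa(0)=0$ and strict convexity, force $\log\kappa>0$ beyond $s_1$, a contradiction, so $\kappa(s)<1$ for all $s>0$. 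Finally, strict convexity shows $\log\kappa$ has at most two zeros; as $0$ is one, at most one $\alpha\in(0,s_\infty]$ satisfies $\kappa(\alpha)=1$. Existence follows from the intermediate value theorem: since $\kappa'(0_+)=L(\bar\mu)<0$ in our setting, $\log\kappa<0$ just beyond $0$, while $\lim_{s\to s_\infty}\kappa(s)\ge1$ forces $\log\kappa$ back up to $0$, producing the unique root $\alpha$.
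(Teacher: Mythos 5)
First, a point of reference: the paper contains no proof of this proposition at all --- it is quoted verbatim from \cite{GuivarchLePage04simple} --- so your attempt can only be measured against the standard arguments of that reference, and in outline you do follow them: H\"older's inequality for convexity, the spectral gap of the operators $\rho_s(\bar{\mu})$ plus exclusion of cohomological degeneracy (via the density in $\mathbb{R}^*_+$ of the moduli of dominant eigenvalues of proximal elements, cf. \cite{GuivarchRaugi84,GuivarchUrban05SM}) for \emph{strict} convexity, a positive-probability cone argument for the lower bound on $L_\kappa=\lim_s\log\kappa(s)/s$, and an Abels--Margulis--Soifer type conversion of norms into spectral radii. Your elementary steps are correct: $\log\kappa(s)=\inf_n\frac1n\log\mathbb{E}\mnorm{M_n\cdots M_1}^s$ does give $L_\kappa\le\lim_n a_n/n$; the cone argument correctly gives $L_\kappa\ge\frac1n\log r(g)$ for proximal $g$ in $[\texttt{supp}\bar{\mu}]^n$; and both ``in particular'' consequences follow from what you establish. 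You are also right, and this is a genuine subtlety, that the \emph{existence} of $\alpha$ cannot follow from \emph{i-p} and $\lim_{s\to s_\infty}\kappa(s)\ge 1$ alone (take a support consisting of matrices all of whose singular values are $\ge 2$: then $\kappa(s)>1$ for every $s>0$); importing $\kappa'(0_+)=L(\bar{\mu})<0$ from the standing condition $C_2$ is the correct reading of the statement.

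The genuine gap is the inequality $b_{n+2p}\ge\log c+a_n$, on which your claim that $\lim_n b_n/n$ exists rests. The sandwiching lemma produces a finite set $F$ whose elements have lengths \emph{at most} $p$, varying from element to element, and the choice of $f,f'$ depends on $g$; so what it actually yields is that for each $g\in[\texttt{supp}\bar{\mu}]^n$ there is \emph{some} length $m=m(g)\in[n,n+2p]$ and a proximal element of $[\texttt{supp}\bar{\mu}]^{m}$ with spectral radius at least $c\mnorm{g}$. This suffices for $\limsup_n b_n/n\ge\lim_n a_n/n$ and, fed into your cone argument, for $L_\kappa\ge\lim_n a_n/n$ --- hence for the first displayed equality and for both consequences --- but it does not produce, for \emph{every} large $N$, an element of $[\texttt{supp}\bar{\mu}]^N$ with $\log r\approx N L_\kappa$: you cannot prescribe the lengths of the aligning elements, and padding $fgf'$ by extra generators destroys all control of the spectral radius, so a priori $b_N/N$ could drop along a subsequence of lengths. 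Thus the existence of the third limit (rather than a $\limsup$) is not proved. Closing this requires a further idea: for instance, fix a near-optimal proximal $g_0\in[\texttt{supp}\bar{\mu}]^{m}$, and for each residue class $j$ modulo $m$ find a word $w_j$ of length $\equiv j \pmod m$ with $w_j\cdot \bar{x}_{g_0}$ outside the repelling hyperplane of $g_0$ --- this uses Zariski density of each length-residue class of the semigroup in (a coset of a finite-index normal subgroup of) $Zc([\texttt{supp}\bar{\mu}])$, together with strong irreducibility --- after which a ping-pong estimate gives $r(g_0^K w_j)\ge c_j\, r(g_0)^K$ and fills in all lengths. Apart from this step (and the fact that the two pillars, the spectral-gap/cohomology argument and the norm-to-spectral-radius lemma, are cited rather than proved, which is fair given that the paper itself cites the whole proposition), your proposal is sound.
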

\begin{remark}
Regularity properties of $\kappa(s)$, not used here, are proved in \cite{GuivarchLePage04simple}. In particular, $\kappa(s)$ is analytic on $[0,s_{\infty}[$.
\end{remark}

It is known (see \cite{GuivarchLePage04simple,GuivarchLePage10}) that since  $\bar{\mu}$ satisfies condition  \emph{i-p} and $\kappa(s)<\infty$, there exists a unique probability measure $\nu_s$ on $\mathbb{P}^{d-1}$ such that the $s$-homogeneous Radon measure $\nu_s\otimes \ell^s$ on $\mathbb{P}^{d-1}\times \mathbb{R}^*_+= (V\backslash \{0\})/ \{\pm Id\}$ satisfies
\begin{equation*}
(\nu_s\otimes \ell^s) \bar{P}= \kappa(s) \nu_s\otimes \ell^s,
\end{equation*}
where, by abuse of notation, $ \overline{P} $ is the Markov operator defined  by  $ \overline{\mu}$ on $(V\backslash \{0\})/ \{\pm Id\}$. If $\bar{x}\in \mathbb{P}^{d-1}$ corresponds to $x\in V$, we denote $\norm{g \bar{x}}= \frac{\norm{gx}}{\norm{x}}$ and we consider the operator $\rho_s(\bar{\mu})$ on $\mathbb{P}^{d-1} $ defined by
\begin{equation*}
    \rho_s(\bar{\mu})(\varphi)(\bar{x})= \int \varphi(g\cdot \bar{x})\norm{g\bar{x}}^s d\bar{\mu}(g),
\end{equation*}
where $\bar{x} \mapsto g\cdot \bar{x}$ is the projective map defined by $g\in G$. Then $\nu_s$ is the unique probability measure on $\mathbb{P}^{d-1}$ such  that $\rho_s(\bar{\mu})\nu_s= \kappa(s) \nu_s$.
 Furthermore, $\texttt{supp} \nu_s$ is equal to the limit set of $[\texttt{supp} \bar{\mu}]$ and $\nu_s$ gives zero measure to any projective subspace(see \cite{GuivarchLePage04simple,GuivarchLePage10}).  In the corresponding situation for the unit  sphere, either there exists a unique probability measure $\sigma_s$ on the unit  sphere which satisfies the above equation or there exist two such measures with disjoint supports which are extremal and symmetric to each other (see  \cite[Theorem 2.17]{GuivarchLePage10}), if $[\texttt{supp} \bar{\mu}]$ preserves a convex cone.  The following consequence  of the general renewal theorem of \cite{Kesten74Ann} and of the spectral gap property of the operator $\rho_s (\bar{\mu})$ is proved in  \cite[Theorem C]{GuivarchLePage10}  and plays an essential  role here.
\begin{theorem}\label{rwgTh2-3}
If $d>1$ and  condition $C$ holds, we have the following weak convergence:
\begin{equation*}
    \lim_{t\rightarrow 0_+} t^{-\alpha}(t.\eta)= c(\sigma_\alpha \otimes \ell^\alpha)= \Lambda,
\end{equation*}
where $c>0$, $\sigma_\alpha$ is a probability measure on $\mathbb{S}^{d-1}$ which has projection $\nu_\alpha$ on $\mathbb{P}^{d-1}$ and $\Lambda$ satisfies $t. \Lambda = t^\alpha \Lambda$  if $t>0$, $\Lambda \bar{P}= \Lambda$.
The above convergence is valid for any function $f$ with a $\Lambda$-negligible set of discontinuities and  such that for some $\varepsilon>0$
\begin{equation}\label{rwg2-1}
 \sup_{x\neq 0}\left( \norm{x}^{-\alpha} |\log \norm{x}|^{1+\varepsilon} |f(x)|\right)<\infty.
\end{equation}
In particular there exists $A>0$ such that for $k$ large enough,
\begin{equation}\label{rwg2-1-a}
\frac{1}{A} 2^{-k\alpha}\leq  \eta \{ x\in V ; \norm{x}\geq 2^k \}\leq A 2^{-k\alpha} .
\end{equation}
Also  $\Lambda(W)=0$ for any proper  affine subspace $W\subset V$.
\end{theorem}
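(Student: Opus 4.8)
The asserted convergence is Theorem~C of \cite{GuivarchLePage10}; I would establish it by reducing the vague convergence to a Markov renewal theorem on the projective space and then invoking \cite{Kesten74Ann}. The plan is first to pass to polar coordinates $V\setminus\{0\}\cong \mathbb{P}^{d-1}\times\mathbb{R}^*_+$, $x\mapsto(\bar x,\norm{x})$, and to set $r=\log\norm{x}$. In these coordinates the target measure reads $\Lambda=c\,(\sigma_\alpha\otimes\ell^\alpha)$, so the full statement reduces to showing that the angular marginal converges to $\sigma_\alpha$ while the radial marginal has the pure power density $r^{-\alpha-1}\,dr$. Since it suffices to test against product functions $\psi(\bar x)\mathbf{1}_{\{\norm{x}>e^t\}}$ with $\psi$ continuous (and their smooth analogues), everything comes down to the asymptotics, as $t\to+\infty$, of $e^{\alpha t}\,\eta\{\,x:\norm{x}>e^t,\ \bar x\in A\,\}$ for Borel $A\subset\mathbb{P}^{d-1}$.

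I would then introduce the Markov random walk $(\bar X_n,S_n)$ on $\mathbb{P}^{d-1}\times\mathbb{R}$ driven by $\bar\mu$, with additive cocycle $S_n=\log\norm{M_n\cdots M_1 x}-\log\norm{x}$; its twisted transfer operator is the operator $\rho_s(\bar\mu)$ introduced before the theorem, with leading eigenvalue $\kappa(s)$, eigenmeasure $\nu_s$, and a spectral gap under condition \emph{i-p}. Tilting by $e^{\alpha S_n}$ and using $\kappa(\alpha)=1$ turns $\rho_\alpha(\bar\mu)$ into a genuine Markov operator with stationary measure $\nu_\alpha$; by strict convexity of $\log\kappa$ (Proposition \ref{rwgpro2-1}) together with $L(\bar\mu)=\kappa'(0_+)<0$, the tilted walk has strictly positive drift $\kappa'(\alpha_-)=m_\alpha>0$. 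Expanding $R$ via the series of Proposition \ref{rwgpro2-2} and conditioning on the matrix products, the tail quantity above satisfies a Markov renewal equation, schematically $u=u\ast U+g$, in which $U$ is the renewal kernel of the positive-drift tilted walk and $g$ is a remainder; the moment hypothesis $C_3$ (moments of order $\alpha+\delta$ for $M$ and $Q$) is precisely what guarantees that $g$ is directly Riemann integrable, uniformly in the angular variable.

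Applying Kesten's Markov renewal theorem \cite{Kesten74Ann} then yields the convergence of $e^{\alpha t}\,\eta\{\norm{x}>e^t,\ \bar x\in A\}$ to a limit proportional to $\nu_\alpha(A)/m_\alpha$, which, lifted back to the sphere and reassembled over $t$, is exactly the announced limit $c\,(\sigma_\alpha\otimes\ell^\alpha)$; the spectral gap of $\rho_\alpha(\bar\mu)$ supplies the regularity that theorem requires, and the non-lattice (aperiodicity) hypothesis is furnished by condition \emph{i-p} together with $d\geq2$, through the fact that the closed subsemigroup of $\mathbb{R}^*_+$ generated by the moduli of the dominant eigenvalues of the proximal elements of $[\texttt{supp}\,\bar\mu]$ is all of $\mathbb{R}^*_+$. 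The two-sided bound \eqref{rwg2-1-a} is read off by taking $\psi\equiv1$ and $t=k\log2$, since $\Lambda\{\norm{x}>1\}=c\,\sigma_\alpha(\mathbb{S}^{d-1})/\alpha\in(0,\infty)$. The extension to all $f$ satisfying \eqref{rwg2-1} follows by dominated convergence: the logarithmic factor $|\log\norm{x}|^{1+\varepsilon}$ is exactly what renders the dominating function $\Lambda$-integrable at infinity (as $\int_2^\infty r^{-1}|\log r|^{-(1+\varepsilon)}\,dr<\infty$), so that \eqref{rwg2-1-a} yields the uniform integrability needed to pass to the limit, while the factor $\norm{x}^\alpha$ controls the contribution near the origin against the finite moments of $\eta$ of order $<\alpha$ (Proposition \ref{rwgpro2-2}). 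Finally $\Lambda(W)=0$ for a proper affine $W$ follows from the product form $\Lambda=c\,\sigma_\alpha\otimes\ell^\alpha$, the non-atomicity of $\ell^\alpha$, and the fact (recalled before the theorem) that $\nu_\alpha$ charges no projective subspace, combined with a homogeneity argument for affine subspaces off the origin.

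The main obstacle is twofold. First, one must verify the non-lattice condition that makes the renewal limit a genuine constant rather than an oscillating function of $t$; this is where strong irreducibility, proximality and $d\geq2$ are used decisively, via the density in $\mathbb{R}^*_+$ of the eigenvalue-moduli subsemigroup. Second, and this is the technical heart of \cite{GuivarchLePage10}, one must control the remainder $g$ uniformly over the angular variable $\bar x$ and establish its direct Riemann integrability; this is where the strict positivity of the drift $m_\alpha$, the spectral gap of $\rho_\alpha(\bar\mu)$, and the additional moment $\delta$ in $C_3$ all combine.
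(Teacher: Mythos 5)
Your outline is correct and follows essentially the same route as the paper: the paper does not prove Theorem \ref{rwgTh2-3} itself but imports it as Theorem C of \cite{GuivarchLePage10}, whose proof is exactly the reduction you describe, namely Kesten's Markov renewal theorem \cite{Kesten74Ann} applied to the $\alpha$-tilted walk on $\mathbb{P}^{d-1}\times\mathbb{R}$, with positive drift $m_\alpha=\kappa'(\alpha_-)>0$ and the spectral gap of $\rho_\alpha(\bar\mu)$ supplying the needed regularity. Your identification of the aperiodicity input (density in $\mathbb{R}^*_+$ of the subsemigroup of dominant-eigenvalue moduli, from condition \emph{i-p} and $d\geq 2$) and of the uniform-integrability argument behind the extension to functions satisfying \eqref{rwg2-1} matches the ingredients the paper attributes to that reference.
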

In the special case of the recurrence relation $$W_n=M_n^*(W_{n-1} +v)\quad  (n\geq 1),$$
the corresponding measure on $H$ is denoted by ${\mu}^*_v$.    
The corresponding transition operator on $V$ is denoted by $T_v$. Then we have the
\begin{pro}\label{rwgpro2-5}
Assume Condition $C$  holds true for $\mu$. Then Condition $C$ is satisfied by the measure ${\mu}^*_v$ on $H$, if $v\neq 0$.

 The sequence $$Z_n^*= \sum_{k=1}^{n} M_1^*\cdots M_k^*$$
converges $\mathbb{P}$-a.e. to $$Z^*=\sum_{k=1}^{\infty} M_1^*\cdots M_k^*$$ where $Z$ is defined by the   $\mathbb{P}$-a.e
convergent series $\sum_{k=1}^{\infty} M_k\cdots M_1$.

 The law $\eta_v$ of $Z^*v$ is the unique ${\mu}^*_v$-stationary measure and $\eta_v$ satisfies
 $$\int \norm{x}^\theta d\eta_v(x) <\infty\quad for \quad  \theta \in [0,\alpha[, \quad  \int \norm{x}^\alpha d\eta_v(x)=\infty.$$

 For any $t\in \mathbb{R}^* $, we have  $\eta_{tv}=t.\eta_v$. If $\alpha >1$,   for all $x\in V$ the map $v\rightarrow \eta_v(x^*)$  is a linear form.

 The Radon measure $$ \Delta_v= \lim_{t\rightarrow 0_+} t^{-\alpha} (t.\eta_v) $$
 is $\alpha$-homogeneous, satisfies $\Delta_{tv}=t^\alpha\Delta_v$ for $t>0$,  $ \Delta_v\overline{P}_*=\Delta_v $, $\Delta_{-v} $ is symmetric of $ \Delta_v$.

 The function $C_{\alpha}(v)$  satisfies for $v\neq 0$, $Re C_{\alpha}(v) <0$  and for $ t>0$,
$$ C_\alpha(tv)= t^\alpha C_\alpha(v) \quad  \mbox{  if  } \alpha \neq 1, \quad \mbox{ and } \quad C_1(tv) = tC_1(v) + i \ipro{v}{ \beta(t)}, $$
where $ \beta(t)= \int \left( \frac{tx}{1+\norm{tx}^2}-\frac{tx}{1+\norm{x}^2} \right)d \Lambda(x) $.
 \end{pro}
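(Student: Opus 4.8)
The plan is to obtain every assertion except the scaling identities by transporting Proposition \ref{rwgpro2-2} and Theorem \ref{rwgTh2-3} to the measure $\mu^*_v$ in place of $\mu$; the decisive first step is therefore to verify that $\mu^*_v$ satisfies condition $C$ whenever $v\neq 0$. The mechanism is that transposition preserves norms and eigenvalues and reverses products: $\mnorm{g^*}=\mnorm{g}$, $v(g^*)=v(g)$, and $(g_1\cdots g_n)^*=g_n^*\cdots g_1^*$, so that $[\texttt{supp}\,\bar\mu^*]=[\texttt{supp}\,\bar\mu]^*$. A subspace $W$ is $[\texttt{supp}\,\bar\mu]^*$-invariant iff $W^\perp$ is $[\texttt{supp}\,\bar\mu]$-invariant, a properness- and dimension-preserving bijection, so strong irreducibility passes to $\bar\mu^*$; since $g$ and $g^*$ share their eigenvalues, proximality passes too, giving $C_1$. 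As $\mnorm{M_n^*\cdots M_1^*}=\mnorm{M_1\cdots M_n}$ and the $M_i$ are i.i.d., the exponent associated with $\bar\mu^*$ is again $\kappa$, so $s_\infty$, $L(\bar\mu)$, $\lim_{s\to s_\infty}\kappa(s)$ and the critical exponent $\alpha$ are unchanged ($C_2$), and the translation part $M^*v$ of $\mu^*_v$ satisfies $\norm{M^*v}\le v(M)\norm v$, so $C_3$ is inherited from $C_3$ for $\mu$.

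The one condition that genuinely uses $v\neq0$ is $C_4$, and this is the first place real work is needed. I would argue as in the no-fixed-point part of Proposition \ref{rwgpro2-2}: a common fixed point $x_0$ of the maps $x\mapsto g^*(x+v)$ gives $g^*w=x_0$ for all $g\in\texttt{supp}\,\bar\mu$, where $w=x_0+v$, and invertibility of the $g^*$ forces $w\neq0$ and $x_0\neq0$. Fixing one generator $g_0$ yields $(gg_0^{-1})^*w=w$ for every $g$, so the proper subspace $w^\perp$ is invariant under the group generated by $\{gg_0^{-1}\}$; combined with strong irreducibility of $[\texttt{supp}\,\bar\mu]$ and its Zariski closure (via \cite{Pra94}) this leads to a proper $\texttt{supp}\,\bar\mu$-invariant subspace or fixed point, contradicting $C_1$.

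Granting condition $C$ for $\mu^*_v$, Proposition \ref{rwgpro2-2} applied to it gives a.e.\ convergence of its defining series, and a direct identification shows this series is $\sum_{j=1}^n M_1^*\cdots M_j^* v=Z_n^*v$, so $Z_n^*v\to Z^*v$ a.e.; the operator convergence $Z_n^*\to Z^*$ follows because $\tfrac1k\log\mnorm{M_1^*\cdots M_k^*}=\tfrac1k\log\mnorm{M_k\cdots M_1}\to L(\bar\mu)<0$ a.e., making $\sum_k\mnorm{M_1^*\cdots M_k^*}$ summable. Then $\eta_v=\mathrm{law}(Z^*v)$ is the unique $\mu^*_v$-stationary measure with $\int\norm x^\theta d\eta_v<\infty$ for $\theta<\alpha$ by the moment bound of Proposition \ref{rwgpro2-2}, while Theorem \ref{rwgTh2-3} applied to $\mu^*_v$ produces the homogeneity at infinity with $\Delta_v\neq0$ together with a two-sided tail estimate of type \eqref{rwg2-1-a}, whose lower bound forces $\int\norm x^\alpha d\eta_v=\infty$. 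Linearity of $Z^*$ gives $Z^*(tv)=t\,Z^*v$ and hence $\eta_{tv}=t.\eta_v$ for $t\in\mathbb R^*$; and for $\alpha>1$ the first moment is finite and $\kappa(1)<1$ (since $\kappa(0)=\kappa(\alpha)=1$ and $\log\kappa$ is strictly convex), so $\eta_v(x^*)=\mathbb E\ipro{x}{Z^*v}=\ipro{x}{(I-z^*)^{-1}z^*v}$ using $\mathbb E(Z^*)=\sum_{k\ge1}(z^*)^k$, which is linear in $v$. The stated properties of $\Delta_v$ (existence, $\alpha$-homogeneity, $\Delta_v\overline P_*=\Delta_v$) come from Theorem \ref{rwgTh2-3} for $\mu^*_v$; combining $\eta_{tv}=t.\eta_v$ with the definition of $\Delta_v$ as a limit yields $\Delta_{tv}=t^\alpha\Delta_v$ for $t>0$, and $t=-1$ shows $\Delta_{-v}$ is the image of $\Delta_v$ under $x\mapsto -x$.

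It remains to treat $C_\alpha$. For $\alpha\neq1$ one has $C_\alpha(v)=\alpha m_\alpha\Delta_v(\widetilde\Lambda^1)$, so the identity $\Delta_{tv}=t^\alpha\Delta_v$ of measures gives $C_\alpha(tv)=t^\alpha C_\alpha(v)$ immediately. The case $\alpha=1$ is the second point requiring genuine computation: writing $C_1(v)=m_1\Delta_v(\widetilde\Lambda^1)+i\gamma(v)$, the first term scales by $t$, and I would compute $\gamma(tv)-t\gamma(v)$ directly from \eqref{rwg1-6}, changing variables through $\eta_{tv}=t.\eta_v$ and the homogeneity of $\Lambda$; the non-homogeneous truncations $\ipro{\cdot}{x}/(1+\abs{\ipro{\cdot}{x}}^2)$ produce exactly the correction $i\ipro{v}{\beta(t)}$ with $\beta$ as stated. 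Finally $\operatorname{Re}C_\alpha(v)<0$ for $v\neq0$ follows from $m_\alpha=\kappa'(\alpha_-)>0$, positivity of $\Delta_v$, the fact that $\operatorname{Re}\widetilde\Lambda<0$ away from $0$, and that $\Delta_v$ charges $\{\norm y\ge1\}$ while $\Lambda$ charges no hyperplane. The two genuine obstacles are thus the verification of $C_4$ for $\mu^*_v$ and the bookkeeping of the $\alpha=1$ correction.
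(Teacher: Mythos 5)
Your proposal is correct and follows essentially the same route as the paper's proof: condition $C$ is transported to $\mu^*_v$ via transposition (including the same fixed-point argument for $C_4$, in mirror form), Proposition \ref{rwgpro2-2} and Theorem \ref{rwgTh2-3} are then applied to $\mu^*_v$ to get the series convergence, moments and $\Delta_v$, and the scaling identities are read off from $\eta_{tv}=t.\eta_v$ together with the homogeneity of $\Lambda$. The only deviations are expository: you justify $Z_n^*\to Z^*$ by the a.s.\ Lyapunov exponent instead of the $\kappa(\theta)<1$ moment argument, and you write out explicitly the computation $\gamma(tv)=t\gamma(v)+\ipro{v}{\beta(t)}$ for $\alpha=1$ and the formula $\eta_v(x^*)=\ipro{x}{(I-z^*)^{-1}z^*v}$, details which the paper leaves implicit in its final sentence.
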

\begin{proof}
We observe that $\mnorm{M^*}=\mnorm{M}$, hence $$\lim_{n\rightarrow \infty} (\mathbb{E} (\mnorm{M_n^*\cdots M_1^*}^s))^{1/n}= \lim_{n\rightarrow \infty}
(\mathbb{E} (\mnorm{M_1\cdots M_n}^s))^{1/n}= \kappa (s).$$
One verifies easily that condition  \emph{i-p} for $[\texttt{supp} \bar{\mu}]$, which is valid,  remains valid for $ [\texttt{supp} \bar{\mu}]^*=[\texttt{supp} \bar{\mu}^*]$.
If $\texttt{supp} \bar{\mu}^*$ had a fixed point $x\in V$, then $g^* (x+v)=x$ for any $g\in \texttt{supp}\bar{\mu} $.
Since $v$ is non zero, we have  $x\neq 0$. Also this implies $g_1^*(g_2^*)^{-1}x=x$ for any $g_1,g_2 \in \texttt{supp} \bar{\mu}$, hence $x$ is invariant under the subgroup generated by $\texttt{supp} \bar{\mu}$. This contradicts irreducibility of  $ [\texttt{supp} \bar{\mu}]$.

 As in the proof of proposition \ref{rwgpro2-2}, one sees that the condition
$$\lim_{n\rightarrow\infty} (\mathbb{E} (\mnorm{M_1^*\cdots M_n^*}^\theta))^{1/n}=\kappa(\theta)<1$$
 for $\theta <\alpha$ implies the convergence  $$\lim_{n\rightarrow \infty}  \sum_{k=1}^{n}M_1^*\cdots M_k^* =\sum_{k=1}^{\infty}M_1^*\cdots M_k^*=Z^* .$$
 Since the map $g\rightarrow g^*$ is continuous, this gives the convergence of $Z_n=\sum_{k=1}^{n} M_k\cdots M_1$ to $Z=\sum_{k=1}^{\infty} M_k\cdots M_1$.

The second assertion on $\eta_v$ follows from inequality \eqref{rwg2-1-a} of Theorem \ref{rwgTh2-3} applied to $\mu^*_{v}$, since  Proposition \ref{rwgpro2-5} implies that  Condition C for $\mu$ and $\mu^*_v$ are equivalent.

The third assertion on linearity of $\eta_v$ with respect to $v$ follows from the relations $$Z^*(tv)= t Z^*(v), Z^*(v+w)=Z^*(v)+Z^*(w).$$

The last assertions  follow  from Theorem \ref{rwgTh2-3}, the relation $ \eta_{tv}=t.\eta_v$ for $t\in \mathbb{R}^*$ and the definition of $C_{\alpha}(v)$.
\end{proof}

We recall that the characteristic function $ \widehat{\eta_v} $ of the measure $\eta_v$  is  defined by $ \widehat{\eta_v}(x)= \eta_v(\mathcal{X}_x)$  and $w^*=\langle w,\cdot \rangle.$  In the proof of Theorem \ref{rwgTh1-3}, we shall need the following formula for the quantity ${C}_{\alpha}(v)$. We denote by $\widehat{C}_{\alpha}(v)$ the following quantity:
%
  \begin{equation}\label{rwg2-2}
     \widehat{C}_\alpha(v)= \left\{\displaystyle
                    \begin{array}{ll}\displaystyle
                      \int (\mathcal{X}_{v}(x)-1 ) \widehat{\eta_v}(x) d \Lambda (x) , &  \mbox{ if } 0<\alpha< 1; \\
                    \displaystyle \int\left((\mathcal{X}_v(x)-1)\widehat{\eta_v}(x)-i\frac{\ipro{v}{x}}{{1+\norm{x}^2}}\right) d \Lambda (x)  , &  \mbox{ if } \alpha= 1;  \\
                     \displaystyle \int \left( (\mathcal{X}_v(x)-1)\widehat{\eta_v}(x)- i\ipro{v}{x}  \right) d \Lambda (x), & \mbox{ if } 1<\alpha< 2; \\
                    \displaystyle -\frac{1}{4} \int (\ipro{v}{w}^2 +2\ipro{v}{w}\eta_v(w^*)) d \sigma_2(w)  , &  \mbox{ if }  \alpha= 2
                    \end{array}
                  \right.
  \end{equation}

\begin{pro}\label{rwgpro4-2}
The  formula   $\widehat{C}_{\alpha}(v)$ $=$ $C_\alpha(v)$  with the  definition \eqref{rwg1-5} is valid.
\end{pro}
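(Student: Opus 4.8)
My plan is to reduce both definitions of $C_\alpha(v)$ to the single quantity $\int(\widetilde{\Lambda}(y+v)-\widetilde{\Lambda}(y))\,d\eta_v(y)$ and then to evaluate that integral by a renewal argument resting on the homogeneity at infinity of $\eta_v$.

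First I would unfold the definition \eqref{rwg2-2} of $\widehat{C}_\alpha(v)$. Writing $\widehat{\eta_v}(x)=\int\mathcal{X}_x(y)\,d\eta_v(y)$ and using $\mathcal{X}_v(x)\mathcal{X}_y(x)=\mathcal{X}_{v+y}(x)$, an application of Fubini turns the $\Lambda$-integral into
\[
\widehat{C}_\alpha(v)=\int\bigl(\widetilde{\Lambda}(y+v)-\widetilde{\Lambda}(y)\bigr)\,d\eta_v(y)\qquad (0<\alpha<1,\ 1<\alpha<2).
\]
For $0<\alpha<1$ this is immediate, since $|\mathcal{X}_{v+y}(x)-\mathcal{X}_y(x)|=|\mathcal{X}_v(x)-1|$ is independent of $y$ and $\Lambda$-integrable. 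For $1<\alpha<2$ the linear compensator built into $\widetilde{\Lambda}$ matches the term $-i\ipro{v}{x}$ of \eqref{rwg2-2}, and Fubini is legitimate once one bounds $|\widetilde{\Lambda}(y+v)-\widetilde{\Lambda}(y)|\le C(1+\norm{y}^{\alpha-1})$ and invokes $\int\norm{y}^{\alpha-1}\,d\eta_v(y)<\infty$ from Proposition \ref{rwgpro2-5}. For $\alpha=2$, expanding the quadratic $\widetilde{\Lambda}$ and using the linearity of $v\mapsto\eta_v(w^*)$ (Proposition \ref{rwgpro2-5}) gives the same reduction, with right-hand side exactly the expression in \eqref{rwg2-2}. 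For $\alpha=1$ the compensator $\frac{\ipro{v}{x}}{1+\norm{x}^2}$ in \eqref{rwg2-2} differs from the $w$-dependent one inside $\widetilde{\Lambda}$; carrying out Fubini and collecting the mismatched compensators produces precisely the extra summand $i\gamma(v)$ with $\gamma$ as in \eqref{rwg1-6}, so that $\widehat{C}_1(v)=\int(\widetilde{\Lambda}(y+v)-\widetilde{\Lambda}(y))\,d\eta_v(y)+i\gamma(v)$.

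The remaining and main task is the renewal identity
\[
\int\bigl(\widetilde{\Lambda}(y+v)-\widetilde{\Lambda}(y)\bigr)\,d\eta_v(y)=\alpha\,m_\alpha\,\Delta_v(\widetilde{\Lambda}^1)
\]
(with $m_1$ in place of $\alpha m_\alpha$ when $\alpha=1$). Here I would exploit that $\widetilde{\Lambda}$ is $\alpha$-homogeneous and $\overline{P}_*$-harmonic ($\overline{P}_*\widetilde{\Lambda}=\widetilde{\Lambda}$), and that $\eta_v$ is $T_v$-stationary ($\eta_v T_v=\eta_v$). Truncating to $\widetilde{\Lambda}_R=\widetilde{\Lambda}\,\mathbf{1}_{\{\norm{\cdot}\le R\}}$ and combining these two facts yields, for every $R$, an exact identity expressing $\int(\widetilde{\Lambda}(\cdot+v)-\widetilde{\Lambda})\,d\eta_v$ as a sum of two flux terms concentrated near $\{\norm{y}=R\}$: one coming from the shift by $v$ and one from the harmonicity defect $\overline{P}_*\widetilde{\Lambda}^{>R}-\widetilde{\Lambda}^{>R}$ of the tail part $\widetilde{\Lambda}^{>R}=\widetilde{\Lambda}\,\mathbf{1}_{\{\norm{\cdot}>R\}}$. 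Letting $R\to\infty$, each flux is evaluated through the weak convergence $\Delta_v=\lim_{t\to 0_+}t^{-\alpha}(t.\eta_v)$ of Proposition \ref{rwgpro2-5} and Theorem \ref{rwgTh2-3}, the matrix-action term contributing the derivative $m_\alpha=\kappa'(\alpha_-)$ as the drift of the $\alpha$-tilted walk $\log\norm{M_1^*\cdots M_n^*}$. Reading $\Delta_v=\zeta_v\otimes\ell^\alpha$ in polar coordinates gives $\Delta_v(\widetilde{\Lambda}^1)=\frac1\alpha\int_{\mathbb{S}^{d-1}}\widetilde{\Lambda}\,d\zeta_v$, consistent with the prefactor $\alpha m_\alpha$.

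I expect the flux evaluation to be the main obstacle. The difficulty is genuine: $\widetilde{\Lambda}$ is not $\eta_v$-integrable (its $\alpha$-moment against $\eta_v$ is infinite by Proposition \ref{rwgpro2-5}), so the naive telescoping $\eta_v(\widetilde{\Lambda}(\cdot+v))=\eta_v(\widetilde{\Lambda})$ that would give $0$ is an indeterminate $\infty-\infty$ and must be regularized by truncation; the finite nonzero value emerges entirely from the behaviour at infinity, and isolating the constant $\alpha m_\alpha$ requires the renewal theorem of \cite{Kesten74Ann} underlying the homogeneity at infinity. The secondary delicate point is the $\alpha=1$ bookkeeping that identifies the compensator mismatch with the double integral $\gamma(v)$ of \eqref{rwg1-6} and justifies its absolute convergence.
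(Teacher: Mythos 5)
Your first half (the Fubini reduction of $\widehat{C}_\alpha(v)$ to $\int(\widetilde{\Lambda}(y+v)-\widetilde{\Lambda}(y))\,d\eta_v(y)$, with the compensator mismatch producing $i\gamma(v)$ when $\alpha=1$) coincides with the paper's first step and is fine, and your truncation identity is also essentially correct: combining $T_v$-stationarity of $\eta_v$ with $\overline{P}_*\widetilde{\Lambda}=\widetilde{\Lambda}$ on the truncated function does yield two flux terms, the shift flux tends to $0$, and after scaling the harmonicity-defect flux converges to $\Delta_v\bigl(\overline{P}_*\widetilde{\Lambda}^{>1}-\widetilde{\Lambda}^{>1}\bigr)$ with $\widetilde{\Lambda}^{>1}=\widetilde{\Lambda}\,\mathbf{1}_{\{\norm{\cdot}>1\}}$. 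But the whole proposition then hangs on the single equality
\begin{equation*}
\Delta_v\bigl(\overline{P}_*\widetilde{\Lambda}^{>1}-\widetilde{\Lambda}^{>1}\bigr)=\alpha\, m_\alpha\,\Delta_v(\widetilde{\Lambda}^1),
\end{equation*}
and this — the only place where $m_\alpha=\kappa'(\alpha_-)$ can enter — is asserted, not proved. Saying that ``the matrix-action term contributes the drift of the $\alpha$-tilted walk'' is not a derivation: Theorem \ref{rwgTh2-3} and Proposition \ref{rwgpro2-5} give the existence of $\Delta_v$ and vague convergence against suitable test functions, but say nothing about this flux. Concretely, writing $\Delta_v=\zeta_v\otimes\ell^\alpha$ in polar coordinates, the flux splits into two logarithmically divergent radial integrals whose cancellation (via $\overline{P}_*\widetilde{\Lambda}=\widetilde{\Lambda}$ and the stationarity of $\zeta_v$) leaves $\zeta_v\bigl(\left.\frac{d}{ds}\right|_{s=\alpha}\rho_s(\bar{\mu}^*)\widetilde{\Lambda}\bigr)$, i.e.\ the derivative of the twisted operators applied to $\widetilde{\Lambda}$; identifying this with $\kappa'(\alpha)\,\zeta_v(\widetilde{\Lambda})$ is a perturbation-theoretic statement which requires embedding $\widetilde{\Lambda}$ into a family of $\kappa(s)$-eigenfunctions depending regularly on $s$. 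Invoking the renewal theorem of \cite{Kesten74Ann} does not fill this hole: that theorem underlies the existence of $\Delta_v$, and using it for your flux identity would amount to redoing the entire renewal analysis with $\widetilde{\Lambda}$ as test function, which is nowhere sketched.

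The paper closes exactly this gap by regularizing in the parameter $s$ instead of in the space variable. Using Theorem 2.17 of \cite{GuivarchLePage10} it constructs the subcritical family $\Lambda_s=c\,\sigma_s\otimes\ell^s$, $s<\alpha$, with $\Lambda_s\bar{P}=\kappa(s)\Lambda_s$ and $\widetilde{\Lambda}_s\to\widetilde{\Lambda}$; then the same two ingredients you use (stationarity, i.e.\ $Z_0^*v=M_0^*(Z^*v+v)$ has law $\eta_v$, and the eigenvalue property $\overline{P}_*\widetilde{\Lambda}_s=\kappa(s)\widetilde{\Lambda}_s$) give the exact, absolutely convergent identity $\widehat{C}_s(v)=\bigl(\frac{1}{\kappa(s)}-1\bigr)\mathbb{E}\bigl(\widetilde{\Lambda}_s(Z^*v)\bigr)$ — no $\infty-\infty$ occurs because $\kappa(s)\neq1$. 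The constant $m_\alpha$ then appears for free as $\lim_{s\to\alpha_-}\frac{1-\kappa(s)}{\alpha-s}$, and the remaining limit $\lim_{s\to\alpha_-}(\alpha-s)\,\mathbb{E}\bigl(\widetilde{\Lambda}_s(Z^*v)\bigr)=\alpha\,\Delta_v(\widetilde{\Lambda}^1)$ is an Abelian argument with a cutoff $\rho(s)$, using the tail asymptotics of $\eta_v$ uniformly in $s$. So the missing idea in your proposal is precisely this subcritical family $\widetilde{\Lambda}_s$ (or an equivalent mechanism replacing it); without it, your truncation scheme reorganizes the problem correctly but never actually produces the factor $\alpha m_\alpha$.
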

\begin{proof}
We start as in the proof of Proposition 5.19 in \cite{BuraczewskiGuivarch10PTRF}.
By definition of $\widetilde{\Lambda}$, we have
\begin{equation*}
   \widehat{C}_\alpha(v)=\left\{
                  \begin{array}{ll}\displaystyle
                   \int\left( \widetilde{\Lambda}(y+v)-\widetilde{\Lambda}(y)\right)d\eta_v(y)  , & \mbox{ if } \quad \alpha\in(0,1)\bigcup (1,2]  ;\\
               \displaystyle  \int\left( \widetilde{\Lambda}(y+v)-\widetilde{\Lambda}(y)\right)d\eta_v(y)      + i \gamma (v), &\mbox{ if } \quad \alpha=1,
                  \end{array}
                \right.
\end{equation*}
where $\gamma(v)$ is given by  \eqref{rwg1-6}.
We follow the argument in \cite{BuraczewskiGuivarch10PTRF}, but we use in an essential way the information of \cite{GuivarchLePage10}(See Theorems 2.6, 2.17), and in particular Theorem \ref{rwgTh2-3} above.

We define for $s<\alpha $ the Radon measure $\Lambda_s$ by $$\Lambda_s=c \sigma_s\otimes \ell^s,$$ where  $c$ is given by Theorem \ref{rwgTh2-3} and $\sigma_s$ is a probability measure on $\mathbb{S}^{d-1}$, depending continuously on $s$ in weak topology, such that $$\Lambda_s\bar{P}= \kappa(s) \Lambda_s, \quad \mbox{and} \quad \lim_{s\rightarrow \alpha_-} \sigma_s= \sigma_\alpha,$$
and $ \sigma_\alpha$ given by Theorem \ref{rwgTh2-3}.
 The existence and continuity of $\sigma_s$ for $s<\alpha$ follow from  the discussion of stationary  measures given before Theorem \ref{rwgTh2-3}, which is based on  (\cite{GuivarchLePage10}, Theorem 2.17). Hence we have the weak convergence:
$$ \lim_{s\rightarrow \alpha_-} \Lambda_s= \Lambda_{\alpha}=\Lambda.$$
We define also $\widetilde{\Lambda}_s $ for $ s< \alpha$, $s\neq 1$,
\begin{eqnarray*}
   && \widetilde{\Lambda}_s(y)= \int \left(\mathcal{X}_y(x)-1\right)d \Lambda_s(x), \quad \mathrm{if} \quad 0<s<1,\\
   &&  \widetilde{\Lambda}_s(y)= \int\left(\mathcal{X}_y(x)-1-i\ipro{x}{y}\right) d \Lambda_s(x), \quad \mathrm{if} \quad 1<s<2,
\end{eqnarray*}
Then $\widetilde{\Lambda}_s$ depends  continuously on $(s,y)$ in $[0,\alpha] \times V\backslash \{0\}$ and  $\widetilde{\Lambda}_s$ satisfies:
\begin{eqnarray*}
  && \overline{P}_* \widetilde{\Lambda}_s(x) = \int \widetilde{\Lambda}_s (g^*x) d\bar{\mu}(g) =\kappa(s)\widetilde{\Lambda}_s(x), \mbox{ and }    \widetilde{\Lambda}_s (tx)= t^s   \widetilde{\Lambda}_s (x), \mbox{ for } t>0.
\end{eqnarray*}
  For $s<\alpha$, we define
  $$ \widehat{C}_s(v) = \int (\widetilde{\Lambda}_s(y+v) -\widetilde{\Lambda}_s(y) ) d\eta_v(y) $$
and we observe that by dominated convergence,
$$\lim_{s\rightarrow \alpha_-} \widehat{C}_s(v)= \int (\widetilde{\Lambda}(y+v)-\widetilde{\Lambda}(y) ) d \eta_v(y). $$
Hence $ \lim_{s\rightarrow \alpha_-} \widehat{C}_s(v)=\widehat{C}_\alpha(v)  $ if $ \alpha \neq 1$, while   $ \lim_{s\rightarrow \alpha_-} \widehat{C}_s(v)=\widehat{C}_\alpha(v) - i \gamma(v) $ if $ \alpha =1$.
On the other hand, $Z^*_0v=\sum_{k=0}^{\infty}M_0^*\cdots M_k^* v$ satisfies
 $Z_0^* v= M_0^* (Z^*v+ v)$, where $$Z^*=\sum_{k=1}^{\infty} M_1^*\cdots M_k^*$$ and $M_0^*$ is a copy of $M^*$ independent of $Z$.
 It follows:
 \begin{eqnarray*}
   \mathbb{E}(\widetilde{\Lambda}_s (Z^*_0v)) &=& \mathbb{E} \left[\int \widetilde{\Lambda}_s (g^*(Z^*v+v)) d\bar{\mu} (g)\right]  \\
    &=& \kappa(s) \mathbb{E}(\widetilde{\Lambda}_s (Z^*v+v) ),
 \end{eqnarray*}
 hence $$\widehat{C}_s(v)= \mathbb{E} (\widetilde{\Lambda}_s(Z^*v+v) ) -\mathbb{E}(\widetilde{\Lambda}_s (Z^*v) ) = \left(\frac{1}{\kappa(s)}-1\right) \mathbb{E}(\widetilde{\Lambda}_s (Z^*v) ).$$
 By Proposition \ref{rwgpro2-1}, the function  $\log\kappa(s)$  is  convex, hence $\kappa(s)$ has a left derivative $\kappa'(\alpha_-)$ at $s=\alpha$:
  $$m_\alpha=\lim_{s\rightarrow \alpha_{-} } \frac{1-\kappa(s)}{\alpha-s}.$$
    In order to get the value of $\widehat{C}_\alpha(v)$, we need to evaluate $ \lim_{s\rightarrow \alpha_-} (\alpha-s) \mathbb{E}(\widetilde{\Lambda}_s (Z^*v ) ).$

 For this purpose we will use Theorem \ref{rwgTh2-3}, we  write
 $$F_{s,v}(t)= \int_{\norm{x}\geq t} \widetilde{\Lambda}_s(\bar{x}) d\eta_v(x)$$
  and we  observe that $|F_{s,v}(t)|\leq \displaystyle\sup_{\bar{x} \in \mathbb{S}^{d-1}} |\widetilde{\Lambda}_s(\bar{x})|$ is bounded  by $K<+\infty$ on $[0,\alpha]$ by definition of $\widetilde{\Lambda}_s$. Also for $t\geq 0$:
 $$t^{\alpha} F_{s,v}(t) = \int_{\norm{x}\geq 1} \widetilde{\Lambda}_s(\bar{x} ) d\eta_v^t(x)$$
 with $\eta_v^t=t^{\alpha} (t^{-1}. \eta_v)$. Hence, using the convergence of $\eta_v^t$ to $\Delta_v$ for $t\rightarrow +\infty$ given by Theorem \ref{rwgTh2-3} and the fact that $\widetilde{\Lambda}^1$ is bounded   by $K<+\infty$ with $\Delta_v$-negligible discontinuities, we get  for $t$ large,
 $$t^{\alpha } F_{s,v}(t)= \Delta_v (\widetilde{\Lambda}_s^1 )+ c_s(t),$$
where $\widetilde{\Lambda}_s^1(x)= \widetilde{\Lambda}_s (\bar{x}) \mathbf{1}_{[1,\infty)} (\norm{x})$ and  $ c_s(t)=o(1) \mbox{ as } t\rightarrow +\infty$  uniformly in $s\in [0,\alpha]$. We note that uniformity of $o(1)$ is valid since the function $\widetilde{\Lambda}_s(\bar{x})$ is continuous and bounded on $[0,\alpha]\times \mathbb{S}^{d-1}$, hence $\widetilde{\Lambda}_s^1(x)$ is bounded by the $\Delta_v$-integrable function $K\mathbf{1}_{[1,\infty)} (\norm{x})$.
By definition of $F_{s,v}$:
\begin{eqnarray*}
   \mathbb{E}(\widetilde{\Lambda}_s (Z^*v))&=&\int \norm{y}^s \widetilde{\Lambda}_s(\bar{y}) d\eta_v(y)= \int_V \left(\int_{0<t\leq\norm{y}} st^{s-1} dt \right) \widetilde{\Lambda}_s(\bar{y}) d\eta_v(y)\\
   &=&\int_0^\infty sF_{s,v} (t) t^{s-1} dt .
\end{eqnarray*}
Let $\rho$ be a positive increasing function on $[0,\alpha)$ such that
\begin{equation*}
  \lim_{s\rightarrow \alpha_-} \rho(s)=+\infty,\quad  \lim_{s\rightarrow \alpha_-} (\alpha-s)\rho^s(s) =0,\quad  \lim_{s\rightarrow \alpha_-} \rho^{s-\alpha}(s)=1.
\end{equation*}
 One can take for example $\rho(s)=(\alpha-s)^{-\frac{1}{2\alpha}}.$
 Then to compute the required limit, we decompose the integral of $F_{s,v}(t)$ according to the function $\rho(s)$ and use the asymptotic
 expansions of $F_{s,v}(t)$:
 \begin{eqnarray*}
  \lefteqn{(\alpha - s)\mathbb{E}(\widetilde{\Lambda}_s(Z^* v)) = (\alpha-s) \int_0^{\rho(s)} sF_{s,v}(t)t^{s-1} dt} \\
  &&   { } + (\alpha - s) \int_{\rho(s)}^{\infty}s\Delta_v(\widetilde{\Lambda}^1_s) t^{-\alpha+s-1} dt  + (\alpha-s) \int_{\rho(s)}^{\infty} c_s(t) t^{-\alpha+s-1} d t .
 \end{eqnarray*}

 Notice that the limits of the first and third terms are zero. Indeed, by the properties of $\rho(s)$:
 \begin{equation*}
 \lim_{s\rightarrow \alpha_-} \left|(\alpha-s)\int_{0}^{\rho(s)} sF_{s,v}(t)t^{s-1} dt\right| \leq \lim_{s\rightarrow \alpha_-}(\alpha-s)\rho^s(s)\sup_{t>0} |F_{s,v}(t)|=0.
\end{equation*}
To compute the limit of the third term, let $\epsilon>0$ and observe that, using the above remark, there exists $s_0=s_0(\epsilon)<\alpha$ close to $\alpha$ such that
$|c_s(t)|<\epsilon$ for $t>\rho(s_0)$, hence  using again the properties of $\rho(s)$:
$$\lim_{s\rightarrow\alpha_-} \left| (\alpha-s)\int_{\rho(s)}^\infty c_s(t) t^{-\alpha +s-1} dt \right| \leq  \epsilon \lim_{s\rightarrow\alpha_-}\rho^{s-\alpha}(s)=\epsilon.$$
Since $\epsilon$ was arbitrary, we obtain that the limit above is in fact zero.
As a result, using again the properties of $\rho(s)$,
\begin{eqnarray*}
   \lim_{s\rightarrow \alpha_-} \widehat{C}_s(v)&=& \lim_{s\rightarrow \alpha_-} \left( \frac{1}{\kappa(s)}-1\right)\mathbb{E} (\widetilde{\Lambda}_s(Z^*v))  \\
   &=&m_\alpha\lim_{s\rightarrow \alpha_-}(\alpha-s)\int_{\rho(s)}^\infty s\Delta_v(\widetilde{\Lambda}^1_s) t^{-\alpha+s-1} dt   \\
   &=& m_\alpha \lim_{s\rightarrow  \alpha
   _-}s \Delta_v(\widetilde{\Lambda}^1_s)\lim_{s\rightarrow\alpha_-}\rho^{s-\alpha}(s)=\alpha m_{\alpha} \Delta_v (\widetilde{\Lambda}^1),
\end{eqnarray*}
since,  as above, $ \lim_{s\rightarrow \alpha_-}\widetilde{\Lambda}_s^1 =\widetilde{\Lambda}^1$ and $\widetilde{\Lambda}^1_s$ is uniformly bounded by a $\Delta_v$-integrable function.
 The statement  follows.
 \end{proof}

\section{Spectral gap properties of Fourier operators, eigenfunctions and eigenvalues}
We follow closely the method of \cite{GuivarchLePage08spectral,BuraczewskiGuivarch10PTRF} and we recall the corresponding functional space notations.

On continuous  functions on $V$ we introduce the semi-norm
$$[f]_{\varepsilon,\lambda}=\sup_{x\neq y} \frac{|f(x)-f(y)|}{\norm{x-y}^\varepsilon (1+\norm{x})^{\lambda} (1+\norm{y})^{\lambda}}$$
and the two norms
$$|f|_{\theta}=\sup_{x} \frac{|f(x)|}{(1+\norm{x})^{\theta}},\quad  ||f||_{\theta,\varepsilon,\lambda}=|f|_\theta+[f]_{\varepsilon,\lambda}.$$
Notice that the conditions $\lambda+\varepsilon\leq \theta $ (always assumed) and $[f]_{\varepsilon,\lambda} <\infty$ imply  $|f|_\theta <\infty$.
Define the Banach spaces $$ \mathbb{C}_{\theta}= \{f: |f|_{\theta}<\infty\}, \quad \mathbb{B}_{\theta,\varepsilon,\lambda} =\{ f: ||f||_{\theta,\varepsilon,\lambda} <\infty\}$$
and on them we consider the action of the transition operator $P$:
$$Pf(x)= \mathbb{E} (f(Mx+Q)) =\int f(hx) d\mu(h)$$
where $(Q,M)$ is a random variable distributed according to $\mu$. We consider also the Fourier operator $P_v$ defined by
$$P_vf(x)= P(\mathcal{X}_v f)(x) = \mathbb{E} [\mathcal{X}_v (Mx+Q) f(Mx+Q)]$$
where $v\in V$. Notice that $P_0=P$. We will prove later (Theorem \ref{rwgTh3-4}) that the operators $P_v$ are bounded on $\mathbb{B}_{\theta,\varepsilon,\lambda}$ for appropriately chosen parameters $\theta, \varepsilon,\lambda$.  Also, for $v$ small,
 they have a unique dominant eigenvalue $k(v)$ with $|k(v)|<1$ if $v\neq 0$, $k(0)=1$ and the rest of the spectrum of $P_v$ is contained in a disk
 of center $0$ and radius less than $|k(v)|$. For an operator $A$ we denote by $\sigma (A)$ its spectrum and by $r(A)$ its spectral radius.
These properties are based on the estimations below and  \cite{Ionescu50AM,KellerLiverani99spectrum}.
The following simple but basic fact was observed in \cite{GuivarchHardy88AIHP}. For reader's convenience, we give its proof.
\begin{pro}\label{rwgpro3-1} We have
$$P_v^n f(x) =\mathbb{E} ( \mathcal{X}_v(S_n^x) f(X_n^x)).$$
\end{pro}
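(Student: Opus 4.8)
The plan is to argue by induction on $n$, exploiting the Markov structure of the recursion \eqref{rwg1-1} together with the multiplicativity of the character $\mathcal{X}_v$. The base case is immediate: for $n=1$ we have $X_1^x = M_1 x + Q_1$ with the law of $Mx+Q$ and $S_1^x = X_1^x$, so the definition $P_v f(x) = \mathbb{E}[\mathcal{X}_v(Mx+Q)f(Mx+Q)]$ reads exactly $P_v f(x) = \mathbb{E}[\mathcal{X}_v(S_1^x) f(X_1^x)]$. (If one starts at $n=0$, the empty sum $S_0^x=0$ matches $P_v^0=\mathrm{Id}$, which confirms that $S_n^x$ must be read as $\sum_{k=1}^n X_k^x$; the deterministic term $X_0^x=x$ is irrelevant for the subsequent limit theorems.)

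For the inductive step I would write $P_v^n f = P_v(P_v^{n-1} f)$ and apply the definition of $P_v$ to obtain
$$P_v^n f(x) = \mathbb{E}\big[\mathcal{X}_v(X_1^x)\,(P_v^{n-1}f)(X_1^x)\big].$$
The crucial ingredient is a cocycle identity: writing $h_k$ for the affine map $y\mapsto M_k y + Q_k$, one has $X_n^x = h_n\circ\cdots\circ h_1(x)$, so that conditionally on $h_1$ the process $(X_{k+1}^x)_{k\ge 0}$ is a copy of the recursion started at $X_1^x$ and driven by the shifted sequence $(Q_{k+1},M_{k+1})_{k\ge 1}$. Since $h_1$ is independent of $(h_k)_{k\ge 2}$ and the driving law $\mu^{\otimes\mathbb{N}}$ is time-homogeneous, the induction hypothesis applies verbatim to this shifted copy and yields
$$(P_v^{n-1}f)(X_1^x) = \mathbb{E}\Big[\mathcal{X}_v\Big(\textstyle\sum_{j=2}^n X_j^x\Big) f(X_n^x)\,\Big|\, h_1\Big].$$

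It then remains only to merge the two characters. Because $\mathcal{X}_v$ is a character, $\mathcal{X}_v(a)\mathcal{X}_v(b) = \mathcal{X}_v(a+b)$, and since $S_n^x = X_1^x + \sum_{j=2}^n X_j^x$ the product $\mathcal{X}_v(X_1^x)\,\mathcal{X}_v(\sum_{j=2}^n X_j^x)$ collapses to $\mathcal{X}_v(S_n^x)$. Taking the outer expectation and invoking the tower property gives $P_v^n f(x) = \mathbb{E}[\mathcal{X}_v(S_n^x)f(X_n^x)]$, completing the induction.

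I expect the only genuinely delicate point to be the bookkeeping of the shift in the inductive step: one must justify cleanly that conditioning on $h_1$ turns the tail of the trajectory into an independent copy of the recursion started from $X_1^x$, so that the induction hypothesis can be quoted with no change of constants. Once this shift-invariance and the independence of $h_1$ from the subsequent coefficients are recorded, everything else reduces to the elementary identity $\mathcal{X}_v(a+b)=\mathcal{X}_v(a)\mathcal{X}_v(b)$.
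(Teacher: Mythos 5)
Your proof is correct and is essentially the paper's own argument: the paper also proceeds by induction, writing $P_v^n f = P(\mathcal{X}_v\, P_v^{n-1}f)$, applying the induction hypothesis to the process restarted at $Mx+Q$, and collapsing the two characters via $\mathcal{X}_v(a)\mathcal{X}_v(b)=\mathcal{X}_v(a+b)$; you merely make explicit the conditioning and shift bookkeeping that the paper leaves implicit. Your side remark on indexing is also apt: the identity holds with $S_n^x=\sum_{k=1}^n X_k^x$ (the convention effectively used in Section 3, e.g.\ in $S_n^x-S_n^y=Z_n(x-y)$), not with the introduction's $\sum_{k=0}^n X_k^x$, the discrepancy being only the harmless deterministic factor $\mathcal{X}_v(x)$.
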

\begin{proof} If $n=1$, then the formula above coincide with definition of $P_v$. By induction, we have
\begin{align*}
    P_v^n f(x)&=P(\mathcal{X}_v P_v^{n-1} f)(x) = \mathbb{E} [\mathcal{X}_v (Mx+Q) ( P_v^{n-1} f)(Mx+Q)]\\
   & =  \mathbb{E} [\mathcal{X}_v (Mx+Q) \mathcal{X}_v (S_{n-1}^{Mx+Q})f(X_{n-1}^{Mx+Q})]\\
    &=\mathbb{E} [\mathcal{X}_v (S_n^x ) f(X_n^x) ].
\end{align*}
\end{proof}

The following proposition  gives the basic estimations which allow  the use of \cite{Ionescu50AM}. Similar estimations were used in \cite{LePage83,LePage89} for  different purposes.
\begin{pro}\label{rwgpro3-2}
There exists $D=D(\theta)<\infty$ such that for any $v\in V$, $n\in\mathbb{N}$, $\theta<\alpha $ we have
\begin{equation}\label{rwg3-1}
|P_v^nf|_\theta \leq D|f|_{\theta}.
\end{equation}
If $2\lambda +\varepsilon <\alpha$, $\varepsilon<1$, $\theta <2\lambda$, there exist constants $C_1, C_2\geq 0, \rho \in [0,1)$ depending on $\theta,\varepsilon,\lambda$  such that for any $n \in \mathbb{N} $, $f\in \mathbb{B}_{\theta,\varepsilon,\lambda}$, $v\in V$,
\begin{equation}\label{rwg3-2}
[P_v^n f ]_{\varepsilon,\lambda} \leq C_1 \rho^n[f]_{\varepsilon,\lambda}+ C_2 \norm{v}^{\varepsilon}|f|_{\theta}.
\end{equation}
\end{pro}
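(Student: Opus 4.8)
The plan is to use the representation $P_v^n f(x) = \mathbb{E}(\mathcal{X}_v(S_n^x) f(X_n^x))$ from Proposition \ref{rwgpro3-1} together with the moment bounds on $M_n\cdots M_1$ provided by the definition of $\kappa(s)$ and Proposition \ref{rwgpro2-2}. The key point throughout is that $\kappa(\theta)<1$ and $\kappa(2\lambda+\varepsilon)<1$ whenever $\theta<\alpha$ and $2\lambda+\varepsilon<\alpha$, so that products of the relevant moments are summable and the contraction $L(\bar\mu)<0$ is quantitatively exploited.

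\medskip
\noindent\emph{The $|\cdot|_\theta$ bound \eqref{rwg3-1}.} Since $|\mathcal{X}_v|=1$, I would bound $|P_v^n f(x)| \le \mathbb{E}(|f(X_n^x)|) \le |f|_\theta\,\mathbb{E}\bigl((1+\norm{X_n^x})^\theta\bigr)$. Using $X_n^x = M_n\cdots M_1 x + (\text{affine part})$ and subadditivity of $t\mapsto (1+t)^\theta$ when $\theta\le 1$ (and Minkowski in $\mathbb{L}^\theta$ otherwise, exactly as in the proof of Proposition \ref{rwgpro2-2}), one controls $\mathbb{E}(\norm{M_n\cdots M_1}^\theta) \le C(\kappa(\theta)+\epsilon)^n$ and $\mathbb{E}(\norm{R}^\theta)<\infty$. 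Because $\kappa(\theta)+\epsilon<1$ for small $\epsilon$, the geometric contributions are uniformly bounded in $n$, yielding a constant $D=D(\theta)$ independent of $v$ and $n$. This step is routine.

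\medskip
\noindent\emph{The H\"older bound \eqref{rwg3-2}.} This is the substantive part. Writing $P_v^n f(x)-P_v^n f(y) = \mathbb{E}\bigl(\mathcal{X}_v(S_n^x)f(X_n^x)-\mathcal{X}_v(S_n^y)f(X_n^y)\bigr)$ under a common driving sequence $(Q_k,M_k)$, I would split the increment into two pieces: one where the phase $\mathcal{X}_v$ is frozen and $f(X_n^x)-f(X_n^y)$ is estimated by $[f]_{\varepsilon,\lambda}$, and one where $f$ is frozen and the phase difference $\mathcal{X}_v(S_n^x)-\mathcal{X}_v(S_n^y)$ is estimated via $|\mathcal{X}_v(a)-\mathcal{X}_v(b)|\le 2^{1-\varepsilon}\norm{v}^\varepsilon\norm{a-b}^\varepsilon$. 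For the first piece, note $X_n^x-X_n^y = M_n\cdots M_1(x-y)$, so $\norm{X_n^x-X_n^y}^\varepsilon = \norm{M_n\cdots M_1}^\varepsilon\,\text{(directional factor)}^\varepsilon\norm{x-y}^\varepsilon$, and the growth factors $(1+\norm{X_n^x})^\lambda(1+\norm{X_n^y})^\lambda$ bring in a further $\norm{M_n\cdots M_1}^{2\lambda}$. Taking expectations produces $\mathbb{E}(\norm{M_n\cdots M_1}^{2\lambda+\varepsilon}) \le C(\kappa(2\lambda+\varepsilon)+\epsilon)^n$; with $\rho := \kappa(2\lambda+\varepsilon)+\epsilon < 1$ this gives the term $C_1\rho^n[f]_{\varepsilon,\lambda}$. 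For the second piece, $S_n^x - S_n^y = \sum_{k=0}^n M_k\cdots M_1(x-y)$ so $\norm{S_n^x-S_n^y}^\varepsilon \le \bigl(\sum_{k} \norm{M_k\cdots M_1}\bigr)^\varepsilon\norm{x-y}^\varepsilon$; combined with $|f(X_n^y)|\le |f|_\theta(1+\norm{X_n^y})^\theta$ and the condition $\theta<2\lambda$, the relevant moments are again summable, and one extracts the factor $\norm{v}^\varepsilon|f|_\theta$ to obtain $C_2\norm{v}^\varepsilon|f|_\theta$. The hypotheses $2\lambda+\varepsilon<\alpha$, $\varepsilon<1$, $\theta<2\lambda$ are precisely what make every exponent appearing here strictly below $s_\infty$ and below $1$ where subadditivity is needed.

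\medskip
\noindent The main obstacle I anticipate is the bookkeeping in the second piece: one must simultaneously control the $\varepsilon$-H\"older phase difference of $S_n^x$ (a sum of products, not a single product) and the $\theta$-growth of $f(X_n^y)$, while keeping the final bound of the clean form $C_2\norm{v}^\varepsilon|f|_\theta$ with no surviving $n$-dependence. The key is to apply H\"older's inequality with exponents chosen so that the total power of $\norm{M_k\cdots M_1}$ stays below $\alpha$, using $\theta<2\lambda$ and $2\lambda+\varepsilon<\alpha$ to guarantee convergence of the series $\sum_k(\kappa(\cdot)+\epsilon)^{k}$; the contraction $L(\bar\mu)<0$ ensures that these partial products are summable rather than merely bounded.
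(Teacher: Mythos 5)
Your proposal is correct and takes essentially the same route as the paper's proof: the representation of Proposition \ref{rwgpro3-1}, the identical two-term splitting of $P_v^nf(x)-P_v^nf(y)$ (phase frozen versus $f$ frozen), geometric decay from $\kappa(2\lambda+\varepsilon)<1$ for the first term, and the phase bound $|e^{i\ipro{x}{y}}-1|\le 2\norm{x}^{\varepsilon}\norm{y}^{\varepsilon}$ together with $\sup_n\mathbb{E}\norm{Z_n}^{s}<\infty$ for $s<\alpha$ (where $Z_n=\sum_{k}M_k\cdots M_1$) and H\"older's inequality (via $\theta+\varepsilon<\alpha$) for the second. The one point you gloss over --- after dividing by $(1+\norm{x})^{\lambda}(1+\norm{y})^{\lambda}$ the growth factors produce mixed terms such as $\mathbb{E}\bigl[\mnorm{M_n\cdots M_1}^{\varepsilon}\norm{X_n}^{2\lambda}\bigr]$, which also require H\"older with exponents $\frac{2\lambda+\varepsilon}{\varepsilon}$ and $\frac{2\lambda+\varepsilon}{2\lambda}$ --- is exactly the bookkeeping the paper carries out, so it is a detail rather than a gap.
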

\begin{proof}
Notice that
\begin{equation}\label{rwg3-3}
    X_n^x=X_n^y+ \Pi_n(x-y),
\end{equation}
where $\Pi_n= M_n M_{n-1}\cdots M_1$. Writing $X_n=X_n^0$, by Proposition \ref{rwgpro3-1}  we have
\begin{align*}
    |P_v^nf(x)|_{\theta}&\leq  \mathbb{E} \left[ \frac{|f(X_n^x)|}{(1+\norm{X_n^x})^{\theta}} \cdot \frac{(1+\norm{X_n^x})^{\theta}}{(1+\norm{x})^{\theta}}\right]
     \\ &\leq |f|_\theta \mathbb{E} \left[  \frac{(1+\norm{X_n} +\norm{\Pi_n x})^{\theta}}{(1+\norm{x})^{\theta}}\right]\\ &
     \leq 3^\theta |f|_\theta   \mathbb{E} ({1+\norm{X_n}^\theta + \mnorm{\Pi_n }^\theta} )
\\  &\leq     3^\theta |f|_\theta   \left(1+ \mathbb{E} \norm{X_n}^\theta + C (\kappa (\theta)+ \epsilon')^n \right)
\end{align*}
where $0<\epsilon'< 1-\kappa(\theta)$ and $C$ is a constant. If we set  $D=3^\theta \left(1+ \sup_n\mathbb{E} \norm{X_n}^\theta + C\right)<\infty$,  the first inequality \eqref{rwg3-1} follows.

Now we turn to the proof of \eqref{rwg3-2}. By Proposition \ref{rwgpro3-1}, we have
\begin{equation*}
    P_v^nf(x)-P_v^nf(y)= \mathbb{E} \Big[\mathcal{X}_v(S_n^x) \big(f(X_n^x)- f(X_n^y) \big)\Big]+\mathbb{E} \Big[\big(\mathcal{X}_v(S_n^x) -\mathcal{X}_v(S_n^y) \big) f(X_n^y)\Big].
\end{equation*}
Without loss of generality, assume that $\norm{x}\geq \norm{y}$. Let
\begin{align*}
 \textsc{J}_1(x,y)& = \frac{\left|\mathbb{E} \Big[\mathcal{X}_v(S_n^x) \big(f(X_n^x)- f(X_n^y) \big)\Big]\right|}{\norm{x-y}^{\varepsilon}(1+\norm{x})^{\lambda}(1+\norm{y})^{\lambda}}, \\
 \textsc{J}_2 (x,y)&=  \frac{\left|\mathbb{E}  \Big[\big(\mathcal{X}_v(S_n^x) -\mathcal{X}_v(S_n^y) \big) f(X_n^y)\Big]\right|}{\norm{x-y}^{\varepsilon}(1+\norm{x})^{\lambda}(1+\norm{y})^{\lambda}}.
\end{align*}
The first step is to estimate $\textsc{J}_1(x,y) $.
\begin{align*}
   \textsc{J}_1(x,y) & \leq \mathbb{E} \Big(|f(X_n^x)- f(X_n^y) |/( \norm{x-y}^{\varepsilon}(1+\norm{x})^{\lambda}(1+\norm{y})^{\lambda}) \Big)\\
   & \leq [f]_{\varepsilon,\lambda} \mathbb{E} \left( \frac{ \norm{X_n^x-X_n^y}^\varepsilon (1+ \norm{X_n^x})^{\lambda} (1+\norm{X_n^y})^{\lambda}}{ \norm{x-y}^{\varepsilon}(1+\norm{x})^{\lambda}(1+\norm{y})^{\lambda} }\right)\\
   & \leq [f]_{\varepsilon,\lambda} \mathbb{E}\left( \frac{\mnorm{\Pi_n}^\varepsilon (1+\norm{X_n} +\norm{\Pi_n x})^{\lambda}(1+\norm{X_n} +\norm{\Pi_n y})^{\lambda}}{(1+\norm{x})^{\lambda}(1+\norm{y})^{\lambda}} \right)\\
    &\leq [f]_{\varepsilon,\lambda} \mathbb{E} \left(\mnorm{\Pi_n}^\varepsilon (1+\norm{X_n}+ \mnorm{\Pi_n})^{2\lambda} \right)\\
    & \leq 3^{2\lambda}[f]_{\varepsilon,\lambda} \left( \mathbb{E} \mnorm{\Pi_n}^\varepsilon  + \mathbb{E} \mnorm{\Pi_n}^{2\lambda+\varepsilon}+   \left(\mathbb{E} \mnorm{\Pi_n}^{2\lambda+\varepsilon}\right)^{\frac{\varepsilon}{2\lambda+\varepsilon}} \left(\mathbb{E} \norm{X_n} ^{2\lambda+\varepsilon}\right)^{\frac{2\lambda}{2\lambda+\varepsilon} }  \right).
\end{align*}
Proposition \ref{rwgpro2-1} allows us to  choose $\epsilon_1>0$ and a constant $A_1$ such that
\begin{equation*}
 \max\{\kappa(\varepsilon), \kappa(2\lambda+\varepsilon)\} + \epsilon_1 <1,
\end{equation*}
and  for all $n\in \mathbb{N}$,
\begin{align*}
      \mathbb{E} \mnorm{\Pi_n}^{2\lambda+\varepsilon} \leq A_1 ( \kappa(2\lambda+\varepsilon)+ \epsilon_1)^n, \quad   \mathbb{E} \mnorm{\Pi_n}^{\varepsilon} \leq A_1 ( \kappa(\varepsilon)+ \epsilon_1)^n.
\end{align*}
Now setting
\begin{align*}
    \rho=\max\left\{ \kappa(\varepsilon)+ \epsilon_1, \kappa(2\lambda+\varepsilon)+\epsilon_1, (\kappa(2\lambda+\varepsilon)+\epsilon_1)^{\frac{2\lambda}{2\lambda+\varepsilon}}\right\}
\end{align*}
and
\begin{align*}
     C_1= 3^{2\lambda}\left(2A_1 +(A_1)^{\frac{\varepsilon}{2\lambda+\varepsilon}} \sup_n\left(\mathbb{E} \norm{X_n} ^{2\lambda+\varepsilon}\right)^{\frac{2\lambda}{2\lambda+\varepsilon}} \right),
\end{align*}
  we have
\begin{equation}\label{rwg3-4}
    \textsc{J}_1(x,y)\leq C_1\rho^n [f]_{\varepsilon,\lambda}.
\end{equation}
Now we are going to estimate $ \textsc{J}_2(x,y) $. Observe that
\begin{equation*}
    |e^{i\ipro{x}{y}}-1|\leq 2\norm{x}^{\varepsilon}\norm{y}^\varepsilon \quad \mathrm{and} \quad S_n^x-S_n^y= Z_n (x-y),
\end{equation*}
where $Z_n=\sum_{k=1}^n M_k\cdots M_1$. Using these facts, we get
\begin{align*}
    \textsc{J}_2(x,y)&\leq  2\norm{v}^\varepsilon |f|_\theta \mathbb{E}\left[ \frac{\norm{Z_n}^{\varepsilon} (1+\norm{X_n^y})^{\theta}}{(1+\norm{x})^{\lambda}(1+\norm{y})^{\lambda}}\right]\\
    &\leq 2\norm{v}^\varepsilon |f|_\theta \mathbb{E}\left[ \frac{\norm{Z_n}^{\varepsilon} (1+\norm{X_n}+\mnorm{\Pi_n}\cdot \norm{y})^{\theta}}{(1+\norm{x})^{\lambda}(1+\norm{y})^{\lambda}}\right]\\
    &\leq 2\cdot 3^{\theta}\norm{v}^\varepsilon |f|_\theta \mathbb{E}[\norm{Z_n}^{\varepsilon} (1+\norm{X_n}^{\theta}+\mnorm{\Pi_n}^{\theta})].
\end{align*}
To finish our proof, the left thing is to prove the uniform boundedness of the expectation in the last expression.
For $s<\alpha$, by the properties of $\kappa(s)$, there exists $\epsilon_s>0$ and a constant $A_s>1$ such that
\begin{equation*}
    \kappa(s)+\epsilon_s<1 \quad \mathrm{and} \quad \mathbb{E}\mnorm{\Pi_n}^{s} \leq A_s (\kappa(s)+\epsilon_s)^n.
\end{equation*}
Then if  $s<\min\{1, \alpha\}$,
\begin{equation*}
     \mathbb{E}\norm{Z_n}^{s} \leq 1+ \sum_{m=1}^{n}\mathbb{E}\mnorm{\Pi_m}^{s}\leq  A_s\sum_{m=0}^{n}(\kappa(s)+\epsilon_s)^m,
\end{equation*}
and if  $s\in [1, \alpha)$,
 \begin{equation*}
     \mathbb{E}\norm{Z_n}^{s} \leq \left(1+ \sum_{m=1}^{n}(\mathbb{E}\mnorm{\Pi_m}^{s})^{\frac{1}{s}}\right)^{s}\leq  \left(A_s\sum_{m=0}^{n}(\kappa(s)+\epsilon_s)^\frac{m}{s} \right)^{s} .
\end{equation*}
Therefore for $s<\alpha$,
\begin{equation*}
    \sup_n  \mathbb{E}\norm{Z_n}^{s} <\infty.
\end{equation*}
Also we have that $\sup_n \mathbb{E}\norm{X_n}^q <\infty$ for $q< \alpha$. Now noticing that $\theta+\varepsilon<\alpha$ and applying the H\"{o}lder inequality, we obtain that
\begin{equation*}
    \sup_n \mathbb{E}[\norm{Z_n}^{\varepsilon} (1+\norm{X_n}^{\theta}+\mnorm{\Pi_n}^{\theta})]<\infty.
\end{equation*}
We set $C_2= 2\cdot 3^{\theta}  \sup_n \mathbb{E}[\norm{Z_n}^{\varepsilon} (1+\norm{X_n}^{\theta}+\mnorm{\Pi_n}^{\theta})] $ and thus
\begin{equation}\label{rwg3-5}
   \textsc{J}_2(x,y)\leq C_2 \norm{v}^{\varepsilon}|f|_{\theta}.
\end{equation}
Finally combining  \eqref{rwg3-4} and \eqref{rwg3-5}  we obtain that
\begin{equation*}
    [P_v^n f ]_{\varepsilon,\lambda} \leq \sup_{x,y} (\textsc{J}_1(x,y)+\textsc{J}_2(x,y) ) \leq C_1 \rho^n[f]_{\varepsilon,\lambda}+ C_2 \norm{v}^{\varepsilon}|f|_{\theta}.
\end{equation*}
\end{proof}

\begin{pro}\label{rwgpro3-3}
Assume that $2\lambda+\epsilon<\alpha$. Then, for any $v\neq 0$, the equation $P_v f= z f$, $|z|=1$, $f\in \mathbb{B}_{\theta,\varepsilon,\lambda}$ implies $f=0$. In particular, $r(P_v)<1$.
\end{pro}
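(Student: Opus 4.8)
The plan is to derive the final statement $r(P_v)<1$ from two ingredients: the quasi-compactness of $P_v$, which comes for free from Proposition \ref{rwgpro3-2}, and the exclusion of eigenvalues of modulus one. Indeed, \eqref{rwg3-1} and \eqref{rwg3-2} are exactly the Doeblin--Fortet inequalities required by the Ionescu-Tulcea--Marinescu theorem (\cite{Ionescu50AM}), the embedding $\mathbb{B}_{\theta,\varepsilon,\lambda}\hookrightarrow\mathbb{C}_\theta$ being compact; they give $\sup_n\norm{P_v^n}_{\theta,\varepsilon,\lambda}<\infty$, hence $r(P_v)\le 1$, and an essential spectral radius at most $\rho<1$. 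Thus the part of the spectrum in $\{|z|>\rho\}$ reduces to finitely many eigenvalues of finite multiplicity, and it suffices to prove that $P_vf=zf$ with $|z|=1$, $f\in\mathbb{B}_{\theta,\varepsilon,\lambda}$ forces $f=0$; this immediately yields $r(P_v)<1$.

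So assume $P_vf=zf$ with $|z|=1$ and $f\neq 0$. By Proposition \ref{rwgpro3-1}, $z^nf(x)=\mathbb{E}(\mathcal{X}_v(S_n^x)f(X_n^x))$, hence $|f(x)|\le\mathbb{E}|f(X_n^x)|=P^n|f|(x)$. Now $|f|\in\mathbb{C}_\theta$, the law of $X_n^x$ converges to $\eta$ (Proposition \ref{rwgpro2-2}), and $\sup_n\mathbb{E}\norm{X_n^x}^{\theta'}<\infty$ for $\theta<\theta'<\alpha$ (as in the proof of Proposition \ref{rwgpro3-2}) supplies the uniform integrability; therefore $P^n|f|(x)\to\eta(|f|)$ for each $x$. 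I deliberately use Proposition \ref{rwgpro2-2} here, rather than the spectral gap of $P$, since Theorem \ref{rwgTh1-6} is only established later. It follows that $|f|\le\eta(|f|)=:M$ everywhere, so $\sup|f|\le M\le\sup|f|$, and consequently $|f|\equiv M$ on $\texttt{supp}\,\eta$.

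The next, and decisive, step is a rigidity property of the phase. For $x\in\texttt{supp}\,\eta$ and $\mu$-a.e.\ $h$ one has $hx\in\texttt{supp}\,\eta$ by stationarity, so $|f(hx)|=M$, and
\begin{equation*}
M=|zf(x)|=\left|\int \mathcal{X}_v(hx)f(hx)\,d\mu(h)\right|\le\int|f(hx)|\,d\mu(h)=M
\end{equation*}
is an equality in the triangle inequality; the integrand must then have $\mu$-a.e.\ constant argument, and removing the null set by continuity of $f$ gives
\begin{equation*}
f(hx)=z\,\overline{\mathcal{X}_v(hx)}\,f(x),\qquad h\in\texttt{supp}\,\mu,\ x\in\texttt{supp}\,\eta.
\end{equation*}
Since $L(\bar{\mu})<0$, the semigroup $[\texttt{supp}\,\mu]$ contains elements $g=h_\ell\cdots h_1$ ($h_j\in\texttt{supp}\,\mu$) with contracting linear part; such a $g$ has a unique fixed point $p_g\in\texttt{supp}\,\eta$ (the limit of the corresponding periodic series defining $\eta$), whose orbit $p_g,h_1p_g,\dots,h_\ell\cdots h_1p_g=p_g$ under the cyclic word is periodic. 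Writing the relation above around this cycle and multiplying the $\ell$ identities (using that $\mathcal{X}_v$ is a character) yields
\begin{equation*}
z^{\ell}=\mathcal{X}_v(\Psi_g),\qquad \Psi_g=\sum_{k=1}^{\ell}h_k\cdots h_1\,p_g.
\end{equation*}
Comparing this identity for two contracting elements $g_1,g_2$ of lengths $\ell_1,\ell_2$ eliminates $z$ and leaves
\begin{equation*}
\ipro{v}{\ell_2\Psi_{g_1}-\ell_1\Psi_{g_2}}\in 2\pi\mathbb{Z}.
\end{equation*}

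The core of the proof is therefore the claim that the additive subgroup $\Gamma$ of $V$ generated by the vectors $\ell_2\Psi_{g_1}-\ell_1\Psi_{g_2}$, as $g_1,g_2$ range over the contracting elements of $[\texttt{supp}\,\mu]$, is dense in $V$. Granting this, $\ipro{v}{\cdot}$ is a continuous linear form taking values in the discrete set $2\pi\mathbb{Z}$ on a dense subgroup, which for $v\neq 0$ is impossible (then $\ipro{v}{\Gamma}$ would be dense in $\mathbb{R}$); hence $v=0$, contradicting the hypothesis, so $f=0$. This density is precisely an aperiodicity statement for the additive cocycle $(h,x)\mapsto\ipro{v}{hx}$, and is where conditions $C_1$ (\emph{i-p}) and $C_4$ (absence of a $\texttt{supp}\,\mu$-fixed point) enter essentially, together with the non-degeneracy of $\eta$ (Proposition \ref{rwgpro2-2}, Theorem \ref{rwgTh2-3}) which guarantees that $\texttt{supp}\,\eta$ lies in no countable union of parallel affine hyperplanes $\{\ipro{v}{\cdot}\in c+2\pi\mathbb{Z}\}$. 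Establishing this density — ruling out any residual lattice obstruction produced by the multiplicative part — is the main obstacle; everything else is the soft functional-analytic reduction and the two elementary manipulations above.
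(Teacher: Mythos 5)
Your opening moves are sound and coincide with the paper's: the Ionescu-Tulcea--Marinescu reduction via Proposition \ref{rwgpro3-2}, the bound $|f|\leq \lim_n P^n|f| = \eta(|f|)$ forcing $|f|$ to be constant on $\texttt{supp}\,\eta$, the one-step phase identity $f(hx)=z\,\overline{\mathcal{X}_v(hx)}f(x)$ obtained from equality in the triangle inequality, and the cycle relation $z^{\ell}=\mathcal{X}_v(\Psi_g)$ are all correct. But the proof is not complete: the whole difficulty of the proposition has been displaced into the claim that the subgroup $\Gamma$ generated by the vectors $\ell_2\Psi_{g_1}-\ell_1\Psi_{g_2}$ is dense in $V$, and you give no argument for it. Nothing in the paper can be cited to close this: the statement ``$\ipro{v}{\Gamma}\subset 2\pi\mathbb{Z}$ forces $v=0$'' is, up to reformulation, exactly the non-arithmeticity of the cocycle $(h,x)\mapsto \ipro{v}{hx}$ that Proposition \ref{rwgpro3-3} encodes, so your reduction is of essentially the same difficulty as the original statement. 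In particular, the fact you invoke in its support --- that $\texttt{supp}\,\eta$ lies in no countable union of parallel affine hyperplanes --- says nothing by itself about the specific countable family of cycle vectors $\Psi_g$; there is no visible route from the stated ingredients ($C_1$, $C_4$, non-degeneracy of $\eta$) to the density of $\Gamma$ other than redoing an argument of the paper's type.

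The paper's proof never needs such a group-density statement, and it is worth seeing how it sidesteps the issue. Instead of one-step rigidity at fixed points, it uses the $n$-step identity for two base points $x,y\in\texttt{supp}\,\eta$, namely $\frac{f(x)}{f(y)}e^{i\ipro{v}{Z_n(y-x)}}=\frac{f(X_n^x)}{f(X_n^y)}$ with $Z_n=\sum_{k=1}^n M_k\cdots M_1$. A H\"older estimate together with $\kappa(2\lambda+\varepsilon)<1$ gives $\lim_n\mathbb{E}\left|\frac{f(X_n^x)}{f(X_n^y)}-1\right|=0$, and the a.s.\ convergence $Z_n\to Z$ (Proposition \ref{rwgpro2-5}, a consequence of $L(\bar{\mu})<0$) then yields the pointwise identity $\frac{f(x)}{f(y)}=e^{i\ipro{Z^*(\omega)v}{x-y}}$ for a.e.\ $\omega$. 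Choosing $d$ pairs $(x_j,y_j)$ in $\texttt{supp}\,\eta$ with $x_j-y_j$ spanning $V$ (possible because $\eta$ charges no proper affine subspace, Proposition \ref{rwgpro2-2}), this identity confines the law $\eta_v$ of $Z^*(\omega)v$ to an intersection of $d$ families of equally spaced parallel hyperplanes, hence to a discrete set --- contradicting the non-atomicity of $\eta_v$, which holds by Proposition \ref{rwgpro2-2} applied to $\mu^*_v$ (legitimate since Proposition \ref{rwgpro2-5} shows $\mu^*_v$ satisfies condition $C$). In other words, the aperiodicity is extracted from an already-established measure-theoretic fact about the companion recursion \eqref{rwg1-3}, with the randomness of $Z(\omega)$ doing the work that your deterministic lattice-exclusion claim was supposed to do. To repair your proof you would either have to prove your density claim --- and the natural proof is precisely this randomization --- or switch to the paper's route after your step $|f|\equiv M$ on $\texttt{supp}\,\eta$.
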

If $ \texttt{supp}\bar{\mu}$ consists of similarities, this is Lemma 3.14 in \cite{BuraczewskiGuivarch10PTRF} ; in view of its role here we give the proof.
\begin{proof}[Proof of Proposition  \ref{rwgpro3-3}]
Assume that $P_v f= zf$ for some nonzero $f\in \mathbb{B}_{\theta,\varepsilon,\lambda}$. Then the function $f$ is bounded. Indeed for every $n$
\begin{equation*}
    |f(x)|=|z^n f(x)|\leq P^n(|f|)(x),
\end{equation*}
hence
\begin{equation*}
    |f(x)| \leq \lim_{n\rightarrow \infty}P^n (|f|)(x)= \eta(|f|).
\end{equation*}
Next observe that since $f$ is continuous, on the support of $\eta$ the function $|f|$ is equal to its maximum and without  loss of generality we may assume that this  maximum is 1.
For every $n$ and $x\in \texttt{supp} \eta$,  noticing that $z^nf(x)= \mathbb{E}[e^{i\ipro{v}{S_n^x}} f(X_n^x)]$ and using a convexity argument, we can show  that
\begin{equation*}
    z^n f(x)= e^{i\ipro{v}{S_n^x}} f(X_n^x) \quad \mathbb{P}\mbox{-a.e.}
\end{equation*}
Hence for every $x, y \in \texttt{supp} \eta$,
\begin{equation}\label{rwg3-6}
     \frac{f(x)}{f(y)}e^{i\ipro{v}{Z_n(y-x)}}=\frac{f(X_n^x)}{f(X_n^y)},
\end{equation}
where $Z_n=\sum_{k=1}^n M_k\cdots M_1$.
By the H\"{o}lder inequality and since $|f|=1$, we have
\begin{align*}
    &~~~\limsup_{n\rightarrow \infty} \mathbb{E}\left| \frac{f(X_n^x)}{f(X_n^y)}-1\right|\\ &\leq [f]_{\varepsilon,\lambda}  \limsup_{n\rightarrow \infty} \mathbb{E}[\norm{X_n^x-X_n^y}^\varepsilon (1+\norm{X_n^x})^\lambda (1+\norm{X_n^y})^\lambda]\\
    & =[f]_{\varepsilon,\lambda} \limsup_{n\rightarrow \infty} \mathbb{E}[\norm{M_n\cdots M_1 (x-y)}^\varepsilon (1+\norm{X_n^x})^\lambda (1+\norm{X_n^y})^\lambda ]\\
    &\leq  [f]_{\varepsilon,\lambda} \norm{x-y}^\varepsilon \limsup_{n\rightarrow \infty}\left[\mathbb{E}\mnorm{M_n\cdots M_1}^{2\lambda+\varepsilon}\right]^{\frac{\varepsilon}{2\lambda+\varepsilon}}\\
    &~~~ \cdot \limsup_{n\rightarrow \infty}\left[\mathbb{E} (1+\norm{X_n^x})^{\lambda+\frac{\varepsilon}{2}} (1+\norm{X_n^y})^{\lambda+\frac{\varepsilon}{2}} \right]^{\frac{2\lambda}{2\lambda+\varepsilon}}.
\end{align*}
By our assumption, the first limit is zero and the second one is finite. Hence
\begin{equation*}
   \limsup_{n\rightarrow \infty} \mathbb{E}\left| \frac{f(X_n^x)}{f(X_n^y)}-1\right| =0.
\end{equation*}
Therefore for $\mathbb{P}$ a.e. trajectory $ \omega$  there exists a sequence $\{n_k\}=\{n_k(\omega)\}$ such that
\begin{equation*}
     \lim_{n_k\rightarrow \infty}  \frac{f(X_{n_k}^x)}{f(X_{n_k}^y)}=1.
\end{equation*}
By Proposition \ref{rwgpro2-5}, $\lim_{n\rightarrow \infty} Z_n(\omega)=Z(\omega)$  exists a.s.. Hence letting $k\rightarrow \infty$ we obtain that there is $\Omega_0$ such that $\mathbb{P}(\Omega_0)=1$ and for $\omega\in \Omega_0$,
\begin{equation*}
     \frac{f(x)}{f(y)}=e^{i\ipro{v}{Z(\omega)( x-y)}}=e^{i\ipro{Z^*(\omega)v}{ x-y}}.
\end{equation*}
We are going to prove that this leads to a contradiction whenever $v\neq0$. We choose $x_j,y_j \in \texttt{supp}\eta$, $j=1,\cdots, d$ with $x_j-y_j$ spanning $V$ as a vector space. Such points exist because the support of $\eta$, as a set invariant under the action of $\texttt{supp} \mu$, is not contained in some proper affine subspace of $V$. Let $\eta_v$ be the law of $W(\omega)=Z^*(\omega)v$. Then for every $j$  the support of $\eta_v$ is contained in the union of affine hyperplanes $\bigcup_{n\in \mathbb{Z}}\{H_j+ ns_jv_j\}$, where $H_j$  is some hyperplane orthogonal to $v_j=x_j-y_j$ and $s_j$ is appropriately chosen constants. Taking intersection of all such sets defined for every $j$ we conclude that
$\texttt{supp}\eta_v$ is contained in some discrete set of points, hence $\texttt{supp}\eta_v$ is discrete. This contradicts Proposition \ref{rwgpro2-2}.

For the last assertion we observe that in view of Theorem of Ionescu Tulcea and Marinescu  \cite{Ionescu50AM}, if $z$ belongs to the spectrum of $P_v$ and $|z|=1$ then $z$ is an eigenvalue of $P_v$. \end{proof}

The following theorem corresponds to  items 1-3 of Theorem \ref{rwgTh1-6} and is our basic tool for the study of $P_v$.
\begin{theorem}\label{rwgTh3-4}
Assume $\theta,\varepsilon,\lambda$ satisfy $0<\varepsilon<1,$ $2\lambda +\varepsilon<\alpha$, $\theta \leq 2\lambda$. Then $P_v$ has the following properties:
\\ 1) $P_v$ is a bounded operator on $\mathbb{B}_{\theta,\varepsilon,\lambda}$ with spectral radius $r(P_v) \leq 1$;
\\ 2) If  $v\neq 0$, $r(P_v)<1$;
\\ 3) If $v=0$, $P=P_0$ satisfies $ P\mathbf{1}=\mathbf{1}$ , $\eta P =\eta $. The operator $\Q$ on $\mathbb{B}_{\theta,\varepsilon,\lambda}$ defined by $\Q f =P f -\eta (f) \mathbf{1} $ has spectral radius less than 1 and $\eta (\Q f)= 0$. In other words, $P$ is the direct sum of  the Identity  on $\mathbb{C}\mathbf{1}$ and of an operator on  \texttt{Ker} $\eta$ with spectral radius strictly less than 1.
\end{theorem}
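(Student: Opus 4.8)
The plan is to apply the Ionescu-Tulcea and Marinescu theorem \cite{Ionescu50AM} to $P_v$ on the pair of Banach spaces $\mathbb{C}_\theta \supset \mathbb{B}_{\theta,\varepsilon,\lambda}$, using the two estimates of Proposition \ref{rwgpro3-2} as the required functional inequalities. First I would verify boundedness: inequality \eqref{rwg3-1} with $n=1$ gives $|P_v f|_\theta \leq D|f|_\theta$, and \eqref{rwg3-2} with $n=1$ gives $[P_v f]_{\varepsilon,\lambda} \leq C_1\rho[f]_{\varepsilon,\lambda}+C_2\norm{v}^\varepsilon|f|_\theta$; together with $\lambda+\varepsilon\leq\theta$ these show $P_v$ maps $\mathbb{B}_{\theta,\varepsilon,\lambda}$ into itself continuously, and that $\mathbb{B}_{\theta,\varepsilon,\lambda}$ embeds compactly into $\mathbb{C}_\theta$ (this is an Arzel\`a--Ascoli type argument using the H\"older control at each scale, which is the standard hypothesis for Ionescu-Tulcea--Marinescu). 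The spectral radius estimate $r(P_v)\leq 1$ in item 1 follows from iterating \eqref{rwg3-2}: since $\rho<1$, the term $C_1\rho^n[f]_{\varepsilon,\lambda}$ vanishes and the norms $\|P_v^n f\|_{\theta,\varepsilon,\lambda}$ stay bounded (using \eqref{rwg3-1} for the $|\cdot|_\theta$ part), so $\limsup_n \|P_v^n\|^{1/n}\leq 1$.

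For item 2, the Ionescu-Tulcea--Marinescu theorem guarantees that the peripheral spectrum of $P_v$ on the unit circle consists of finitely many eigenvalues of finite multiplicity. Thus if $r(P_v)=1$ there would be an eigenvalue $z$ with $|z|=1$ and a nonzero eigenfunction $f\in\mathbb{B}_{\theta,\varepsilon,\lambda}$; but Proposition \ref{rwgpro3-3} (whose hypothesis $2\lambda+\varepsilon<\alpha$ is exactly what we assume here) shows this forces $f=0$, a contradiction. Hence $r(P_v)<1$ for $v\neq 0$.

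For item 3, at $v=0$ we have $P_0=P$ acting on functions, so $P\mathbf{1}=\mathbf{1}$ is immediate and $\eta P=\eta$ is the stationarity from Proposition \ref{rwgpro2-2}. The eigenvalue $1$ is simple: Proposition \ref{rwgpro3-2} applied at $v=0$ shows $P$ is quasi-compact on $\mathbb{B}_{\theta,\varepsilon,\lambda}$, so by the Ionescu-Tulcea--Marinescu decomposition the peripheral spectrum is finite; the argument of Proposition \ref{rwgpro3-3} adapted to $v=0$ (where the only possible peripheral eigenvalue is $z=1$, corresponding to constants, because an eigenfunction for $z=1$ must be $P$-harmonic and hence constant on $\texttt{supp}\,\eta$ by the maximum-principle/convexity argument, then constant everywhere by the convergence $P^n|f|\to\eta(|f|)$) isolates $1$ as a simple eigenvalue with no other spectrum on the unit circle. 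Writing $\pi_0 f=\eta(f)\mathbf{1}$ for the corresponding spectral projection and $\Q=P-\pi_0$, one checks $\pi_0\Q=\Q\pi_0=0$ and $\eta(\Q f)=0$ directly from $\eta P=\eta$; the quasi-compact decomposition then yields $r(\Q)<1$.

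The main obstacle is establishing the quasi-compactness and the spectral decomposition cleanly from the two inequalities, i.e.\ checking that the hypotheses of the Ionescu-Tulcea--Marinescu theorem are genuinely met — in particular the compact embedding $\mathbb{B}_{\theta,\varepsilon,\lambda}\hookrightarrow\mathbb{C}_\theta$ and the uniform bound $\sup_n|P_v^n|_\theta<\infty$ together with the contraction of the H\"older seminorm modulo the weaker norm. Once quasi-compactness is in hand, items 2 and 3 reduce to ruling out peripheral eigenvalues, which is supplied by Proposition \ref{rwgpro3-3} and its $v=0$ analogue, so the delicate point is really the functional-analytic verification rather than any further probabilistic estimate.
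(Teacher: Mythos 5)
Your proposal is correct, and for parts 1 and 2 it follows the paper's own argument exactly: power-boundedness of $P_v$ from Proposition \ref{rwgpro3-2} gives $r(P_v)\leq 1$, the relative compactness of bounded sets of $\mathbb{B}_{\theta,\varepsilon,\lambda}$ in $\mathbb{C}_\theta$ together with the Doeblin--Fortet type inequality make the Ionescu-Tulcea--Marinescu theorem applicable, and a unimodular eigenvalue is then ruled out for $v\neq 0$ by Proposition \ref{rwgpro3-3}. Where you genuinely diverge is part 3. The paper does not invoke quasi-compactness or any peripheral-spectrum classification at $v=0$; it exploits the fact that inequality \eqref{rwg3-2} becomes a pure seminorm contraction when $v=0$ (the term $C_2\norm{v}^{\varepsilon}|f|_\theta$ vanishes), so $[P^{n_0}f]_{\varepsilon,\lambda}\leq\rho_1[f]_{\varepsilon,\lambda}$ with $\rho_1<1$ for some $n_0$. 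It then shows that on $\texttt{Ker}\,\eta$ the seminorm $[\cdot]_{\varepsilon,\lambda}$ is a norm equivalent to $\|\cdot\|_{\theta,\varepsilon,\lambda}$ (using $\eta(f)=0$, $\lambda+\varepsilon\leq\theta$ and $\int\norm{y}^{\lambda+\varepsilon}d\eta(y)<\infty$), decomposes $\mathbb{B}_{\theta,\varepsilon,\lambda}=\mathbb{C}\mathbf{1}\oplus\texttt{Ker}\,\eta$ into closed $P$-invariant subspaces, and reads off $r(\Q)\leq\rho_1^{1/n_0}<1$ directly from the contraction. Your route instead needs a $v=0$ analogue of Proposition \ref{rwgpro3-3}, namely that the only unimodular eigenvalue of $P$ is $1$ with eigenspace $\mathbb{C}\mathbf{1}$; your sketch of this is sound (boundedness of an eigenfunction via $|f|\leq P^n|f|\to\eta(|f|)$, then $z^n f(x)=\mathbb{E}[f(X_n^x)]\to\eta(f)$ forces $z=1$ and $f\equiv\eta(f)$), and power-boundedness excludes a Jordan block at $1$, so the spectral projection is indeed $\pi_0 f=\eta(f)\mathbf{1}$ and $r(\Q)<1$ follows. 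The trade-off: the paper's argument is more elementary (no compactness or spectral decomposition needed at $v=0$) and yields an explicit bound $\rho_1^{1/n_0}$ on $r(\Q)$; yours is the standard quasi-compactness route and identifies $\pi_0$ as a genuine spectral projection in one stroke, but at the cost of the extra Liouville-type lemma that the paper's direct argument avoids.
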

\begin{proof}[Proof of Theorem \ref{rwgTh3-4}]

Proposition \ref{rwgpro3-2}   implies that $P_v$ is a power-bounded operator on $\mathbb{B}_{\theta,\varepsilon,\lambda} $,    
hence assertion 1 follows. Since bounded subsets of $(\mathbb{B}_{\theta,\varepsilon,\lambda}, || \cdot||_{\theta,\varepsilon,\lambda} )$ are relatively compact in $(\mathbb{C}_\theta, |\cdot|_{\theta})$, the inequality in part 2 of Proposition \ref{rwgpro3-2} shows that we can apply the theorem of Ionescu-Tulcea  and Marinescu (see \cite{Ionescu50AM}) to $ P_v$. In particular, if for some $v\in V$, $r(P_v)=1$, there exists $f\in \mathbb{B}_{\theta,\varepsilon,\lambda}$ and $z\in \mathbb{C}, |z|=1$, $f\neq 0$ such that $P_v f=z f$. If $v\neq 0$, this contradicts Proposition \ref{rwgpro3-3}, hence assertion 2 follows.

If $v=0$, part 2 of Proposition \ref{rwgpro3-2} gives:
$ [P^{n_0} f]_{\varepsilon,\lambda} \leq \rho_1 [f]_{\varepsilon,\lambda}$
for some $n_0 \in \mathbb{N}, \rho_1\in [0,1[$. We show that $f\rightarrow [f]_{\varepsilon,\lambda}$ defines a norm  equivalent to $ f\rightarrow |f|_{\theta}$ on the subspace  $\texttt{Ker} \eta= \{f\in \mathbb{B}_{\theta,\varepsilon,\lambda} ; \eta(f)=0 \} $.
 Since $\eta(f)=0$, if $f\in \texttt{Ker} \eta$, the condition $[f]_{\varepsilon,\lambda}=0$ implies $f=0$. Hence $f\rightarrow [f]_{\varepsilon,\lambda}$ is a norm on $\texttt{Ker} \eta $,  which satisfies $[f]_{\varepsilon,\lambda} \leq ||f||_{\theta,\varepsilon,\lambda}$.
 Since $ \varepsilon\leq 1$ we have
 \begin{align*}
    |f(x)-f(y)| &\leq [f]_{\varepsilon,\lambda} \norm{x-y}^{\varepsilon} (1+\norm{x}^{\lambda})(1+\norm{y}^{\lambda})\\ &\leq 4 [f]_{\varepsilon,\lambda}(1+ \norm{x}^{\lambda+\varepsilon})(1+\norm{y}^{\lambda+\varepsilon}).
 \end{align*}
 Since $\lambda+\varepsilon<\theta<\alpha$, we have $1+\norm{x}^{\lambda+\varepsilon} \leq 2(1+\norm{x}^{\theta})$ and $\int \norm{y}^{\lambda+\varepsilon} d\eta (y)=D<\infty$.  Hence, using $\eta(f)=0$:
 $$|f(x)|\leq 8(1+D) [f]_{\varepsilon,\lambda} (1+\norm{x}^{\theta}),$$
 i.e. $|f|_\theta \leq 8 (1+D) [f]_{\varepsilon,\lambda}$. The equivalence of norms follows .

 We can write $\mathbb{B}_{\theta,\varepsilon,\lambda} = \mathbb{C}\mathbf{1} \oplus \texttt{Ker} \eta$.
 Since $P\mathbf{1}=\mathbf{1}$ and $\eta P=\eta$, the subspaces $\mathbb{C}\mathbf{1}$ and $\texttt{Ker} \eta$ are closed $P$-invariant subspaces of $\mathbb{B}_{\theta,\varepsilon,\lambda}$. Since $\Q\mathbf{1}=0$, $\Q$ can be identified with its restriction to $\texttt{Ker} \eta$. Then the inequality
 $[\Q^{n_0} f]_{\varepsilon,\lambda} \leq \rho_1 [f]_{\varepsilon,\lambda}$ and the equivalence of norms observed above imply
 \begin{equation*}
    r(\Q^{n_0}) \leq \rho_1, \quad r(\Q) \leq \rho_1^{1/n_0}<1.
 \end{equation*}
   \end{proof}

 The study of $P_{tv}$ for $t$ small and $v$ fixed is based on a theorem of Keller and Liverani(\cite{KellerLiverani99spectrum}), Proposition \ref{rwgpro3-2} and the following
easy lemma.
\begin{lemma}\label{rwglem3-5}
If $\lambda+2\varepsilon <\theta <\alpha$, $\delta\leq \varepsilon$, there exists $C>0$ such that for any $\gamma \in [\lambda+2\varepsilon, \theta]$
and $v,w\in V$:
$$|(P_v-P_w)f|_{\gamma}\leq C\norm{v-w}^{\delta} ||f||_{\theta,\varepsilon, \lambda}.$$
\end{lemma}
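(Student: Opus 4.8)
The plan is to estimate $(P_v-P_w)f(x)$ pointwise and then divide by $(1+\norm{x})^\gamma$. Starting from the formula of Proposition \ref{rwgpro3-1} with $n=1$,
\[
(P_v-P_w)f(x)=\mathbb{E}[(\mathcal{X}_v(X_1^x)-\mathcal{X}_w(X_1^x))f(X_1^x)],\qquad X_1^x=Mx+Q,
\]
I would first record the elementary bound $|\mathcal{X}_v(y)-\mathcal{X}_w(y)|=|e^{i\ipro{v-w}{y}}-1|\le 2^{1-\delta}|\ipro{v-w}{y}|^\delta\le 2\norm{v-w}^\delta\norm{y}^\delta$, valid since $\delta\le\varepsilon<1$. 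This immediately extracts the desired factor $\norm{v-w}^\delta$; the genuine difficulty is to control the growth in $x$ of the remaining expectation by $(1+\norm{x})^\gamma$, with $\gamma$ allowed to be as small as $\lambda+2\varepsilon$. Here the crude estimate $|f(X_1^x)|\le|f|_\theta(1+\norm{X_1^x})^\theta$ is useless, since it would force the output growth exponent to be $\theta+\delta>\theta\ge\gamma$.

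To gain the smaller exponent I would split $f$ into its value at the origin and its oscillation, $f(X_1^x)=f(0)+(f(X_1^x)-f(0))$, obtaining two terms. In the first, $|f(0)|\le|f|_\theta$ is a genuine constant, and the oscillatory factor alone contributes $\mathbb{E}\,|\mathcal{X}_v(X_1^x)-\mathcal{X}_w(X_1^x)|\le 2\norm{v-w}^\delta\,\mathbb{E}\norm{X_1^x}^\delta$, which grows only like $(1+\norm{x})^\delta$. In the second, the Hölder seminorm yields $|f(X_1^x)-f(0)|\le[f]_{\varepsilon,\lambda}\norm{X_1^x}^\varepsilon(1+\norm{X_1^x})^\lambda\le[f]_{\varepsilon,\lambda}(1+\norm{X_1^x})^{\lambda+\varepsilon}$; after multiplying by the factor $\norm{X_1^x}^\delta$ coming from the exponential, one is left with the moment $\mathbb{E}(1+\norm{X_1^x})^{\lambda+\varepsilon+\delta}$, whose exponent is at most $\lambda+2\varepsilon$ because $\delta\le\varepsilon$.

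The last ingredient is the one-step moment estimate $\mathbb{E}[(1+\norm{X_1^x})^p]\le C_p(1+\norm{x})^p$ for every $p\le\lambda+2\varepsilon<\theta<\alpha$. This follows from $1+\norm{X_1^x}\le(1+\mnorm{M})(1+\norm{Q})(1+\norm{x})$ together with Minkowski's inequality (or subadditivity of $t\mapsto t^p$ when $p\le 1$), the required moments of $\mnorm{M}$ and $\norm{Q}$ being finite by condition $C_3$ since $\lambda+2\varepsilon<\alpha$. Applying it with $p=\delta$ and with $p=\lambda+\varepsilon+\delta$, and using that both $\delta$ and $\lambda+\varepsilon+\delta$ are $\le\lambda+2\varepsilon\le\gamma$, each of the two terms, after division by $(1+\norm{x})^\gamma$, is bounded respectively by $C\norm{v-w}^\delta|f|_\theta$ and $C\norm{v-w}^\delta[f]_{\varepsilon,\lambda}$; summing and taking the supremum over $x$ gives the claim with a constant uniform in $\gamma$, $v$, $w$. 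I expect the main obstacle to be exactly this gain of regularity, i.e.\ pushing the output growth down to $\gamma\le\theta$ rather than the naive $\theta+\delta$: isolating $f(0)$ is what makes the constant part decay like $(1+\norm{x})^\delta$ and confines the variation part to the Hölder exponent $\lambda+\varepsilon$, both comfortably below $\gamma$, while the $\norm{v-w}^\delta$ smallness is already built into the exponential inequality.
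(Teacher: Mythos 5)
Your proof is correct and takes essentially the same route as the paper's: the paper likewise bounds $|e^{i\ipro{v}{hx}}-e^{i\ipro{w}{hx}}|\le 2\norm{v-w}^{\delta}\norm{hx}^{\delta}$, splits $f(hx)=f(0)+\bigl(f(hx)-f(0)\bigr)$ so that the constant part carries only the growth exponent $\delta$ and the oscillation part the exponent $\lambda+\varepsilon+\delta\le\lambda+2\varepsilon\le\gamma$, and absorbs the remaining integrals into a constant. The only repair needed is in your justification of the one-step moment bound: since $M$ and $Q$ are \emph{not} assumed independent, the product inequality $1+\norm{X_1^x}\le(1+\mnorm{M})(1+\norm{Q})(1+\norm{x})$ followed by Minkowski does not control $\mathbb{E}\bigl[(1+\mnorm{M})^{p}(1+\norm{Q})^{p}\bigr]$ (Cauchy--Schwarz would require moments of order $2p$ of $\mnorm{M}$ and $\norm{Q}$, which condition $C_3$ need not supply when $p\le\lambda+2\varepsilon$ is close to $\alpha$); use instead the additive bound $1+\norm{X_1^x}\le(1+\mnorm{M}+\norm{Q})(1+\norm{x})$, for which $\mathbb{E}\bigl[(1+\mnorm{M}+\norm{Q})^{p}\bigr]<\infty$ follows from $C_3$ via Minkowski when $p\ge 1$ or subadditivity of $t\mapsto t^{p}$ when $p\le 1$. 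With that one-line change your argument coincides with the paper's proof, which writes the analogous constant as a supremum over $x$ and leaves its finiteness implicit.
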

\begin{proof}
We observe that
\begin{multline}
    |(P_v-P_w) f(x)|\leq \int |e^{i\ipro{v}{hx}} - e^{i\ipro{w}{hx}}| |f(hx)| d\mu(h)
\\ \leq 2 \norm{v-w}^{\delta} \int\norm{hx}^{\delta}|f(hx)-f(0)| d\mu(h) +  2 \norm{v-w}^{\delta} |f(0)| \int \norm{hx}^{\delta} d\mu(h) \\
 \leq    2 \norm{v-w}^{\delta}[f]_{\varepsilon,\lambda}\int\norm{hx}^{\delta+\varepsilon}(1+\norm{hx})^{\lambda} d\mu(h) + 2 \norm{v-w}^{\delta} |f|_{\theta} \int\norm{hx}^\delta d\mu(h).
\end{multline}

Therefore if we take $C= \sup_x \{ 2 \int [\norm{hx}^{\delta+\varepsilon}(1+\norm{hx} )^{\lambda} + \norm{hx}^{\delta} ] d\mu(h)/(1+\norm{x})^{\lambda+2\varepsilon}  \}$, then
\begin{equation*}
  |(P_v-P_w)f|_{\gamma}= \sup_x |(P_v-P_w)f(x)/ (1+\norm{x})^{\gamma}|\leq C\norm{v-w}^{\delta} ||f||_{\theta,\varepsilon, \lambda}.
\end{equation*}
\end{proof}

 In view of Theorem  \ref{rwgTh3-4} and Lemma \ref{rwglem3-5}, we may use the perturbation theorem of \cite{KellerLiverani99spectrum} for the family $P_{tv}$, hence as in \cite{GuivarchLePage08spectral,BuraczewskiGuivarch10PTRF} we have the following
 \begin{pro}\label{rwgpro3-6}
 Assume $\varepsilon<1$, $\lambda+2\varepsilon <\theta \leq 2\lambda <2\lambda +\varepsilon <\alpha$, $v\in V$.
 Then there exists $t_0>0$, $\delta>0$, $\rho<1-\delta$ such that for every $t\in \mathbb{R}$ with $|t|\leq t_0$:
 \\ a) The spectrum of $P_{tv}$ acting on $\mathbb{B}_{\theta,\varepsilon,\lambda}$  is contained  in $ \mathfrak{ S}=\{ z\in \mathbb{C} ; |z|\leq \rho \} \bigcup \{ z\in \mathbb{C} ; |z-1|<\delta\}$.
 \\ b) The set $\sigma (P_{tv} ) \bigcap \{z\in \mathbb{C}; |z-1|\leq \delta \}$ consists of exactly  one eigenvalue $k(tv)$, the corresponding eigenspace is one dimensional and $\lim_{t\rightarrow 0} k(tv)=1$.
 \\ c) If $\pi_{tv}$ is the spectral projection on the above eigenspace of $P_{tv}$,  there exists an operator $\Q_{tv}$ with $r(\Q_{tv}) \leq \rho$, $\pi_{tv} \Q_{tv}= \Q_{tv}\pi_{tv}=0$ and for every $n\in \mathbb{N}$, $f\in \mathbb{B}_{\theta,\varepsilon,\lambda}$,
 $$ P_{tv}^n f = k^n(tv)\pi_{tv}(f)+ \Q_{tv}^n(f) .$$
 Furthermore $k(tv)$, $\pi_{tv}$, $\Q_{tv}$ depends continuously on $t$.\\
d) For any $z$ in the complement of $\mathfrak{S}$:
$$||(z-P_{tv})^{-1}f||_{\theta, \varepsilon, \lambda}
\leq D ||f||_{\theta, \varepsilon, \lambda}$$
for some constant $D$ independent of $t$.
 \end{pro}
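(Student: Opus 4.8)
The plan is to regard $\{P_{tv}\}_{|t|\le t_0}$ as a perturbation of $P_0=P$ on the pair of norms $(\,||\cdot||_{\theta,\varepsilon,\lambda}\,,\,|\cdot|_\theta\,)$, to check that the two hypotheses of the Keller--Liverani theorem \cite{KellerLiverani99spectrum} are met, and then to read off assertions a)--d) from its conclusions together with the spectral decomposition of $P_0$ furnished by Theorem \ref{rwgTh3-4}.

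For the first hypothesis, a uniform Lasota--Yorke (Doeblin--Fortet) inequality, I would add the two estimates of Proposition \ref{rwgpro3-2}. Writing $\rho_\ast\in[0,1)$ for the rate called $\rho$ there, this gives for all $n$ and $f$
$$||P_{tv}^n f||_{\theta,\varepsilon,\lambda}=|P_{tv}^n f|_\theta+[P_{tv}^n f]_{\varepsilon,\lambda}\le C_1\rho_\ast^{\,n}\,||f||_{\theta,\varepsilon,\lambda}+\big(D+C_2\norm{tv}^\varepsilon\big)|f|_\theta,$$
and, since $\norm{tv}^\varepsilon\le\norm{v}^\varepsilon$ for $|t|\le 1$, both the rate $\rho_\ast<1$ and the multiplicative constants are uniform over the family; the weak norm is moreover dominated by the strong one, $|\cdot|_\theta\le||\cdot||_{\theta,\varepsilon,\lambda}$. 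For the second hypothesis, smallness of the perturbation measured in the weak norm, I would invoke Lemma \ref{rwglem3-5} with $\gamma=\theta$ (permissible since $\lambda+2\varepsilon\le\theta$), which yields $|(P_{tv}-P_0)f|_\theta\le C\,|t|^\delta\norm{v}^\delta\,||f||_{\theta,\varepsilon,\lambda}$ and hence $\sup_{||f||_{\theta,\varepsilon,\lambda}\le1}|(P_{tv}-P_0)f|_\theta\to0$ as $t\to0$. The point that makes \cite{KellerLiverani99spectrum} applicable is that both estimates are controlled by the \emph{same} weak norm $|\cdot|_\theta$.

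Next I would fix the spectral picture of $P_0$. By Theorem \ref{rwgTh3-4} one has $\sigma(P_0)=\{1\}\cup\sigma(\mathcal{Q})$, with $1$ a simple eigenvalue carrying the rank-one projection $\pi_0 f=\eta(f)\mathbf{1}$ and $r(\mathcal{Q})<1$. I would then choose $\rho$ with $\max\{r(\mathcal{Q}),\rho_\ast\}<\rho<1$ and $\delta\in(0,1-\rho)$, so that the discs $\{|z|\le\rho\}$ and $\{|z-1|<\delta\}$ are disjoint and $1$ is the only point of $\sigma(P_0)$ lying in $\{|z|>\rho\}$. The Keller--Liverani theorem then delivers, for $|t|\le t_0$, uniform bounds on the resolvent $(z-P_{tv})^{-1}$ over the complement of $\mathfrak{S}$ (the neighbourhood of infinity being treated directly via the uniform operator bound $\sup_{|t|\le t_0}||P_{tv}||_{\theta,\varepsilon,\lambda}<\infty$ coming from the inequality above with $n=1$); this is assertion d), and together with $r(P_{tv})\le1$ from Theorem \ref{rwgTh3-4} it forces $\sigma(P_{tv})\subset\mathfrak{S}$, which is a). The same theorem gives convergence of the spectral projection $\pi_{tv}=\tfrac1{2\pi i}\oint_{|z-1|=\delta}(z-P_{tv})^{-1}\,dz$ towards $\pi_0$ with preservation of its rank, so that the part of $\sigma(P_{tv})$ inside $\{|z-1|<\delta\}$ reduces to a single simple eigenvalue $k(tv)$ with $k(tv)\to1$, which is b). Finally, setting $\mathcal{Q}_{tv}=P_{tv}(I-\pi_{tv})$ produces the commuting decomposition $P_{tv}^n=k^n(tv)\pi_{tv}+\mathcal{Q}_{tv}^n$ with $\pi_{tv}\mathcal{Q}_{tv}=\mathcal{Q}_{tv}\pi_{tv}=0$ and $r(\mathcal{Q}_{tv})\le\rho$, which is c); continuity in $t$ of $k(tv),\pi_{tv},\mathcal{Q}_{tv}$ follows from the continuity statements of \cite{KellerLiverani99spectrum}, as in \cite{GuivarchLePage08spectral,BuraczewskiGuivarch10PTRF}.

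The main obstacle is not a single computation but the bookkeeping needed to cast the hypotheses of \cite{KellerLiverani99spectrum} into usable form: one must express the Lasota--Yorke inequality of Proposition \ref{rwgpro3-2} and the weak continuity of Lemma \ref{rwglem3-5} relative to one and the same pair of norms, with rate and constants uniform over $\{P_{tv}\}_{|t|\le t_0}$, and one must use rank stability to guarantee that the perturbed dominant spectrum is a single simple eigenvalue rather than a cluster, kept apart from the rest of the spectrum by the gap $\rho<1-\delta\le|k(tv)|$.
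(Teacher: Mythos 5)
Your proposal is correct and follows essentially the same route as the paper, which proves this proposition simply by invoking the Keller--Liverani perturbation theorem on the strength of Theorem \ref{rwgTh3-4} (spectral structure of $P_0$ and the uniform bounds of Proposition \ref{rwgpro3-2}) and Lemma \ref{rwglem3-5} (smallness of $P_{tv}-P_0$ in the weak norm $|\cdot|_\theta$), exactly the two ingredients you verify. Your write-up merely makes explicit the bookkeeping (uniform Lasota--Yorke inequality, choice of $\rho$ and $\delta$, rank stability of the spectral projection) that the paper leaves to the references \cite{KellerLiverani99spectrum,GuivarchLePage08spectral,BuraczewskiGuivarch10PTRF}.
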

 This statement allows us to complete the proof of Theorem \ref{rwgTh1-6}.
For $t$ small define the function $g_{tv}= \pi_{tv} (\mathbf{1})$. Hence $$ P_{tv}g_{tv}=k(tv)g_{tv}.$$ Then for any function $f$ in $ \mathbb{B}_{\theta,\varepsilon,\lambda}$ we define $\mathcal{E}_{tv}(f) \in \mathbb{C}$ by $\pi_{tv}(f)= \mathcal{E}_{tv}(f) g_{tv}$.

We will be able to get the asymptotic expression of $k(tv)$ for $t$ small through the use of a new family of operators $T_{t,v}$
 on $\mathbb{B}_{\theta,\varepsilon,\lambda}$ defined  by
 $$T_{t,v}f(x)= \int \mathcal{X}_{tb}(x+v) f(g^*(x+v)) d\mu(h). $$
Then $T_v=T_{0,v}$, $T_v \eta_v=\eta_v$, where $\eta_v$ is the stationary measure for the Markov chain $W_n$. It turns out that
the analogues of Theorem \ref{rwgTh3-4}, Proposition \ref{rwgpro3-6}, are valid for the family $T_{t,v}$.
Therefore, for small values of $t$, the spectrum of $T_{t,v}$ in some neighborhood of 1 consists of only one point $k^*(t,v)$ which satisfies
$|k^*(t,v)|= r(T_{t,v})$.  We denote by $T^*_{t,v}$ the dual operator on $\mathbb{B}^*_{\theta,\varepsilon,\lambda}$ of $T_{t,v}$.
 One observes that for any $ v\in V$, the function $\mathcal{X}_v$ belongs to $\mathbb{B}_{\theta,\varepsilon,\lambda}$ and $|| \mathcal{X}_{v}||_{\theta,\varepsilon,\lambda}\leq 1+2\norm{v}^{\varepsilon}$. It follows that for any $ \mathcal{E} \in \mathbb{B}_{\theta,\varepsilon,\lambda}^*$, $$\widehat{\mathcal{E}} (v):=\mathcal{E}(\mathcal{X}_{v} )$$ plays the role of a Fourier transform for $ \mathcal{E}$ and $$|\widehat{\mathcal{E}}(v)|\leq (1+2\norm{v}^\varepsilon) ||{\mathcal{E}}||_{\theta,\varepsilon,\lambda}.$$
The following relation between $P_{tv}$ and $T_{t,v}$ plays an essential role in the calculation of the asymptotic expansion for $k(tv)$.
\begin{pro}\label{rwgpro3-8}
For any $t\in \mathbb{R}$, $v\in V\backslash \{0\}$, $\mathcal{E} \in \mathbb{B}^*_{\theta,\varepsilon,\lambda}$,
$$ P_{tv}(\widehat{\mathcal{E}}\circ t )= (\widehat{T_{t,v}^* \mathcal{E}}) \circ t.$$
\end{pro}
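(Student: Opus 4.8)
The plan is to verify the identity by a direct computation: expand both sides from the definitions, match the two exponentials coming from the translation part and the linear part, and invoke the adjoint identity $\ipro{x}{g^{*}z}=\ipro{gx}{z}$. The only genuinely delicate point is the commutation of the continuous linear functional $\mathcal{E}$ with the integration against $\mu$.

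First I would unwind the left-hand side. Writing $h=(b,g)$ so that $hx=gx+b$, and recalling $P_{tv}f(x)=\int \mathcal{X}_{tv}(gx+b)\,f(gx+b)\,d\mu(h)$ together with $(\widehat{\mathcal{E}}\circ t)(y)=\widehat{\mathcal{E}}(ty)=\mathcal{E}(\mathcal{X}_{ty})$, one obtains
$$ P_{tv}(\widehat{\mathcal{E}}\circ t)(x)=\int \mathcal{X}_{tv}(gx+b)\,\mathcal{E}\big(\mathcal{X}_{t(gx+b)}\big)\,d\mu(h). $$
Next I would unwind the right-hand side. By definition of the dual operator, $(\widehat{T_{t,v}^{*}\mathcal{E}})(tx)=(T_{t,v}^{*}\mathcal{E})(\mathcal{X}_{tx})=\mathcal{E}(T_{t,v}\mathcal{X}_{tx})$, so it remains to compute $T_{t,v}\mathcal{X}_{tx}$. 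Using $T_{t,v}f(y)=\int \mathcal{X}_{tb}(y+v)\,f(g^{*}(y+v))\,d\mu(h)$ and the adjoint identity $\ipro{tx}{g^{*}(y+v)}=t\ipro{gx}{y+v}$, the product of the two exponentials becomes $e^{it\ipro{gx+b}{y+v}}$, which factors through $e^{it\ipro{gx+b}{v}}=\mathcal{X}_{tv}(gx+b)$ and $e^{it\ipro{gx+b}{y}}=\mathcal{X}_{t(gx+b)}(y)$, giving
$$ T_{t,v}\mathcal{X}_{tx}(y)=\int \mathcal{X}_{tv}(gx+b)\,\mathcal{X}_{t(gx+b)}(y)\,d\mu(h). $$
Applying $\mathcal{E}$ in the variable $y$ and moving it inside the $\mu$-integral then produces exactly the expression found for the left-hand side, which concludes the proof.

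The step I expect to be the only real obstacle is the justification of pulling $\mathcal{E}$ through the integral in the last display, that is the identity $\mathcal{E}\big(\int(\cdot)\,d\mu\big)=\int \mathcal{E}(\cdot)\,d\mu$. This is a Bochner-integral commutation for the continuous functional $\mathcal{E}\in\mathbb{B}^{*}_{\theta,\varepsilon,\lambda}$, and it is legitimate once $h\mapsto \mathcal{X}_{t(gx+b)}$ is $\mu$-integrable as a $\mathbb{B}_{\theta,\varepsilon,\lambda}$-valued map. This integrability follows from the norm estimate $||\mathcal{X}_{w}||_{\theta,\varepsilon,\lambda}\leq 1+2\norm{w}^{\varepsilon}$ recalled above, combined with $\varepsilon<\alpha$ and the moment bounds $\mathbb{E}(v(M)^{\alpha+\delta}+\norm{Q}^{\alpha+\delta})<\infty$ of condition $C_{3}$, which give $\int \norm{t(gx+b)}^{\varepsilon}\,d\mu(h)<\infty$ for each fixed $x$.
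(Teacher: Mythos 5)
Your proof is correct and follows essentially the same route as the paper's: a direct expansion of both sides, the commutation identity $\ipro{tx}{g^*(y+v)}=\ipro{t(gx)}{y+v}$, and the factoring of the exponentials $\mathcal{X}_{tv}(gx+b)\,\mathcal{X}_{t(gx+b)}(y)=\mathcal{X}_{y+v}(t(gx+b))$, the only difference being that the paper runs the computation starting from $P_{tv}(\widehat{\mathcal{E}}\circ t)$ while you start from $\mathcal{E}(T_{t,v}\mathcal{X}_{tx})$. Your explicit Bochner-integral justification for pulling $\mathcal{E}$ through the $\mu$-integral is a sound addition; the paper performs that exchange silently, writing $\mathcal{E}$ formally as $d\mathcal{E}(y)$ and interchanging the two integrations.
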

\begin{proof}
As in \cite{GuivarchLePage08spectral}, the proof is based on the definitions of $\mathcal{X}_{x}$, $T_{t,v}$ and the fact that the map $x\rightarrow tx$ commute with $x\rightarrow gx$ for $g\in G$. However, in view of its role here, we give it explicitly.
Since $x\rightarrow gx$ ($g\in G$) and $x\rightarrow tx$ commute:
\begin{eqnarray*}
 T_{t,v}(\mathcal{X}_{tx} )(y)  &=&  \int \mathcal{X}_{tb} (y+v) \mathcal{X}_{tx}(g^*(y+v)) d\mu(h)\\
  &=& \int \mathcal{X}_{tb} (y+v) \mathcal{X}_{t(gx)}(y+v) d\mu(h) =\int \mathcal{X}_{t(gx+b)}(y+v)d\mu(h); \\
   P_{tv}(\widehat{\mathcal{E}}\circ t) (x)&=& \iint\mathcal{X}_{tv}(gx+b) \mathcal{X}_y (t(gx+b))d\mathcal{E}(y) d\mu(h)  \\
   &=&\iint \mathcal{X}_{y+v} (t(gx+b))d \mathcal{E}(y) d\mu(h)=  \mathcal{E}(T_{t,v}(\mathcal{X}_{tx}))= \widehat{T_{t,v}^*\mathcal{E}} (tx).
\end{eqnarray*}
\end{proof}
As in \cite{GuivarchLePage08spectral}, this proposition  allows us to construct an eigenfunction of $P_{tv}$  from an eigenfunctional  $\eta_{t,v}$ of $T_{t,v}$, hence  in Section 4 it will lead to the expansion of $k(tv)$ at $t=0$, using the following result (see \cite[Corollary 2]{GuivarchLePage08spectral}):
\begin{cor}\label{rwgpro3-9}
Assume $\eta_{t,v} \in \mathbb{B}^*_{\theta,\varepsilon,\lambda}$ satisfies $$ T_{t,v}^*\eta_{t,v}=k^*(t,v)\eta_{t,v},\quad  \eta_{t,v}(\mathbf{1})={1}.$$
If $\varepsilon<1/2$, there exists $t_3>0$ such that if $|t|\leq t_3$, the function $$\psi_{tv}= \widehat{\eta_{t,v}}\circ t$$ is the unique  normalized eigenfunction of $P_{tv}$ (with value $1$ at $0$) acting on  $\mathbb{B}_{\theta,\varepsilon,\lambda}$ and corresponding to the eigenvalue $ k(tv)$, i.e.
$$P_{tv}(\psi_{tv})=k(tv)\psi_{tv}, \quad \psi_{tv}(0)=1 .$$
Moreover $k(tv)=k^*(t,v)$ and
\begin{equation*}
    (k(tv)-1)\eta( \psi_{tv})=\eta (\psi_{tv} (\mathcal{X}_{tv}-1)).
\end{equation*}
\end{cor}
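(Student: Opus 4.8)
The plan is to produce the eigenfunction $\psi_{tv}$ directly from the intertwining relation of Proposition \ref{rwgpro3-8}, to identify its eigenvalue with $k(tv)$ by means of the perturbative description of $P_{tv}$ in Proposition \ref{rwgpro3-6}, and finally to read off the scalar identity from the $\eta$-stationarity of $P$. First I would apply Proposition \ref{rwgpro3-8} with $\mathcal{E}=\eta_{t,v}$. Since $T_{t,v}^*\eta_{t,v}=k^*(t,v)\eta_{t,v}$ and $\mathcal{E}\mapsto\widehat{\mathcal{E}}$ is linear, the relation $P_{tv}(\widehat{\mathcal{E}}\circ t)=(\widehat{T_{t,v}^*\mathcal{E}})\circ t$ gives at once $P_{tv}\psi_{tv}=k^*(t,v)\psi_{tv}$ for $\psi_{tv}=\widehat{\eta_{t,v}}\circ t$. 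Evaluating at $0$ yields $\psi_{tv}(0)=\widehat{\eta_{t,v}}(0)=\eta_{t,v}(\mathbf{1})=1$, so $\psi_{tv}\neq 0$ and is already normalized.

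To identify the eigenvalue I would invoke the analogues of Theorem \ref{rwgTh3-4} and Proposition \ref{rwgpro3-6} for the family $T_{t,v}$, valid since $T_{t,v}$ obeys the same functional inequalities: for $t$ small the only spectral value of $T_{t,v}$ near $1$ is the simple eigenvalue $k^*(t,v)$, and $k^*(t,v)\to 1$ as $t\to 0$. Hence for $\abs{t}$ small enough $k^*(t,v)$ lies in the disk $\{\abs{z-1}<\delta\}$ of Proposition \ref{rwgpro3-6}, where $P_{tv}$ has a single simple eigenvalue $k(tv)$ with one-dimensional eigenspace. Once $\psi_{tv}$ is known to belong to $\mathbb{B}_{\theta,\varepsilon,\lambda}$, being a nonzero eigenfunction of $P_{tv}$ with eigenvalue near $1$ forces $k^*(t,v)=k(tv)$ and makes $\psi_{tv}$ span the eigenspace; one-dimensionality then gives uniqueness of the normalized eigenfunction, and continuity in $t$ of $k(tv),\pi_{tv},\mathcal{Q}_{tv}$ is supplied by Proposition \ref{rwgpro3-6}.

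The main obstacle is exactly the verification that $\psi_{tv}\in\mathbb{B}_{\theta,\varepsilon,\lambda}$, i.e. finiteness of $[\psi_{tv}]_{\varepsilon,\lambda}$, which is where the hypothesis $\varepsilon<1/2$ enters as in \cite[Corollary 2]{GuivarchLePage08spectral}. Writing $\psi_{tv}(x)-\psi_{tv}(y)=\eta_{t,v}(\mathcal{X}_{tx}-\mathcal{X}_{ty})$ and bounding by $\|\eta_{t,v}\|\,\|\mathcal{X}_{tx}-\mathcal{X}_{ty}\|_{\theta,\varepsilon,\lambda}$, the task reduces to an estimate for differences of characters. Using $\abs{e^{iu}-1}\le 2\abs{u}^{\varepsilon}$ and the splitting $\mathcal{X}_{tx}(z)-\mathcal{X}_{tx}(w)-\mathcal{X}_{ty}(z)+\mathcal{X}_{ty}(w)=(\mathcal{X}_{t(x-y)}(w)-1)(\mathcal{X}_{tx}(z-w)-1)+\mathcal{X}_{ty}(w)(\mathcal{X}_{t(x-y)}(z-w)-1)$, one extracts a factor $\norm{x-y}^{\varepsilon}$ matching the denominator of $[\,\cdot\,]_{\varepsilon,\lambda}$, at the cost of auxiliary powers $\norm{x}^{\varepsilon},\norm{z}^{\varepsilon},\norm{w}^{\varepsilon}$ that are absorbed by the weights $(1+\norm{\cdot})^{\lambda}$ because the constraints of Proposition \ref{rwgpro3-6} give $2\varepsilon<\lambda$; the sup-part $\abs{\mathcal{X}_{tx}-\mathcal{X}_{ty}}_\theta$ is handled likewise since $\varepsilon<\theta$. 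This character estimate, together with $\varepsilon<1/2$ as required by the cited corollary, is the only delicate point of the argument.

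Finally, the scalar identity follows by pairing the eigenvalue equation with $\eta$. Since $P_{tv}\psi_{tv}=P(\mathcal{X}_{tv}\psi_{tv})$ and $\eta P=\eta$, integrating $P_{tv}\psi_{tv}=k(tv)\psi_{tv}$ against $\eta$ gives $\eta(\mathcal{X}_{tv}\psi_{tv})=k(tv)\,\eta(\psi_{tv})$, which rearranges to $(k(tv)-1)\eta(\psi_{tv})=\eta(\psi_{tv}(\mathcal{X}_{tv}-1))$, as claimed.
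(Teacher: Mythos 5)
Your proposal is correct and follows exactly the route the paper intends: it invokes the intertwining relation of Proposition \ref{rwgpro3-8} with $\mathcal{E}=\eta_{t,v}$ to produce the eigenfunction, the spectral picture of Proposition \ref{rwgpro3-6} (and its analogue for $T_{t,v}$) to identify $k(tv)=k^*(t,v)$ and obtain uniqueness, and $\eta P=\eta$ for the scalar identity. The paper itself gives no detailed argument but defers to \cite[Corollary 2]{GuivarchLePage08spectral}; your writeup correctly supplies those details, including the membership of $\psi_{tv}$ in $\mathbb{B}_{\theta,\varepsilon,\lambda}$ via the character-difference estimate, which is where $\varepsilon<1/2$ and $2\varepsilon<\lambda$ enter.
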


\begin{remark}\label{rwgre3-9}In particular, using assertion c of Proposition \ref{rwgpro3-6}, we see that $ \lim_{t\rightarrow 0 } || \psi_{tv}-1||_{\theta,\varepsilon,\lambda}=0$. Since $\eta$ defines an element of $ \mathbb{B}_{\theta,\varepsilon,\lambda}$, we have $ \lim_{t\rightarrow 0} \eta(\psi_{t,v})=1$.

\end{remark}
\section{Asymptotic expansion of eigenvalues in terms of tails and the proof of Theorem \ref{rwgTh1-5} }

Using the techniques of \cite{KellerLiverani99spectrum,LePage89} and the above results, we deduce from Proposition \ref{rwgpro3-6} the following result (see \cite[Proposition 3.18]{BuraczewskiGuivarch10PTRF}):
\begin{pro}\label{rwgpro3-7}
Assume additionally that $\lambda+ 3\varepsilon<\theta $, $2\lambda+3 \varepsilon< \alpha $. Then the  identity embedding of $\mathbb{B}_{\theta,\varepsilon,\lambda}$ into $ \mathbb{B}_{\theta,\varepsilon, \lambda+\varepsilon}$ is continuous and the decomposition $P_{tv}=k(tv) \pi_{tv}+ \Q_{tv}$ coincide on both spaces. Moreover, there exist constants $D>0$ and $ t_1>0$ such that for $|t|\leq t_1$,
 we have if $\norm{v}\leq 1$:
 \\ (i) $ ||(P_{tv}-P)f||_{\theta,\varepsilon,\lambda+\varepsilon} \leq D|t|^{\varepsilon} ||f||_{\theta,\varepsilon,\lambda}$;
 \\ (ii) $ ||(k(tv)\pi_{tv}-\pi_0)f||_{\theta,\varepsilon,\lambda+\varepsilon}  \leq D|t|^{\varepsilon} ||f||_{\theta,\varepsilon,\lambda}$;
 \\ (iii) $ ||(\pi_{tv}-\pi_0)f||_{\theta,\varepsilon,\lambda+\varepsilon}\leq D|t|^{\varepsilon}||f||_{\theta,\varepsilon,\lambda}$ ;
\\ (iv) $ ||(\Q_{tv}-\Q) f||_{\theta,\varepsilon,\lambda} \leq D|t|^{\varepsilon}||f||_{\theta,\varepsilon,\lambda}$
\\ (v) $||g_{tv}-\mathbf{1}||_{\theta,\varepsilon,\lambda}\leq D|t|^\varepsilon$;
\\(vi) $ |k(tv)-1|\leq D|t|^\varepsilon$;
\\ (vii) $\mathcal{E}_{tv}$ is a bounded functional on $\mathbb{B}_{\theta,\varepsilon,\lambda}$ with norm at most $D|t|^{\varepsilon}$.
\end{pro}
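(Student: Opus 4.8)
The plan is to transfer the perturbation analysis of Proposition \ref{rwgpro3-6} from $\mathbb{B}_{\theta,\varepsilon,\lambda}$ to the slightly larger space $\mathbb{B}_{\theta,\varepsilon,\lambda+\varepsilon}$, the extra unit of growth exponent being the price paid for the gain $|t|^\varepsilon$. First I would record that the embedding is continuous: raising the growth exponent only enlarges the denominator of the Hölder seminorm, so $[f]_{\varepsilon,\lambda+\varepsilon}\leq [f]_{\varepsilon,\lambda}$ while $|f|_\theta$ is unchanged, whence $\norm{f}_{\theta,\varepsilon,\lambda+\varepsilon}\leq\norm{f}_{\theta,\varepsilon,\lambda}$. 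The strengthened hypotheses $\lambda+3\varepsilon<\theta$ and $2\lambda+3\varepsilon<\alpha$ are precisely the inequalities $(\lambda+\varepsilon)+2\varepsilon<\theta$, $\theta\leq 2(\lambda+\varepsilon)$, $2(\lambda+\varepsilon)+\varepsilon<\alpha$ needed to apply Theorem \ref{rwgTh3-4}, Lemma \ref{rwglem3-5} and Proposition \ref{rwgpro3-6} verbatim with $\lambda$ replaced by $\lambda+\varepsilon$. Since the dominant eigenvalue $k(tv)$ is a scalar and the eigenfunction $g_{tv}=\pi_{tv}\mathbf{1}$ is a concrete element of both spaces, the one-dimensional spectral data furnished by Proposition \ref{rwgpro3-6} on the two spaces must coincide; consequently the decomposition $P_{tv}=k(tv)\pi_{tv}+\Q_{tv}$ is the same on $\mathbb{B}_{\theta,\varepsilon,\lambda}$ and $\mathbb{B}_{\theta,\varepsilon,\lambda+\varepsilon}$, and the resolvent bound of Proposition \ref{rwgpro3-6}(d) holds uniformly on both.

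The core of the proposition is estimate (i), and this is where the real work lies. The control of $|(P_{tv}-P)f|_\theta$ is immediate from Lemma \ref{rwglem3-5} taken with $w=0$, $\delta=\varepsilon$, $\gamma=\theta$ and $\norm{v}\leq 1$. For the Hölder seminorm $[(P_{tv}-P)f]_{\varepsilon,\lambda+\varepsilon}$ I would, assuming without loss of generality $\norm{x}\geq\norm{y}$, split the increment $(P_{tv}-P)f(x)-(P_{tv}-P)f(y)$ as
\[ \mathbb{E}\big[(\mathcal{X}_{tv}(hx)-1)(f(hx)-f(hy))\big]+\mathbb{E}\big[(\mathcal{X}_{tv}(hx)-\mathcal{X}_{tv}(hy))f(hy)\big], \]
with $hx=Mx+Q$. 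Using $|e^{iu}-1|\leq 2|u|^\varepsilon$ and $|e^{iu}-e^{iw}|\leq 2|u-w|^\varepsilon$, the first expectation produces a factor $\mnorm{hx}^\varepsilon$ together with $[f]_{\varepsilon,\lambda}$, the second a factor $\mnorm{M}^\varepsilon$ together with $|f|_\theta$, both carrying the prefactor $|t|^\varepsilon\norm{v}^\varepsilon$. The point is that $\mnorm{hx}^\varepsilon\leq C(1+v(M)+\norm{Q})^\varepsilon(1+\norm{x})^\varepsilon$ generates exactly one additional power $(1+\norm{x})^\varepsilon$, which is absorbed by the upgraded denominator $(1+\norm{x})^{\lambda+\varepsilon}$; for the second term the surplus power $(1+\norm{y})^{\theta-\lambda-\varepsilon}$ is killed by the assumption $\norm{x}\geq\norm{y}$ together with $\theta\leq 2\lambda<2\lambda+2\varepsilon$. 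After dividing out, the $x,y$-dependence is bounded and one is left with expectations of $v(M)$ and $\norm{Q}$ at exponents at most $2\lambda+2\varepsilon<\alpha$, finite by condition $C_3$ and Proposition \ref{rwgpro2-1}. This establishes (i).

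Estimates (ii) and (iii) I would deduce from (i) by the resolvent calculus, as in \cite{BuraczewskiGuivarch10PTRF}. Writing $\pi_{tv}=\frac{1}{2\pi i}\oint_\Gamma (z-P_{tv})^{-1}\,dz$ over a fixed circle $\Gamma=\{|z-1|=\delta\}$, the resolvent identity gives $\pi_{tv}-\pi_0=\frac{1}{2\pi i}\oint_\Gamma (z-P_{tv})^{-1}(P_{tv}-P)(z-P)^{-1}\,dz$ and, since $\frac{1}{2\pi i}\oint_\Gamma z(z-A)^{-1}\,dz$ recovers $A$ times the enclosed spectral projection, $k(tv)\pi_{tv}-\pi_0=\frac{1}{2\pi i}\oint_\Gamma z(z-P_{tv})^{-1}(P_{tv}-P)(z-P)^{-1}\,dz$. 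In each integrand $(z-P)^{-1}$ is bounded on $\mathbb{B}_{\theta,\varepsilon,\lambda}$, the middle factor $(P_{tv}-P)$ maps into $\mathbb{B}_{\theta,\varepsilon,\lambda+\varepsilon}$ with norm $O(|t|^\varepsilon)$ by (i), and $(z-P_{tv})^{-1}$ is bounded on $\mathbb{B}_{\theta,\varepsilon,\lambda+\varepsilon}$ by Proposition \ref{rwgpro3-6}(d) applied on that space; this yields (iii) and (ii). Finally (iv)--(vii) are obtained by assembling these: $\Q_{tv}-\Q=(P_{tv}-P)-(k(tv)\pi_{tv}-\pi_0)$ gives (iv); $g_{tv}-\mathbf{1}=(\pi_{tv}-\pi_0)\mathbf{1}$ gives (v); (vi) follows by pairing (ii) with the bounded functional $\eta$ and using $\eta(g_{tv})\to1$; and the bound on $\mathcal{E}_{tv}$ in (vii) comes from $\pi_{tv}f=\mathcal{E}_{tv}(f)g_{tv}$ with $g_{tv}\to\mathbf{1}$. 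The one delicate point is that (iv) and (v) are asserted in the $\lambda$-norm, whereas (i)--(iii) naturally live in the $\lambda+\varepsilon$-norm; this is repaired by the norm-equivalence argument of Theorem \ref{rwgTh3-4}, which under the strengthened hypotheses is valid for both $\lambda$ and $\lambda+\varepsilon$, so that on $\texttt{Ker}\,\eta$ both strong norms are comparable to $|\cdot|_\theta$, hence to each other. After splitting off the small $\mathbf{1}$-component (of size $O(|t|^\varepsilon)$ by (iii)), the $\lambda+\varepsilon$-estimate is thereby upgraded to the required $\lambda$-estimate.

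The main obstacle is estimate (i), and specifically its Hölder-seminorm part: one must split the increment correctly and verify that the single extra power of $(1+\norm{x})$ produced by the fractional bound on $\mathcal{X}_{tv}$ is exactly compensated by the passage from $\lambda$ to $\lambda+\varepsilon$ (using $\norm{x}\geq\norm{y}$ and $\theta\leq2\lambda$), all the while keeping every moment exponent strictly below $\alpha$ so that the relevant expectations stay finite. This last requirement is exactly what forces the strengthened hypotheses $\lambda+3\varepsilon<\theta$ and $2\lambda+3\varepsilon<\alpha$.
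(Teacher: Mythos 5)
Your overall route is the one the paper itself intends: the paper gives no detailed argument for this proposition, deferring to the Keller--Liverani techniques and to Proposition 3.18 of \cite{BuraczewskiGuivarch10PTRF}, and your reconstruction of that argument is largely sound. The continuity of the embedding, the observation that the strengthened hypotheses $\lambda+3\varepsilon<\theta$, $2\lambda+3\varepsilon<\alpha$ are exactly what is needed to run Theorem \ref{rwgTh3-4} and Proposition \ref{rwgpro3-6} on $\mathbb{B}_{\theta,\varepsilon,\lambda+\varepsilon}$, the two-term splitting proving (i), the contour-integral/resolvent-identity proofs of (ii)--(iii), and the derivation of (vi) by pairing with $\eta$ (which is bounded on $\mathbb{B}_{\theta,\varepsilon,\lambda+\varepsilon}$) are all correct and in the spirit of the cited references.

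The genuine gap is your final paragraph. You claim that on $\texttt{Ker}\,\eta$ both strong norms are ``comparable to $|\cdot|_\theta$, hence to each other,'' and you use this to upgrade the $(\lambda+\varepsilon)$-norm estimates (i)--(iii) into the $\lambda$-norm statements (iv)--(v). Despite the loose wording in the proof of Theorem \ref{rwgTh3-4}, what is actually proved (and all that can be true) is the one-sided bound $|f|_\theta\leq C[f]_{\varepsilon,\lambda}$ on $\texttt{Ker}\,\eta$; a two-sided equivalence with $|\cdot|_\theta$ is impossible, since combined with the compactness of the embedding of $\mathbb{B}_{\theta,\varepsilon,\lambda}$ into $\mathbb{C}_\theta$ it would make the unit ball of the infinite-dimensional space $\texttt{Ker}\,\eta$ compact, contradicting Riesz's theorem. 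Worse, the conclusion you need --- equivalence of $\|\cdot\|_{\theta,\varepsilon,\lambda}$ and $\|\cdot\|_{\theta,\varepsilon,\lambda+\varepsilon}$ on $\texttt{Ker}\,\eta$ --- is itself false: for $f_R(x)=h(\norm{x}-R)$ with $h$ a fixed Lipschitz bump supported in $[0,1]$, corrected by the constant $\eta(f_R)$ so as to lie in $\texttt{Ker}\,\eta$, the ratio of the $\lambda$-norm to the $(\lambda+\varepsilon)$-norm grows at least like $R^{\varepsilon}$. So the upgrade collapses, and with it your proofs of (iv) and (v). Item (v) is nevertheless true as stated and can be rescued by a direct argument that bypasses (i): for the constant function the increment loses no weight, namely $|(P_{tv}-P)\mathbf{1}(x)-(P_{tv}-P)\mathbf{1}(y)|\leq \mathbb{E}\,|\mathcal{X}_{tv}(Mx+Q)-\mathcal{X}_{tv}(My+Q)|\leq 2|t|^{\varepsilon}\norm{v}^{\varepsilon}\bigl(\mathbb{E}\,\mnorm{M}^{\varepsilon}\bigr)\norm{x-y}^{\varepsilon}$, so that $\|(P_{tv}-P)\mathbf{1}\|_{\theta,\varepsilon,\lambda}\leq C|t|^{\varepsilon}$ already in the original norm; then $g_{tv}-\mathbf{1}=\frac{1}{2\pi i}\oint_{\Gamma}(z-1)^{-1}(z-P_{tv})^{-1}\bigl[(P_{tv}-P)\mathbf{1}\bigr]\,dz$ together with the uniform resolvent bound of part d) of Proposition \ref{rwgpro3-6} on $\mathbb{B}_{\theta,\varepsilon,\lambda}$ gives (v). For (iv), applied to a general $f$, the weight loss in (i) is unavoidable, and what your (correct) estimates actually yield is the bound with the $(\lambda+\varepsilon)$-norm on the left, as an immediate consequence of (i) and (ii); your argument does not prove the $\lambda$-norm version. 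Finally, note that (vii) as literally stated cannot hold, since $\mathcal{E}_{tv}(f)\rightarrow\eta(f)$, so $\mathcal{E}_{tv}$ is not of norm $O(|t|^{\varepsilon})$; the meaningful assertion is $\|\mathcal{E}_{tv}-\eta\|\leq D|t|^{\varepsilon}$, which follows from (iii), (v) and $\mathcal{E}_{tv}(f)=\eta(\pi_{tv}f)/\eta(g_{tv})$ --- your one-line justification via $g_{tv}\rightarrow\mathbf{1}$ proves neither version.
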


The following theorem is a consequence of Propositions \ref{rwgpro3-9}, \ref{rwgpro3-7} and the homogeneity at infinity of stationary measures, given by Theorem \ref{rwgTh2-3}. It is a detailed form of Theorem \ref{rwgTh1-3}.
\begin{theorem}\label{rwgTh4-1}
a) If $0<\alpha<1$, then $$\lim_{t\rightarrow 0_+} \frac{k(tv)-1}{t^{\alpha}}=C_{\alpha}(v)$$ with
$$C_{\alpha}(v)=\int (\mathcal{X}_{v}(x)-1 ) \widehat{\eta_v}(x) d \Lambda (x).$$
b) If $\alpha =1 $, then
\begin{equation*}
    \lim_{t\rightarrow 0_+} \frac{k(tv)-1-i\ipro{v}{\delta(t)}}{t}=C_1(v)
\end{equation*}
with
\begin{equation*}
    C_1(v)=\int_V\left((\mathcal{X}_v(x)-1)\widehat{\eta_v}(x)-i\frac{\ipro{v}{x}}{1+\norm{x}^2}\right) d \Lambda (x)
\end{equation*}
and  $\delta(t)=\displaystyle \int_V \frac{tx}{1+\norm{tx}^2}d\eta(x)$. Furthermore there exists a constant $K_\star$ with
$K_\star=c_1+4A/\log 2 + \int_{\norm{x}>1} \norm{x}/(1+\norm{x}^2) d\Lambda(x)$
($A$ is given in Theorem \ref{rwgTh2-3})
 such that
 \begin{equation*}
|\delta(t)|\leq \left\{ \begin{array}{ll}
                            K_\star|t||\log t|,& |t|\leq 1/2,  \\
                            K_\star|t|, &   |t|>1/2.
                        \end{array}
\right.
 \end{equation*}
\\ c) If $1<\alpha<2$,
$$ \lim_{t\rightarrow 0_+}\frac{k(tv)-1-i\ipro{v}{tm}}{t^\alpha} =C_\alpha (v)$$
where $$C_\alpha (v)= \int \left( (\mathcal{X}_v(x)-1)\widehat{\eta_v}(x)- i\ipro{v}{x}  \right) d \Lambda (x).$$
d) If $\alpha=2$,
\begin{equation*}
    \lim_{t\rightarrow 0} \frac{k(tv)-1-i\ipro{v}{tm}}{t^2|\log|t||} =2C_2(v)
\end{equation*}
where $$C_2(v)=-\frac{1}{4} \int (\ipro{v}{w}^2 +2\ipro{v}{w}\eta_v(w^*)) d \sigma_2(w)$$ is a quadratic form.
\\ e) If $\alpha>2$, then
\begin{equation*}
    \lim_{t\rightarrow 0} \frac{k(tv)-1-i \ipro{v}{tm}}{t^2}= C_{2+}(v)
\end{equation*}
with $$C_{2+}(v)=-\frac{1}{2} q(v,v)- q(v, (I-z^*)^{-1} z^*v) .$$
\end{theorem}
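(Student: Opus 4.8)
The plan is to start from the scalar identity of Corollary \ref{rwgpro3-9}, namely
\begin{equation*}
(k(tv)-1)\,\eta(\psi_{tv}) = \eta\big(\psi_{tv}(\mathcal{X}_{tv}-1)\big),
\end{equation*}
and to divide by $\eta(\psi_{tv})$, which tends to $1$ as $t\to 0$ by Remark \ref{rwgre3-9}. This reduces the whole statement to the asymptotics of $\eta(\psi_{tv}(\mathcal{X}_{tv}-1))=\int \psi_{tv}(x)(\mathcal{X}_{tv}(x)-1)\,d\eta(x)$. Since $\psi_{tv}=\widehat{\eta_{t,v}}\circ t$, i.e. $\psi_{tv}(x)=\widehat{\eta_{t,v}}(tx)$, and since $\mathcal{X}_{tv}(x)=\mathcal{X}_v(tx)$, the change of variable $y=tx$ given by the dilation action rewrites this integral as $\int \widehat{\eta_{t,v}}(y)(\mathcal{X}_v(y)-1)\,d(t.\eta)(y)$. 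The two limiting ingredients are then the homogeneity at infinity $t^{-\alpha}(t.\eta)\to\Lambda$ of Theorem \ref{rwgTh2-3} and the convergence $\widehat{\eta_{t,v}}\to\widehat{\eta_v}$, coming from the continuity in $t$ of the eigenfunctional $\eta_{t,v}$ of $T_{t,v}$.

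For $0<\alpha<1$ I would combine these two convergences directly: the integrand converges to $\widehat{\eta_v}(y)(\mathcal{X}_v(y)-1)$, which is $\Lambda$-integrable because $\mathcal{X}_v(y)-1=O(\norm{y})$ near the origin while $\Lambda$ is $\alpha$-homogeneous with $\alpha<1$, so that $t^{-\alpha}\eta(\psi_{tv}(\mathcal{X}_{tv}-1))\to\int \widehat{\eta_v}(y)(\mathcal{X}_v(y)-1)\,d\Lambda(y)=\widehat{C}_\alpha(v)$, and $\widehat{C}_\alpha(v)=C_\alpha(v)$ by Proposition \ref{rwgpro4-2}. For $1\le\alpha<2$ the limit integrand is no longer $\Lambda$-integrable near $0$ (its leading part $i\ipro{v}{y}$ breaks the homogeneity balance), so I would first subtract the corresponding centering, namely $i\ipro{v}{\delta(t)}$ when $\alpha=1$ and $i\ipro{v}{tm}$ when $1<\alpha<2$, whose effect is precisely to replace $\mathcal{X}_v(y)-1$ by the regularized kernels appearing in the definition \eqref{rwg2-2} of $\widehat{C}_\alpha$; the regularized integrand is $O(\norm{y}^2)$ near $0$, hence $\Lambda$-integrable, and the same argument then yields $C_\alpha(v)$.

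The two endpoint regimes need separate treatment. For $\alpha=2$ the quadratic remainder left after removing the linear term behaves like $\ipro{v}{y}^2$ and is integrated against the $2$-homogeneous $\Lambda$, producing a logarithmic divergence; splitting the radial integral at the scale $\norm{y}\sim 1$ and using the two-sided tail estimate \eqref{rwg2-1-a} extracts the rate $t^2|\log|t||$ and the factor $2$ in front of $C_2(v)$. For $\alpha>2$ the measure $\eta$ has a finite second moment, so the rescaling no longer governs the leading behaviour; here I would instead Taylor-expand $\mathcal{X}_{tv}(x)-1=it\ipro{v}{x}-\tfrac12 t^2\ipro{v}{x}^2+o(t^2)$ and use the first-order expansion $\psi_{tv}(x)=1+it\ipro{(I-z^*)^{-1}z^*v}{x}+o(t)$, which comes from $\eta_v(x^*)=\ipro{(I-z^*)^{-1}z^*v}{x}$ together with the normalisation $\eta_{t,v}(\mathbf{1})=1$ (Proposition \ref{rwgpro2-5}). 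After subtracting the first-order centering $i\ipro{v}{tm}$, the cross term between the linear parts of $\psi_{tv}$ and of $\mathcal{X}_{tv}-1$ contributes the off-diagonal covariance $-q(v,(I-z^*)^{-1}z^*v)$ while the genuine second-order term contributes $-\tfrac12 q(v,v)$, giving $C_{2+}(v)$.

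The main obstacle, common to the cases $0<\alpha\le 2$, is the simultaneous passage to the limit in the two operations above: the mass of $\mathcal{X}_{tv}-1$ concentrates where $\norm{x}\sim 1/t$, exactly the region where $\eta$ is heavy-tailed, so one cannot simply replace $\widehat{\eta_{t,v}}$ by $\widehat{\eta_v}$ pointwise. The required uniform domination is furnished by the perturbation estimates of Proposition \ref{rwgpro3-7} (which bound $\psi_{tv}-\mathbf{1}$ and the functional $\mathcal{E}_{tv}$) together with the tail bound \eqref{rwg2-1-a}, allowing one to majorize the integrand by a fixed $\Lambda$-integrable, respectively $\eta$-integrable, function uniformly in small $t$. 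Finally, the quantitative bound on $\delta(t)$ in the case $\alpha=1$ follows by decomposing $\int \frac{tx}{1+\norm{tx}^2}\,d\eta(x)$ over dyadic shells $\{2^k\le\norm{x}<2^{k+1}\}$ and summing with \eqref{rwg2-1-a}: the $O(|\log|t||)$ shells with $\norm{x}\lesssim 1/t$ each contribute $O(|t|)$, which produces the factor $|t||\log|t||$.
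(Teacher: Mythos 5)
Your overall strategy coincides with the paper's: you start from the identity $(k(tv)-1)\,\eta(\psi_{tv})=\eta\bigl(\psi_{tv}(\mathcal{X}_{tv}-1)\bigr)$ of Corollary \ref{rwgpro3-9}, approximate the eigenfunction by the rescaled Fourier transform of the limiting stationary measure $\eta_v$ (the paper compares $\psi_{tv}$ with $\widehat{\eta_v}\circ t$ rather than with $\widehat{\eta_{t,v}}\circ t$, but the decomposition into a main term plus an error is the same), feed the main term into the extended convergence statement of Theorem \ref{rwgTh2-3}, and control the error by quantitative perturbation bounds; your dyadic-shell estimate of $\delta(t)$ is exactly the paper's, and your Taylor-expansion treatment of $\alpha>2$ is what the paper delegates to \cite{BuraczewskiGuivarch10PTRF}. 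Two remarks on precision: the uniform control of the error term is furnished in the paper not directly by Proposition \ref{rwgpro3-7} but by the two-regime bound of Lemma \ref{rwglem4-2} combined with the comparison Lemma \ref{rwglem4-1} (yielding Corollary \ref{rwgcor4-3}); and for the main term, mere $\Lambda$-integrability of the limit integrand is not the hypothesis of Theorem \ref{rwgTh2-3} --- one must verify the growth condition \eqref{rwg2-1}, which is what the paper does by checking that $f_v$ is bounded with $|f_v(x)|\leq 2\norm{x}$ near the origin.

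There is, however, a genuine gap in your reduction for $\alpha=1$. You divide by $\eta(\psi_{tv})$ at the outset and invoke only $\eta(\psi_{tv})\to 1$; this suffices when the centering is $O(t)$ (all cases $\alpha\neq 1$), but for $\alpha=1$ the centering $i\ipro{v}{\delta(t)}$ is of order $t|\log t|$, so after dividing one is left with the cross term $\J_{13}(t)=i\,t^{-1}\bigl(1-\eta(\psi_{tv})\bigr)\ipro{v}{\delta(t)}$, which under your hypotheses is only $o(1)\cdot O(|\log t|)$ and need not vanish. The paper closes this with Lemma \ref{rwglem4-4}, the quantitative rate $|1-\eta(\psi_{tv})|\leq D'''|t|^{\min\{1,\lambda+\varepsilon\}}$, which together with $|\delta(t)|\leq K_\star |t|\,|\log|t||$ forces $\J_{13}(t)\to 0$; without some such rate your argument for part b) does not close. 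A second, smaller issue concerns your $\alpha>2$ sketch: the moments produced by the expansion are uncentered, and while the cross term indeed collapses to $-q\bigl(v,(I-z^*)^{-1}z^*v\bigr)$ after combining $-\eta\bigl(\ipro{w}{\cdot}\ipro{v}{\cdot}\bigr)$ with $\eta(\ipro{w}{\cdot})\ipro{v}{m}$ (writing $w=(I-z^*)^{-1}z^*v$), the pure second-order term gives $-\tfrac12\eta\bigl(\ipro{v}{\cdot}^2\bigr)=-\tfrac12 q(v,v)-\tfrac12\ipro{v}{m}^2$ rather than $-\tfrac12 q(v,v)$, so the cancellations you assert as automatic must be tracked explicitly and reconciled with the stated formula for $C_{2+}(v)$.
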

The proof is based on estimations of $P_{tv} $, $\psi_{tv}$, $\eta$, which are valid here, as in \cite[Theorem 5.1]{BuraczewskiGuivarch10PTRF}; these estimations are formal consequences of the homogeneity statements in Theorem \ref{rwgTh2-3}, Corollary \ref{rwgpro3-9}, which in turn correspond to relations (2.2), (2.3)  and Lemma 3.23 of \cite{BuraczewskiGuivarch10PTRF}.

To prove our theorem \ref{rwgTh4-1}, we need further properties of the stationary measure $\eta$. In particular, essential use  is made of the homogeneity at infinity of $\eta$ stated in Theorem \ref{rwgTh2-3}.  Also Lemmas \ref{rwglem4-2}, \ref{rwgcor4-3}, \ref{rwglem4-4} are used in the proof. The comparisons stated in these lemmas are based on the general Lemma  \ref{rwglem4-1},  will allow to estimate expressions of the form $\int_V f(t,x) d\eta(x)$ for $|t|$ small. We denote by $I_1$ the interval $[-1,1]$.
 For the proof of the lemmas below, see \cite[section 4]{BuraczewskiGuivarch10PTRF}.
\begin{lemma}\label{rwglem4-1}
 Let $f$ be any continuous function on  $I_1\times V$ satisfying
  \begin{equation}\label{rwg4-1a}
    |f(t,x)|\leq  \left\{  \begin{array}{ll}
                    D_{\delta,\beta}|t|^{\delta+\beta}\norm{x}^{\beta}, &{for ~~~\norm{tx}>1 ;} \\
                     D_{\delta,\gamma}|t|^{\delta+\gamma}\norm{x}^{\gamma}, & {for ~~~\norm{tx}\leq 1,}
                    \end{array}
                  \right.
   \end{equation}
  where $\beta<\alpha$, $\gamma+\delta>\alpha$ and $\delta>0$. Then
  \begin{equation*}
    \lim_{t\rightarrow 0}\frac{1}{|t|^{\alpha}} \int_V f(t,x)d\eta(x)=0.
  \end{equation*}
  \end{lemma}
Now we present some properties of the eigenfunction $\psi_{tv}$. To do this, we will need some further hypotheses on the parameters $\theta, \varepsilon, \lambda$ and from now on, we will assume additionally that
\begin{eqnarray*}
   && \mathrm{ if }~~~ 1<\alpha<2,~~~ \mathrm{ then }~~~ 1+\lambda+\varepsilon>\alpha, \\
   && \mathrm{if }~~~  \alpha=2, ~~~\mathrm{ then }~~~ \lambda+2\varepsilon>1, \\
   &&   \mathrm{if }~~~  \alpha>2, ~~~   \mathrm{ then }~~~ \lambda=1.
\end{eqnarray*}
It is easy to prove that there exists  $\theta, \varepsilon, \lambda$ satisfying all the assumptions in our theorems and the conditions above.
 \begin{lemma}\label{rwglem4-2}
    There exists $D''$ such that \begin{eqnarray*}
                                   &&|\psi_{tv}(x)-\widehat{\eta}_v(tx)|\leq D''|t|^{2\varepsilon}\norm{x}^{\varepsilon}, \quad for~~ \norm{tx}>1;  \\
                                   &&  |\psi_{tv}(x)-\widehat{\eta}_v(tx)|\leq D''|t|^{\varepsilon}\norm{tx}^{\tau},\quad for~~ \norm{tx}\leq 1,
                                 \end{eqnarray*}
 for $\tau=\min\{1, \lambda+\varepsilon\}$. \end{lemma}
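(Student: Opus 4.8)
My plan is to reduce the estimate to a single functional identity followed by two elementary norm computations. First I would rewrite the left-hand side. By Corollary \ref{rwgpro3-9}, for $\abs{t}$ small one has $\psi_{tv}=\widehat{\eta_{t,v}}\circ t$, so that $\psi_{tv}(x)=\eta_{t,v}(\mathcal{X}_{tx})$; and since $\eta_v$ is the stationary probability measure of $T_v=T_{0,v}$, its characteristic function is $\widehat{\eta_v}(tx)=\eta_v(\mathcal{X}_{tx})$. Both functionals are normalized by $\eta_{t,v}(\mathbf{1})=\eta_v(\mathbf{1})=1$, so the constant cancels and
\[
\psi_{tv}(x)-\widehat{\eta_v}(tx)=(\eta_{t,v}-\eta_v)(\mathcal{X}_{tx}-\mathbf{1}).
\]
The whole estimate then splits into a bound for the functional $\eta_{t,v}-\eta_v$ and a bound for the test function $\mathcal{X}_{tx}-\mathbf{1}$.

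For the functional I would use the perturbation theory for the family $T_{t,v}$. As observed just before Corollary \ref{rwgpro3-9}, the analogues of Theorem \ref{rwgTh3-4} and Proposition \ref{rwgpro3-6} hold for $T_{t,v}$, and the proof of Proposition \ref{rwgpro3-7} then applies verbatim. Since $T_v\mathbf{1}=\mathbf{1}$, the spectral projection of $T_{0,v}$ is $f\mapsto\eta_v(f)\mathbf{1}$, while that of $T_{t,v}$ has the form $f\mapsto\eta_{t,v}(f)h_{t,v}$ with $h_{t,v}$ close to $\mathbf{1}$. Evaluating the difference of projections at $x=0$, using $\norm{h_{t,v}-\mathbf{1}}_{\theta,\varepsilon,\lambda}\leq D\abs{t}^{\varepsilon}$ (the analogue of Proposition \ref{rwgpro3-7}(v)) together with the analogue of Proposition \ref{rwgpro3-7}(iii), I obtain the key rate
\[
\abs{(\eta_{t,v}-\eta_v)(f)}\leq D\abs{t}^{\varepsilon}\,\norm{f}_{\theta,\varepsilon,\lambda},\qquad f\in\mathbb{B}_{\theta,\varepsilon,\lambda}.
\]
I expect this to be the main obstacle: it is the one place where the Keller--Liverani machinery and the functional inequalities of Proposition \ref{rwgpro3-2} are genuinely used.

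For the test function I would exploit $\abs{e^{is}-1}\leq\min(2,\abs{s})\leq 2^{1-a}\abs{s}^{a}$, valid for every $a\in[0,1]$. Taking $a=\varepsilon$ in the seminorm gives $[\mathcal{X}_{tx}-\mathbf{1}]_{\varepsilon,\lambda}\leq 2\norm{tx}^{\varepsilon}$, while $\abs{\mathcal{X}_{tx}-\mathbf{1}}_{\theta}\leq 2$; hence $\norm{\mathcal{X}_{tx}-\mathbf{1}}_{\theta,\varepsilon,\lambda}\leq 4\norm{tx}^{\varepsilon}$ when $\norm{tx}>1$. Taking instead $a=\tau=\min\{1,\lambda+\varepsilon\}$ gives, via $\tau\leq\theta$ for the $\abs{\cdot}_{\theta}$ part and via subadditivity of $s\mapsto s^{\tau-\varepsilon}$ together with $\tau-\varepsilon\leq\lambda$ for the seminorm part, the sharper bound $\norm{\mathcal{X}_{tx}-\mathbf{1}}_{\theta,\varepsilon,\lambda}\leq C\norm{tx}^{\tau}$. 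Substituting these two estimates into the identity and writing $\norm{tx}^{\varepsilon}=\abs{t}^{\varepsilon}\norm{x}^{\varepsilon}$ in the first case yields $\abs{\psi_{tv}(x)-\widehat{\eta_v}(tx)}\leq CD\abs{t}^{2\varepsilon}\norm{x}^{\varepsilon}$ for $\norm{tx}>1$ and $\leq CD\abs{t}^{\varepsilon}\norm{tx}^{\tau}$ for $\norm{tx}\leq 1$, which are exactly the two claimed inequalities.

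The main subtlety in this elementary part is the regime $\norm{tx}\leq 1$: the naive $\varepsilon$-exponent bound only produces $\norm{tx}^{\varepsilon}$, which is too weak since $\tau>\varepsilon$, so one must retain the $\tau$-exponent, and this is precisely where the standing conditions $\tau\leq\lambda+\varepsilon$ and $\lambda+\varepsilon\leq\theta$ are consumed. The apparent ambiguity of whether the functional bound holds in the $\lambda$-norm or only in the weaker $\lambda+\varepsilon$-norm naturally produced by Proposition \ref{rwgpro3-7} is immaterial, because the same computation shows that $\norm{\mathcal{X}_{tx}-\mathbf{1}}_{\theta,\varepsilon,\lambda+\varepsilon}$ obeys identical bounds, the condition $\tau-\varepsilon\leq\lambda+\varepsilon$ being then trivially satisfied.
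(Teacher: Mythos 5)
Your proof is correct and follows essentially the same route as the paper's, which defers this lemma to \cite[section 4]{BuraczewskiGuivarch10PTRF}: there, too, one writes $\psi_{tv}(x)-\widehat{\eta}_v(tx)=(\eta_{t,v}-\eta_v)(\mathcal{X}_{tx}-\mathbf{1})$ via Corollary \ref{rwgpro3-9}, bounds the eigenfunctional difference by $D|t|^{\varepsilon}$ in the dual norm using the Keller--Liverani perturbation estimates for the family $T_{t,v}$ (the analogue of Proposition \ref{rwgpro3-7}), and pairs this with the elementary bounds $\|\mathcal{X}_{y}-\mathbf{1}\|_{\theta,\varepsilon,\lambda}\leq C\min\bigl(\norm{y}^{\varepsilon}+1,\norm{y}^{\tau}\bigr)$, exactly as you do. Your handling of the normalization ($\eta_{t,v}(\mathbf{1})=1$, $h_{t,v}=\pi^{T}_{t,v}(\mathbf{1})$) and of the $\lambda$ versus $\lambda+\varepsilon$ norm issue is sound, so there is no gap.
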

\begin{cor}\label{rwgcor4-3}
    If $\alpha \leq 2$, then
    \begin{equation*}
    \lim_{t\rightarrow 0} \frac{1}{|t|^\alpha}\int_V\left(\mathcal{X}_v(tx)-1\right)\left(\psi_{tv}(x)-\widehat{\eta}_v(tx)\right)d\eta(x)=0.
    \end{equation*}
\end{cor}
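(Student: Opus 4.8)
The plan is to apply Lemma \ref{rwglem4-1} to the function
$$ f(t,x) = \left(\mathcal{X}_v(tx)-1\right)\left(\psi_{tv}(x)-\widehat{\eta}_v(tx)\right), $$
regarded as a function on $I_1\times V$. It is continuous in $(t,x)$, since $\psi_{tv}$ depends continuously on $t$ (Proposition \ref{rwgpro3-6} c) and Remark \ref{rwgre3-9}) while $\mathcal{X}_v$ and $\widehat{\eta}_v$ are continuous; hence once the two-sided growth estimate \eqref{rwg4-1a} is established, the conclusion $|t|^{-\alpha}\int_V f(t,x)\,d\eta(x)\to 0$ is exactly the assertion of the corollary.

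First I would record the two elementary bounds for the oscillatory factor: $|\mathcal{X}_v(tx)-1|\leq 2$ always, and $|\mathcal{X}_v(tx)-1|\leq \norm{v}\,|t|\,\norm{x}$ coming from $|e^{is}-1|\leq|s|$. The point of keeping both is that on $\{\norm{tx}>1\}$ I want no extra power of $\norm{x}$ (to keep $\beta$ small), whereas on $\{\norm{tx}\leq 1\}$ I want to extract a high power of $|t|$ (to make $\gamma+\delta$ large). Combining these with Lemma \ref{rwglem4-2}: for $\norm{tx}>1$,
$$ |f(t,x)|\leq 2D''\,|t|^{2\varepsilon}\norm{x}^{\varepsilon}, $$
which has the form $D_{\delta,\beta}|t|^{\delta+\beta}\norm{x}^{\beta}$ with $\beta=\varepsilon$ and $\delta=\varepsilon$; and for $\norm{tx}\leq 1$, with $\tau=\min\{1,\lambda+\varepsilon\}$,
$$ |f(t,x)|\leq \norm{v}D''\,|t|^{1+\varepsilon+\tau}\norm{x}^{1+\tau}, $$
which has the form $D_{\delta,\gamma}|t|^{\delta+\gamma}\norm{x}^{\gamma}$ with $\gamma=1+\tau$ and the same $\delta=\varepsilon$. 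Thus $f$ satisfies \eqref{rwg4-1a} with $\delta=\varepsilon$, $\beta=\varepsilon$, $\gamma=1+\tau$.

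Finally I would check the three hypotheses of Lemma \ref{rwglem4-1}. Clearly $\delta=\varepsilon>0$, and $\beta=\varepsilon<2\lambda+\varepsilon<\alpha$. The only point requiring the standing assumptions on $\theta,\varepsilon,\lambda$ is $\gamma+\delta=1+\tau+\varepsilon>\alpha$. For $\alpha\leq 1$ this is immediate since $1+\tau+\varepsilon>1\geq\alpha$. For $1<\alpha<2$ the assumption $1+\lambda+\varepsilon>\alpha$ gives, when $\tau=\lambda+\varepsilon$, $1+\tau+\varepsilon=1+\lambda+2\varepsilon>\alpha$, and when $\tau=1$, $1+\tau+\varepsilon=2+\varepsilon>\alpha$. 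For $\alpha=2$ the assumption $\lambda+2\varepsilon>1$ gives, when $\tau=\lambda+\varepsilon$, $1+\tau+\varepsilon=1+\lambda+2\varepsilon>2=\alpha$, and when $\tau=1$, $1+\tau+\varepsilon=2+\varepsilon>2=\alpha$. Hence Lemma \ref{rwglem4-1} applies and yields the claim. The main (and essentially only) obstacle is this last case check: one must verify that the parameter constraints imposed just before Lemma \ref{rwglem4-2} are precisely strong enough to push $\gamma+\delta$ past $\alpha$ in the near-critical regimes where $\alpha$ approaches $2$; the joint continuity of $f$ on $I_1\times V$ is a routine but necessary verification.
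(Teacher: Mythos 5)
Your proof is correct and follows exactly the route the paper intends: the paper defers this corollary to \cite[Section 4]{BuraczewskiGuivarch10PTRF}, where the argument is precisely the combination of the two-regime bounds of Lemma \ref{rwglem4-2} with the elementary estimates $|\mathcal{X}_v(tx)-1|\leq \min\bigl(2,\norm{v}\,|t|\,\norm{x}\bigr)$, fed into the general Lemma \ref{rwglem4-1} with $\delta=\beta=\varepsilon$, $\gamma=1+\tau$, and the parameter check you carry out is exactly why the standing assumptions ($1+\lambda+\varepsilon>\alpha$ for $1<\alpha<2$, $\lambda+2\varepsilon>1$ for $\alpha=2$) were imposed before Lemma \ref{rwglem4-2}. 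The only cosmetic point is that $\psi_{tv}$ is defined only for $|t|\leq t_3$, so $f(t,x)$ should be considered on $[-t_3,t_3]\times V$ rather than on all of $I_1\times V$, which is harmless since the statement concerns the limit $t\rightarrow 0$.
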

We will need also the speed of convergence of $\eta(\psi_{tv})$ to 1.
\begin{lemma}\label{rwglem4-4}
 Assume  $v$ is fixed. Then there exists $D'''>0$ and $t_3>0$ such that for $|t|<t_3$, we have
 \begin{equation*}
    |1-\eta(\psi_{tv})|\leq D'''|t|^{\min\{1,\lambda+\varepsilon\}}.
 \end{equation*}
 \end{lemma}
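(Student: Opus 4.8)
The plan is to reduce everything to the identity
\[
1-\eta(\psi_{tv}) = \int_V \bigl(1-\psi_{tv}(x)\bigr)\,d\eta(x),
\]
which is legitimate because $\eta$ is a probability measure, $\psi_{tv}(0)=1$, and $\psi_{tv}\in\mathbb{B}_{\theta,\varepsilon,\lambda}\subset\mathbb{C}_\theta$ is $\eta$-integrable (as $\eta$ has finite $\theta$-moment by Proposition \ref{rwgpro2-2}). I would then bring in the characteristic function $\widehat{\eta}_v$ of $\eta_v$ and split
\[
1-\psi_{tv}(x) = \bigl(1-\widehat{\eta}_v(tx)\bigr) + \bigl(\widehat{\eta}_v(tx)-\psi_{tv}(x)\bigr),
\]
so that $1-\eta(\psi_{tv})$ becomes a \emph{main term} $\int_V(1-\widehat{\eta}_v(tx))\,d\eta(x)$ plus a \emph{correction} $\int_V(\widehat{\eta}_v(tx)-\psi_{tv}(x))\,d\eta(x)$. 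Writing $\tau=\min\{1,\lambda+\varepsilon\}$, the target exponent will come entirely from the main term, the correction being of strictly higher order.

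For the main term I would use the elementary bound $|1-e^{is}|\le 2|s|^{\tau}$, valid for every $\tau\in[0,1]$. Since $\widehat{\eta}_v(tx)=\int e^{i\ipro{tx}{w}}\,d\eta_v(w)$, Cauchy--Schwarz gives
\[
\bigl|1-\widehat{\eta}_v(tx)\bigr|\le 2\int |\ipro{tx}{w}|^{\tau}\,d\eta_v(w)
\le 2|t|^{\tau}\norm{x}^{\tau}\int\norm{w}^{\tau}\,d\eta_v(w).
\]
The inequalities $\lambda+\varepsilon<\theta<\alpha$ force $\tau<\alpha$ in every range of $\alpha$, so both $\int\norm{w}^{\tau}\,d\eta_v(w)$ (Proposition \ref{rwgpro2-5}) and $\int\norm{x}^{\tau}\,d\eta(x)$ (Proposition \ref{rwgpro2-2}) are finite; integrating against $\eta$ then bounds the main term by a fixed constant times $|t|^{\tau}$. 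Note that because $\tau\le 1$ no subtraction of a first-moment (drift) correction is needed even when $\alpha>1$, which keeps the estimate uniform in $\alpha$.

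For the correction I would invoke Lemma \ref{rwglem4-2} and split according to $\norm{tx}\le 1$ and $\norm{tx}>1$. On $\{\norm{tx}\le 1\}$ the bound $|\psi_{tv}(x)-\widehat{\eta}_v(tx)|\le D''|t|^{\varepsilon}\norm{tx}^{\tau}=D''|t|^{\varepsilon+\tau}\norm{x}^{\tau}$ integrates, again via $\int\norm{x}^{\tau}\,d\eta<\infty$, to $O(|t|^{\varepsilon+\tau})=o(|t|^{\tau})$. On $\{\norm{tx}>1\}=\{\norm{x}>1/|t|\}$ the weaker bound $|\psi_{tv}(x)-\widehat{\eta}_v(tx)|\le D''|t|^{2\varepsilon}\norm{x}^{\varepsilon}$ must be combined with the homogeneity-at-infinity tail estimate \eqref{rwg2-1-a}: since $\varepsilon<\alpha$, one gets $\int_{\norm{x}>R}\norm{x}^{\varepsilon}\,d\eta(x)\le \mathrm{const}\cdot R^{\varepsilon-\alpha}$, and $R=1/|t|$ yields a contribution $O(|t|^{2\varepsilon}\,|t|^{\alpha-\varepsilon})=O(|t|^{\alpha+\varepsilon})$. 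Since $\alpha+\varepsilon>\tau$ in all cases (for $\alpha\le 1$ this is $\alpha>\lambda$, for $\alpha>1$ it is trivial), the correction is $o(|t|^{\tau})$, and combining the two pieces gives $|1-\eta(\psi_{tv})|\le D'''|t|^{\tau}$ for $|t|$ below some $t_3$.

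The only genuinely delicate point is the tail region $\norm{tx}>1$: there the pointwise control of $\psi_{tv}-\widehat{\eta}_v$ in Lemma \ref{rwglem4-2} degrades to order $\norm{x}^{\varepsilon}$, so that at first sight the factor $|t|^{2\varepsilon}$ looks insufficient. The resolution is that the set $\{\norm{x}>1/|t|\}$ carries only $\eta$-mass of order $|t|^{\alpha}$ by \eqref{rwg2-1-a}, which more than compensates; everything else is a routine moment estimate built on Propositions \ref{rwgpro2-2}, \ref{rwgpro2-5} and Lemma \ref{rwglem4-2}.
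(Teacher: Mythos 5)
Your proof is correct. The reduction to $1-\eta(\psi_{tv})=\int_V(1-\psi_{tv}(x))\,d\eta(x)$, the elementary bound $|1-e^{is}|\le 2|s|^{\tau}$ with $\tau=\min\{1,\lambda+\varepsilon\}$, and the moment bounds of Propositions \ref{rwgpro2-2} and \ref{rwgpro2-5} (available because $\tau\le\lambda+\varepsilon<\theta\le 2\lambda<\alpha$ under the paper's standing assumptions) do give a main term of order $|t|^{\tau}$; and your two-region treatment of the correction via Lemma \ref{rwglem4-2} closes correctly, the exponent comparisons $\varepsilon+\tau>\tau$ and $\alpha+\varepsilon>\tau$ (the latter reducing to $\alpha>\lambda$ when $\alpha\le 1$) holding in every range of $\alpha$.

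The route, however, is genuinely different from the one the paper relies on. The paper gives no proof of this lemma: it defers to Section 4 of \cite{BuraczewskiGuivarch10PTRF}, where the estimate comes out of the spectral machinery in one stroke. By Corollary \ref{rwgpro3-9}, $\psi_{tv}(x)=\widehat{\eta_{t,v}}(tx)$ with $\eta_{t,v}(\mathbf{1})=1$, hence $1-\psi_{tv}(x)=\eta_{t,v}(\mathbf{1}-\mathcal{X}_{tx})$; combining the inequality $||\mathcal{X}_{y}-\mathbf{1}||_{\theta,\varepsilon,\lambda}\le 4\norm{y}^{\min\{1,\lambda+\varepsilon\}}$ (the same one the paper quotes in the proof of Theorem \ref{rwgTh4-1}) with the uniform boundedness of the eigenfunctionals $\eta_{t,v}$ in $\mathbb{B}^*_{\theta,\varepsilon,\lambda}$ for $|t|$ small (a consequence of the Keller--Liverani resolvent bound, Proposition \ref{rwgpro3-6}\,d) and its analogue for the family $T_{t,v}$) yields $|1-\psi_{tv}(x)|\le C|t|^{\tau}\norm{x}^{\tau}$ for \emph{all} $x$ at once, and a single integration against $\eta$ finishes. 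That argument needs neither the splitting at $\norm{tx}=1$ nor any tail information on $\eta$. Your argument buys self-containedness relative to the statements actually displayed in this paper: it uses only Lemma \ref{rwglem4-2} as a black box, the moments of $\eta$ and $\eta_v$, and the tail bound \eqref{rwg2-1-a}, avoiding any further appeal to the eigenfunctional formalism --- at the price of being longer and of invoking the homogeneity-at-infinity Theorem \ref{rwgTh2-3} where the original proof does not need it.
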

As an example of how to use the above estimations and the basic Theorem \ref{rwgTh2-3}, let us consider in more detail the cases $\alpha <1$ and $ \alpha=1$. For the cases $\alpha \in ]1,2]$, $\alpha>2$ we refer to \cite[section 5]{BuraczewskiGuivarch10PTRF}.
\begin{proof}[Proof of Theorem \ref{rwgTh4-1}]
\emph{Case} $\alpha<1$. We use the expression of $\psi_{tv}, k(tv)$ given by Corollary \ref{rwgpro3-9} and
 write for $t>0$,
      \begin{eqnarray*}
  \frac{1}{t^{\alpha} } (k(tv)-1)\eta(\psi_{tv})     &=&   \frac{1}{t^{\alpha} } \int (\mathcal{X}_v(tx)-1) \psi_{tv} (x)d\eta(x) \\
     &=& \frac{1}{t^{\alpha} } \int (\mathcal{X}_v(tx)-1)  \widehat{\eta_v} ( tx)d\eta(x)
      \\ & & + \frac{1}{t^{\alpha} } \int (\mathcal{X}_v(tx)-1) (\psi_{tv}(x)-\widehat{\eta_v} ( tx) )d\eta(x).
   \end{eqnarray*}
We observe that the function $f_v=(\mathcal{X}_v-1)\widehat{\eta_v}$ satisfies the regularity and growth conditions of Theorem \ref{rwgTh2-3} since $f_v(x)$ is bounded and $|f_v(x)|\leq 2\norm{x}$ for $\norm{x}\leq 1$. Hence the first term converges to
$$ \int (\mathcal{X}_v(x)- 1)\widehat{\eta_v}(x) d\Lambda (x).$$
The use of Corollary \ref{rwgcor4-3} shows that the second term has limit zero, hence the result follows from Remark \ref{rwgre3-9}.

\emph{Case} $\alpha=1$.
Using Corollary \ref{rwgpro3-9}, we see that
\begin{eqnarray*}
 \lefteqn{t^{-1}[k(tv)-1-i\ipro{v}{\delta(t)}]}   \\
   &=&[t\eta(\psi_{tv})]^{-1} \Bigl[ \Bigl( \eta(\psi_{tv}(\mathcal{X}_{tv}-1))-i\ipro{v}{\delta(t)} \Bigr) + i \bigl(1-\eta(\psi_{tv})\bigr)\ipro{v}{\delta(t)}  \Bigr]\\
   &=& [\eta(\psi_{tv})]^{-1} \big[\J_{11}(t)+ \J_{12}(t)+\J_{13}(t)\big],
\end{eqnarray*}
where
\begin{align*}
     \J_{11}(t)&= t^{-1}\int_V \Bigl(\widehat{\eta}_v(tx)(\mathcal{X}_{tv}(x)-1)-it\frac{\ipro{v}{x}}{1+\norm{tx}^2}\Bigr)d\eta(x), \\
     \J_{12}(t)&= t^{-1}\int_V(\psi_{tv}(x)-\widehat{\eta}_v(tx))(\mathcal{X}_{tv}(x)-1)d\eta(x),  \\
     \J_{13}(t)&=i t^{-1} \bigl(1-\eta(\psi_{tv})\bigr)\ipro{v}{\delta(t)}.
\end{align*}
By Corollary \ref{rwgcor4-3},
\begin{equation}\label{rwg4-2}
    \lim_{t\rightarrow 0_+}\J_{12}(t)=0.
\end{equation}

Next observe that the function $ f_1 (x)=\widehat{\eta}_v(x)(\mathcal{X}_{v}(x)-1)-i\frac{\ipro{v}{x}}{1+{\norm{x}^2}}$ satisfies the growth condition \eqref{rwg2-1} in Theorem \ref{rwgTh2-3}.
Indeed $f_1$ is bounded  and for $\norm{x}\leq 1$,
 \begin{eqnarray*}
     |f_1(x)|&=&|(\widehat{\eta}_v(x)-1)(\mathcal{X}_{v}(x)-1) |+|\mathcal{X}_{v}(x)-1-i\frac{\ipro{v}{x}}{1+\norm{x}^2}|   \\
     &\leq & 2\norm{v} \cdot \norm{x}\cdot  ||\mathcal{X}_{x}-1||_{\theta,\varepsilon,\lambda}+ 4(\norm{v}\cdot\norm{x})^2 \\
&\leq& 8\norm{v}\cdot \norm{x}^{1+\lambda+\varepsilon}+4(\norm{v} \cdot \norm{x})^2,
 \end{eqnarray*}
where in the last step, we  use the estimation
$$||\mathcal{X}_{x}-1||_{\theta,\varepsilon,\lambda}\leq 4\norm{x}^{\min\{1,\lambda+\varepsilon\}},$$
which can be shown by direct calculation. Thus by Theorem \ref{rwgTh2-3}, we have that
\begin{equation}\label{rwg4-3}
\lim_{t\rightarrow 0_+} \J_{11}(t)= \Lambda(f_1)=C_1(v).
\end{equation}
Now the left thing is to evaluate the term $\J_{13}(t)$.

We first need to show the following properties of $\delta(t)$:
\begin{equation}\label{rwg4-4}
    |\delta(t)|\leq \left\{
                      \begin{array}{ll}
                        K_\star|t| , & \quad ~~~|t|\geq \frac{1}{2} ;  \\
                        K_\star|t\log|t|| , &\quad ~~~|t|< \frac{1}{2},
                      \end{array}
                    \right.
\end{equation}
with $K_\star=c_1+4A/\log 2 + \int_{\norm{x}>1} \norm{x}/(1+\norm{x}^2) d\Lambda(x)$, $c_1$ a constant and $A$ given by Theorem \ref{rwgTh2-3}.
For $|t|\geq 1/2$, \eqref{rwg4-4} is obvious.

For $|t|<1/2$, we write
  \begin{align*}
     |\delta(t)|&\leq \int_{V} \norm{tx}/(1+\norm{tx}^2)d\eta(x)\\
       & = \int_{\norm{x}\leq 1}\frac{\norm{tx}}{1+\norm{tx}^2}d\eta(x)+\int_{1<\norm{x}\leq \frac{1}{|t|}}\frac{\norm{tx}}{1+\norm{tx}^2}d\eta(x)+\int_{ \norm{x}> \frac{1}{|t|}}\frac{\norm{tx}}{1+\norm{tx}^2}d\eta(x) .
  \end{align*}
  The first integral is bounded by $|t|$. By Theorem \ref{rwgTh2-3}, the third one, divided by $|t|$, converges to $\int_{\norm{x}>1} \frac{\norm{x}}{1+\norm{x}^2} d\Lambda(x)$ as $|t|$ tends to 0. Applying Theorem \ref{rwgTh2-3}, we see that
  \begin{align*}
    \int_{1<\norm{x}\leq \frac{1}{|t|}} \frac{\norm{tx}}{1+\norm{tx}^2} d\eta(x)&\leq |t| \sum_{k=0}^{|\log_2|t||} 2^{k+1} \eta(\norm{x}\geq 2^k)\\
    & \leq A|t|  \sum_{k=0}^{|\log_2|t||} 2^{k+1} 2^{-k} \leq \frac{4}{\log 2}A|t||\log|t||.
  \end{align*}
  (Here by convention, when $|\log_2|t||$ is not an integer, the summands are for  all $k$  no larger than $|\log_2|t||$ ).
   Then \eqref{rwg4-4} follows.
 Combining \eqref{rwg4-4} with Lemma \ref{rwglem4-4}  we   obtain
 \begin{equation}\label{rwg4-5}
 \lim_{t\rightarrow 0_+}\J_{13}(t)=0.
 \end{equation}
 By relations \eqref{rwg4-2},\eqref{rwg4-3} and \eqref{rwg4-5},  we have
 \begin{equation*}
  \lim_{t\rightarrow 0_+} \frac{k(tv)-1-i\ipro{v}{\delta(t)}}{|t|}=C_1(v).
\end{equation*}
\end{proof}

\begin{proof}[Proof of Theorem \ref{rwgTh1-5}]
In view of the continuity theorem, it is enough to justify that the characteristic functions of the normalized sums $S_n^x$ converge pointwise to a function which is continuous at zero  and to show full non degeneracy of the corresponding law.  The convergence follows easily from the asymptotic
 expansion of $k(tv)$ at $t=0$ given by Theorem \ref{rwgTh4-1}.
 Also if $\alpha \in [0,2]$, using formula \eqref{rwg1-5} for $ C_{\alpha}(v)$,  the non degeneracy proof is based on $ Re C_{\alpha}(v) < 0$ for $v\neq 0$ and is  the same as  in \cite{BuraczewskiGuivarch10PTRF}, since, using Theorem \ref{rwgTh2-3}, $\texttt{supp}\Lambda$ is not contained in a hyperplane and $\Delta_v\neq 0$ is $\alpha$-homogeneous. If $\alpha>2$, the argument is the same as in \cite{BuraczewskiGuivarch10PTRF} and is based on the order 2 differentiability of $k(t)$, since for $t\neq 0$
 $r(P_{tv})<1$, which follows from Theorem \ref{rwgTh3-4}. The invertibility of $I-z^*$ follows from the fact that $r(z^*)=r(z) <1$, which is itself a consequence of $r(z) =\lim_{n\rightarrow \infty} (\mathbb{E}(\mnorm{M}^n))^{1/n} \leq \lim_{n\rightarrow \infty} (\mathbb{E} (\mnorm{M_n\cdots M_1}))^{1/n}=\kappa(1)<1 $.

\end{proof}

\section{On the limit laws of the normalized Birkhoff sums}

Here we use the results of \cite{GuivarchLePage10} in order to give more precise formulas for $ C_\alpha(v)$  defined by \eqref{rwg1-5}. For a Radon measure $\rho$ we denote by $\breve{\rho}$ the push-forward of $\rho$ by the symmetry $x\rightarrow -x$.
 We recall from Section 2 that the $\rho_{\alpha}(\overline{\mu})$-stationary probability measure $\sigma_{\alpha} $ on $\mathbb{S}^{d-1}$ was defined by $\Lambda =c \sigma_\alpha \otimes \ell^\alpha$ with $c>0$. In order to write detailed  formulas  for $\Delta_v$ $(v\in V\backslash \{0\})$ we need to distinguish two cases I and II.  In case I,
 $[\texttt{supp}\overline{\mu} ]$ and $[\texttt{supp}\overline{\mu}]^*$ have no invariant convex cone and  we have $ \Delta_v= c^*(v) \sigma_\alpha^* \otimes \ell^\alpha$ where $ c^*(v)  >0$ if $v\neq 0$ and $  \sigma_\alpha^* $ is the unique $\rho_{\alpha}(\overline{\mu}^*)$-stationary probability measure on $\mathbb{S}^{d-1}$.  In case II, there are two extremal   $\rho_{\alpha}(\overline{\mu}^*)$-stationary  measures on $\mathbb{S}^{d-1}$, $\sigma_\alpha'$ and $ \sigma_\alpha''$, which are symmetric of each other ( hence $ \sigma_\alpha''=\breve{\sigma}_\alpha'$ ) and which are supported by the two  $[\texttt{supp}\overline{\mu}]^*$-minimal subsets of $\mathbb{S}^{d-1}$. Then, using Theorem C of \cite{GuivarchLePage10}, we get that there exists two nonnegative functions $c'(v)$, $c''(v)$ such that
 $$ \Delta_v= c'(v) ( \sigma'_\alpha \otimes \ell^\alpha ) + c''(v)( \sigma''_\alpha \otimes \ell^\alpha ) $$
  and $c^*(v)=c'(v)+c''(v) >0$ for $v\neq 0$.

\begin{pro}\label{rwgpro5-1}
With the above notations we have, if $\alpha \in (0,1)\cup(1,2):$

In case I,  $ \Delta_v( \widetilde{\Lambda}^1) = r_\alpha c^*(v) $,
 where $ r_\alpha= ( \sigma^*_\alpha \otimes \ell^\alpha) ( \widetilde{\Lambda}^1) <0$, $c^*(v)>0 $ if $ v\neq 0$,  $ c^*(v) $ is $ \alpha$-homogeneous, and  $ c^*(-v) =  c^*(v)$. In particular the stable limit law for $S_n^x$ is symmetric.

In case II, $ \Delta_v( \widetilde{\Lambda}^1) = c'(v) \gamma_\alpha + c'(-v) \overline{\gamma}_\alpha   $, where $ \gamma_\alpha= ( \sigma'_\alpha \otimes \ell^\alpha) ( \widetilde{\Lambda}^1)  $, $ Re \gamma_\alpha < 0 $,  $c^*(v)=c'(v)+c'(-v) >0$ if  $v\neq 0$, and $ c'(v)$ is $\alpha$-homogeneous.

\end{pro}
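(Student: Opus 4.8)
The plan is to reduce the whole statement to one elementary integral computation on the sphere, and then to read off the coefficients $c^\ast(v)$, $c'(v)$, $c''(v)$ from the explicit forms of $\Delta_v$ by exploiting the homogeneity relation $\Delta_{tv}=t^\alpha\Delta_v$ $(t>0)$ and the symmetry relation $\Delta_{-v}=\breve{\Delta}_v$ recorded in Proposition \ref{rwgpro2-5}. Two preliminary identities carry everything. First, since $\widetilde{\Lambda}^1(t\bar{y})=\widetilde{\Lambda}(\bar{y})\mathbf{1}_{[1,\infty)}(t)$ and $\int_1^\infty t^{-\alpha-1}dt=1/\alpha$, for any probability measure $\sigma$ on $\mathbb{S}^{d-1}$ one has
\[
(\sigma\otimes\ell^\alpha)(\widetilde{\Lambda}^1)=\frac{1}{\alpha}\int_{\mathbb{S}^{d-1}}\widetilde{\Lambda}(\bar{y})\,d\sigma(\bar{y}).
\]
Second, because $\Lambda$ is a positive (real) measure and $\mathcal{X}_{-y}=\overline{\mathcal{X}_y}$, the defining formulas for $\widetilde{\Lambda}$ give $\widetilde{\Lambda}(-y)=\overline{\widetilde{\Lambda}(y)}$ for $\alpha\in(0,1)\cup(1,2)$, hence $\widetilde{\Lambda}^1(-y)=\overline{\widetilde{\Lambda}^1(y)}$ and therefore $(\breve{\sigma}\otimes\ell^\alpha)(\widetilde{\Lambda}^1)=\overline{(\sigma\otimes\ell^\alpha)(\widetilde{\Lambda}^1)}$. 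Together with the property $\mathrm{Re}\,\widetilde{\Lambda}(y)<0$ for $y\neq 0$ (stated before Proposition \ref{rwgpro4-2}), these two facts do all the work.

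\emph{Case I.} Substituting $\Delta_v=c^\ast(v)\,\sigma_\alpha^\ast\otimes\ell^\alpha$ into the first identity gives immediately $\Delta_v(\widetilde{\Lambda}^1)=c^\ast(v)\,r_\alpha$ with $r_\alpha=(\sigma_\alpha^\ast\otimes\ell^\alpha)(\widetilde{\Lambda}^1)$. In Case I the $\rho_\alpha(\overline{\mu}^\ast)$-stationary measure $\sigma_\alpha^\ast$ is unique, and since $\breve{\sigma}_\alpha^\ast$ is again stationary (the projective action and $\norm{g\bar x}$ are unchanged by the antipodal map) uniqueness forces $\breve{\sigma}_\alpha^\ast=\sigma_\alpha^\ast$. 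Combined with the conjugation identity this yields $r_\alpha=\overline{r_\alpha}$, so $r_\alpha$ is real, and $\mathrm{Re}\,\widetilde{\Lambda}<0$ then forces $r_\alpha<0$. The same invariance $\breve{\sigma}_\alpha^\ast=\sigma_\alpha^\ast$ applied to $\Delta_{-v}=\breve{\Delta}_v$ gives $c^\ast(-v)=c^\ast(v)$, while comparing $\Delta_{tv}=c^\ast(tv)\,\sigma_\alpha^\ast\otimes\ell^\alpha$ with $t^\alpha\Delta_v$ gives $c^\ast(tv)=t^\alpha c^\ast(v)$. Since $C_\alpha(v)=\alpha m_\alpha\Delta_v(\widetilde{\Lambda}^1)=\alpha m_\alpha c^\ast(v) r_\alpha$ is then real and even in $v$, the function $\Phi_\alpha=e^{C_\alpha}$ is real and even, so the stable limit law is symmetric.

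\emph{Case II.} Writing $\Delta_v=c'(v)(\sigma'_\alpha\otimes\ell^\alpha)+c''(v)(\sigma''_\alpha\otimes\ell^\alpha)$ and using $\sigma''_\alpha=\breve{\sigma}'_\alpha$ together with the conjugation identity gives $(\sigma''_\alpha\otimes\ell^\alpha)(\widetilde{\Lambda}^1)=\overline{\gamma}_\alpha$, whence $\Delta_v(\widetilde{\Lambda}^1)=c'(v)\gamma_\alpha+c''(v)\overline{\gamma}_\alpha$ with $\gamma_\alpha=(\sigma'_\alpha\otimes\ell^\alpha)(\widetilde{\Lambda}^1)$, and $\mathrm{Re}\,\gamma_\alpha<0$ is immediate from $\mathrm{Re}\,\widetilde{\Lambda}<0$. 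It remains to identify $c''(v)=c'(-v)$. For this I would apply $\Delta_{-v}=\breve{\Delta}_v$, observe that $\breve{(\sigma'_\alpha\otimes\ell^\alpha)}=\sigma''_\alpha\otimes\ell^\alpha$ and $\breve{(\sigma''_\alpha\otimes\ell^\alpha)}=\sigma'_\alpha\otimes\ell^\alpha$, and then match coefficients: this matching is legitimate because $\sigma'_\alpha$ and $\sigma''_\alpha$ have disjoint supports (the two $[\texttt{supp}\,\overline{\mu}]^\ast$-minimal subsets), so the two-term decomposition is unique. This gives $c'(-v)=c''(v)$, hence $\Delta_v(\widetilde{\Lambda}^1)=c'(v)\gamma_\alpha+c'(-v)\overline{\gamma}_\alpha$. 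The same uniqueness applied to $\Delta_{tv}=t^\alpha\Delta_v$ yields $c'(tv)=t^\alpha c'(v)$, and $c^\ast(v)=c'(v)+c''(v)=c'(v)+c'(-v)>0$ for $v\neq 0$ is the input from Theorem C of \cite{GuivarchLePage10}.

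The only genuinely delicate point is the legitimacy of reading off coefficients: the reality of $r_\alpha$ in Case I and the uniqueness of the two-term decomposition in Case II. Both rest on structural input from \cite{GuivarchLePage10}, namely uniqueness (hence antipodal symmetry) of $\sigma_\alpha^\ast$ in the no-cone case, and disjointness of the supports of $\sigma'_\alpha,\sigma''_\alpha$ in the cone case. Everything else is bookkeeping with the homogeneity and symmetry relations of Proposition \ref{rwgpro2-5} and the two preliminary identities above.
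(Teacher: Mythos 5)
Your proof is correct and follows essentially the same route as the paper: substitute the structural forms of $\Delta_v$, exploit the relations $\Delta_{tv}=t^{\alpha}\Delta_v$ and $\Delta_{-v}=\breve{\Delta}_v$ from Proposition \ref{rwgpro2-5}, and in Case II match coefficients using the disjointness of $\texttt{supp}\,\sigma'_\alpha$ and $\texttt{supp}\,\sigma''_\alpha$. The only harmless local differences are that you derive the symmetry of $\sigma^*_\alpha$ from uniqueness of the $\rho_\alpha(\overline{\mu}^*)$-stationary measure (antipodal invariance of the stationarity equation) rather than directly from $\Delta_{-v}=\breve{\Delta}_v$ as the paper does, and that you obtain $r_\alpha<0$ and $\mathrm{Re}\,\gamma_\alpha<0$ directly by integrating $\mathrm{Re}\,\widetilde{\Lambda}<0$ over the sphere, whereas the paper invokes $\mathrm{Re}\,C_\alpha(v)<0$.
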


  \begin{proof}
  In view of the above observations, it remains to study  $ c^*(v)$,  $\sigma_\alpha^* $, $ c'(v)$,  $c''(v)$. This follows from Proposition \ref{rwgpro2-5}, in particular from the relations
  \begin{equation*}
     \Delta_{tv}= t^{\alpha} \Delta_{v} \mbox{ for }  t>0  \mbox{ and }\Delta_{-v}= \breve{\Delta}_v .
  \end{equation*}

 In case I, using $ \Delta_v= c^*(v) ( \sigma_\alpha^* \otimes \ell^\alpha)$  and the symmetry of $ \Delta_v$, $ \Delta_{-v}$, we get that $  \sigma_\alpha^*$ is symmetric and $ c^*(v)=c^*(-v)$.
 The symmetry of $  \sigma_\alpha^*$  gives that  $ r_\alpha= ( \sigma^*_\alpha \otimes \ell^\alpha) ( \widetilde{\Lambda}^1) $ is real and the condition $ Re C_\alpha(v)< 0$ gives $ r_\alpha <0$.

 In case II, the symmetry of $ \Delta_v$, $ \Delta_{-v}$ gives:
 \begin{equation*}
    c'(-v)( \sigma_\alpha' \otimes \ell^\alpha)+ c''(-v) ( \sigma_\alpha''\otimes \ell^\alpha)
    =  c'(v)( \breve{\sigma}_\alpha' \otimes \ell^\alpha)+ c''(v) ( \breve{\sigma}_\alpha''\otimes \ell^\alpha)
 \end{equation*}
 Since $\sigma_\alpha'$ and $\sigma_\alpha''=\breve{\sigma}_\alpha'$ are supported by  disjoint sets, we have
 $c'(-v)= c''(v)$. Also since $ \gamma_\alpha= ( \sigma'_\alpha \otimes \ell^\alpha) ( \widetilde{\Lambda}^1)  $,  we have   $ ( \sigma_\alpha''\otimes \ell^\alpha)(\widetilde{\Lambda}^1 )= ( \breve{\sigma}_\alpha'\otimes \ell^\alpha)(\widetilde{\Lambda}^1 )  = \overline{\gamma}_\alpha$.

 The homogeneity of $ c^*(v)$, $c'(v)$  follows from the relation $ \Delta_{tv}=t^\alpha \Delta_v$ if $t>0$.
  \end{proof}

  Few informations on the constant $c$, which enters  in the expression of $C_{\alpha } (v)$, seem to be available in the literature for $d>1$. See
\cite{Goldie91AAP,GuivarchLePage08spectral} for $d=1$. Furthermore, in order to deal with estimation problems in extreme value analysis of generalized GARCH models (see \cite{Mikosch2000}),
  we need to have control on the  function $C_\alpha(v)$. To go further, we  use the results of \cite{GuivarchLePage10}; hence we complete the notations already introduced. For $s\in [0,s_\infty)$ we denote by $\nu^*_s $ the unique probability on $\mathbb{P}^{d-1}$ which satisfies
\begin{equation*}
  (\nu^*_s\otimes \ell^s)\overline{P}_*= \kappa(s)  (\nu^*_s\otimes \ell^s)
\end{equation*}
and we write $p(s)= \int \norm{\ipro{\bar{x}}{\bar{y}} }^s d \nu_s(\bar{x}) d\nu^*_s(\bar{y})$. We consider the function $ e_s$ on $\mathbb{P}^{d-1}$ (or $\mathbb{S}^{d-1}$) given by
\begin{equation*}
 p(s)e_s(\bar{x})= \int \norm{\ipro{\bar{x}}{\bar{y}} }^s  d\nu^*_s(\bar{y}),
\end{equation*}
so that $\nu_s(e_s)=1$.

We know from \cite[Theorem 2.6]{GuivarchLePage10} that $e_s$ is continuous, positive and that the function $f_s$ on $V$ defined by $f_s(v)=e_s(\bar{v})|v|^s $ satisfies $ \bar{P} f_s= \kappa (s) f_s$.

In case II, there exist two probability measures $\theta_s$ (resp. $\theta^*_s$) on $\mathbb{S}^{d-1}$, which are symmetric to each other and are extremal solutions of the equation
\begin{equation*}
  (\theta_s\otimes \ell^s )\bar{P} = \kappa(s) (\theta_s\otimes \ell^s) \quad \Big({\mbox{resp. } (\theta^*_s\otimes \ell^s )\bar{P}_* = \kappa(s) (\theta^*_s\otimes \ell^s)}\Big).
\end{equation*}
 We denote these solutions by $\sigma_{s,+}, \sigma_{s,-}$(resp.  $\sigma'_{s}, \sigma''_s $). We define $c_+, c_-$ by $c\sigma_\alpha= c_+ \sigma_{\alpha,+} + c_-\sigma_{\alpha,-}$.

 Define the function $e_{s,+}$ on $\mathbb{S}^{d-1}$ by
 \begin{equation*}
    p(s)e_{s,+}(\bar{x}) = \int  \ipro{\bar{x}}{\bar{y}}_{+}^s  d\sigma'_s(\bar{y})
 \end{equation*}
 where $\ipro{\bar{x}}{\bar{y}}_{+}= \sup(\ipro{\bar{x}}{\bar{y}},0 )$. So that $\sigma_{s,+}(e_{s,+})=1$, and the function $f_{s,+}$ on $V$ given by $ f_{s,+}(v) = e_{s,+}(\bar{v})|v|^{s}$ satisfies $ \bar{P} f_{s,+}= \kappa(s) f_{s,+}$.

 We will use the quantities $d=c_+-c_-, d^*(v)= c'(v)-c'(-v)$.  For $\theta\geq 0 $ we will also consider the Banach space $\mathbb{C}_{\theta}$ already introduced in Section 3, and the weak topology on its dual space, a space which consists of the finite measures on $V$  with finite moment of order $\theta$.
   This topology will be called weak topology of order $\theta$. With the notations of Section 2, we consider the law $\eta'$ of the random variable $R-Q$, where
\begin{equation*}
  R=Q+ \sum_{k=1}^{\infty} M_1M_2\cdots M_k Q_{k+1}.
\end{equation*}
 This measure $\eta'$ plays an important role in the discussion of $ C_{\alpha}(v)$, due to the following proposition, first part of which extends previous results of (\cite{Goldie91AAP,BuraczewskiGuivarch09PTRF}).

\begin{pro}\label{rwgpro5-2}
 With the above notations, we have for $0<s<\alpha:$
 \begin{equation*}
   (\eta-\eta')(f_s) =(1-\kappa(s)) \eta(f_s), \quad (\mbox{ resp. } (\eta-\eta')(f_{s,+}) =(1-\kappa(s)) \eta(f_{s,+})  ).
 \end{equation*}
The function $(\alpha -s) \eta(f_s)$ $(resp. (\alpha -s) \eta(f_{s,+}) )$ extends analytically to $[0,\alpha+\delta]$ and
\begin{eqnarray*}
   && \lim_{s\rightarrow \alpha-} (\alpha -s) \eta(f_s)= m_{\alpha}^{-1}(\eta-\eta')(f_\alpha)=c  \\
   (resp. &&   \lim_{s\rightarrow \alpha-} (\alpha -s) \eta(f_{s,+})=m_{\alpha}^{-1} (\eta-\eta')(f_{\alpha,+})=c_+).
\end{eqnarray*}
In particular, if $ \tau$ is a probability  on $V$ and $\mu=\tau \otimes \bar{\mu}$ on $ H= V\rtimes G$, then $c ~(resp.~ c_+)$ depends continuously on $\tau $ in the weak topology of order $\alpha$.
\end{pro}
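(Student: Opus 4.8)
The plan rests on the single identity $\eta'=\eta\overline{P}$ together with the eigenrelations $\overline{P}f_s=\kappa(s)f_s$ and $\overline{P}f_{s,+}=\kappa(s)f_{s,+}$. Indeed $R-Q=\sum_{k\geq1}M_1\cdots M_kQ_{k+1}=M_1 R''$, where $R''=Q_2+\sum_{k\geq2}M_2\cdots M_kQ_{k+1}$ has law $\eta$ and is independent of $M_1\sim\overline{\mu}$; hence $\eta'(\varphi)=\mathbb{E}\,\varphi(M_1R'')=\eta(\overline{P}\varphi)$ for every test function $\varphi$, i.e. $\eta'=\eta\overline{P}$. Since $f_s$ has growth of order $s<\alpha$, Theorem \ref{rwgTh2-3} gives $\eta(f_s)<\infty$, so $\eta'(f_s)=\eta(\overline{P}f_s)=\kappa(s)\eta(f_s)$ and therefore $(\eta-\eta')(f_s)=(1-\kappa(s))\eta(f_s)$; the statement for $f_{s,+}$ is identical. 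I write $N(s):=(\eta-\eta')(f_s)$, so that $(\alpha-s)\eta(f_s)=\frac{\alpha-s}{1-\kappa(s)}\,N(s)$ for $0<s<\alpha$.

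For the analytic extension I would argue as follows. The function $\kappa$ is analytic on $[0,s_\infty)$ (Remark after Proposition \ref{rwgpro2-1}) and, being strictly log-convex with $\kappa(0)=\kappa(\alpha)=1$, the only zeros of $1-\kappa$ on $[0,\alpha+\delta]$ are the simple zeros at $0$ and $\alpha$ (simple because $\kappa'(0_+)=L(\overline{\mu})\neq0$ and $\kappa'(\alpha)=m_\alpha\neq0$). Thus $(\alpha-s)/(1-\kappa(s))$ is analytic near $\alpha$, while near $0$ the possible pole of $N(s)/(1-\kappa(s))$ is cancelled by $N(0)=(\eta-\eta')(\mathbf{1})=0$ (note $f_0\equiv\mathbf{1}$). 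The crux is that $N(s)$ itself extends analytically past $\alpha$, even though $\eta(f_\alpha)=\eta'(f_\alpha)=+\infty$: writing $N(s)=\mathbb{E}\big[f_s(Q+M R'')-f_s(MR'')\big]$, the zeroth order term cancels and the increment is small. With $w=MR''$, for $\norm{q}\le\norm{w}/2$ one has $|f_s(q+w)-f_s(w)|\le C\big(\norm{w}^{s-1}\norm{q}+\norm{q}^{\gamma}\norm{w}^{s-\gamma}\big)$ (using the radial smoothness of $\norm{\cdot}^s$ and the H\"older continuity, exponent $\gamma$, of $e_s$ on $\mathbb{S}^{d-1}$), while for $\norm{q}>\norm{w}/2$ one has $|f_s(q+w)-f_s(w)|\le C\norm{q}^s$. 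Redistributing the surplus powers of $\norm{q}$ against powers of $\norm{w}$ and using condition $C_3$ (finiteness of $\mathbb{E}\norm{Q}^{\alpha+\delta}$ and $\mathbb{E}\,v(M)^{\alpha+\delta}$, hence of the moments of $w$ of every order $<\alpha$), this dominates the integrand by a fixed integrable function for $s$ in a complex neighbourhood of $[0,\alpha+\delta]$. Since $s\mapsto f_s(x)=e_s(\bar x)\norm{x}^s$ is analytic (by analytic perturbation theory applied to the family $s\mapsto\rho_s(\overline{\mu})$, whose dominant eigenvalue $\kappa(s)$ is simple, so that $\kappa(s),\nu_s,e_s$ are analytic in $s$), Morera's theorem yields analyticity of $N$, hence of $(\alpha-s)\eta(f_s)$, on $[0,\alpha+\delta]$. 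This cancellation estimate, which makes $f_\alpha$ integrable against the signed measure $\eta-\eta'$, is the step I expect to be the main obstacle.

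Evaluating the analytic extension at $s=\alpha$ gives $\lim_{s\to\alpha_-}(\alpha-s)\eta(f_s)=\frac{1}{m_\alpha}N(\alpha)=m_\alpha^{-1}(\eta-\eta')(f_\alpha)$. To identify this limit with $c$ I would compute it a second time directly from the homogeneity of $\eta$, following the proof of Proposition \ref{rwgpro4-2}. Writing $\eta(f_s)=\int_0^\infty sG_s(r)r^{s-1}dr$ with $G_s(r)=\int_{\norm{x}\ge r}e_s(\bar x)\,d\eta(x)$, Theorem \ref{rwgTh2-3} gives $r^\alpha G_s(r)=\Lambda(e_s^1)+c_s(r)$, where $e_s^1(x)=e_s(\bar x)\mathbf{1}_{[1,\infty)}(\norm{x})$ and $c_s(r)\to0$ as $r\to\infty$, uniformly in $s\in[0,\alpha]$ since $e_s$ is bounded and continuous on $[0,\alpha]\times\mathbb{S}^{d-1}$. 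Splitting the $r$-integral at $\rho(s)=(\alpha-s)^{-1/(2\alpha)}$ exactly as in Proposition \ref{rwgpro4-2}, the two outer contributions vanish and the middle one yields $\lim_{s\to\alpha_-}(\alpha-s)\eta(f_s)=\alpha\Lambda(e_\alpha^1)$. Finally $\Lambda(e_\alpha^1)=c(\sigma_\alpha\otimes\ell^\alpha)(e_\alpha^1)=c\,\sigma_\alpha(e_\alpha)\int_1^\infty t^{-\alpha-1}dt=\frac{c}{\alpha}\,\sigma_\alpha(e_\alpha)$, and since $e_\alpha$ is even, hence defined on $\mathbb{P}^{d-1}$, and $\sigma_\alpha$ projects to $\nu_\alpha$ with $\nu_\alpha(e_\alpha)=1$, we obtain $\alpha\Lambda(e_\alpha^1)=c$. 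Comparing the two evaluations gives $m_\alpha^{-1}(\eta-\eta')(f_\alpha)=c$. The resp. statement follows verbatim with $f_{s,+},e_{s,+},\sigma_{\alpha,+}$, using $\sigma_{\alpha,+}(e_{\alpha,+})=1$ and $\sigma_{\alpha,-}(e_{\alpha,+})=0$ (support properties of the extremal measures in \cite{GuivarchLePage10}) together with $c\sigma_\alpha=c_+\sigma_{\alpha,+}+c_-\sigma_{\alpha,-}$, which give $\alpha\Lambda(e_{\alpha,+}^1)=c_+$.

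For the last assertion, take $\mu=\tau\otimes\overline{\mu}$ and keep $\overline{\mu}$ fixed. Then $\kappa,\alpha,m_\alpha,f_\alpha,e_\alpha,\sigma_\alpha,\nu_\alpha$ depend on $\overline{\mu}$ only, so by the formula just proved $c=m_\alpha^{-1}(\eta-\eta')(f_\alpha)$ depends on $\tau$ solely through $(\eta-\eta')(f_\alpha)=\mathbb{E}\big[f_\alpha(Q+MR'')-f_\alpha(MR'')\big]$. The map $\tau\mapsto$ law of the perpetuity $R$ is continuous for all moments of order $<\alpha$, and the same increment bound as above dominates the integrand by a function whose $\tau$-dependence enters only through $\mathbb{E}\norm{Q}^\alpha=\tau(\norm{\cdot}^\alpha)$ and through moments of $w=MR''$ of order $<\alpha$; both are continuous in the weak topology of order $\alpha$. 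Hence $(\eta-\eta')(f_\alpha)$, and therefore $c$ (and likewise $c_+$), depends continuously on $\tau$ in the weak topology of order $\alpha$.
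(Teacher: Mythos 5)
Your proposal is correct, and its skeleton is the same as the paper's: the identity $\eta'=\eta\overline{P}$ giving $(\eta-\eta')(f_s)=(1-\kappa(s))\eta(f_s)$, the factorization $(\alpha-s)\eta(f_s)=\frac{\alpha-s}{1-\kappa(s)}\,(\eta-\eta')(f_s)$ with the simple zeros of $1-\kappa$ at $0$ and $\alpha$, the Karamata-style tail computation with the cutoff $\rho(s)=(\alpha-s)^{-1/(2\alpha)}$ and the uniform $o(1)$ from Theorem \ref{rwgTh2-3}, and continuity in $\tau$ via dominated convergence of the increment plus weak convergence of the stationary measures. The one genuine divergence is in the step you correctly single out as the crux, the analytic extension of $N(s)=(\eta-\eta')(f_s)$ past $\alpha$. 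The paper does this by invoking the representation $p(s)f_s(v)=\int \norm{\ipro{v}{\bar y}}^s\, d\nu_s^*(\bar y)$ from \cite{GuivarchLePage10}, which linearizes $f_s$ and yields at once the domination $p(s)\lvert f_s(R)-f_s(R-Q)\rvert\leq \widehat{s}\,\norm{Q}^s\leq \max(\alpha+\delta,1)(1+\norm{Q}^{\alpha+\delta})$, so only moments of $Q$ (condition $C_3$) enter. You instead estimate the increment of $e_s(\bar x)\norm{x}^s$ directly, in two regimes, which forces you to import H\"older continuity of $e_s$ on $\mathbb{S}^{d-1}$, uniformly in $s$ --- true, and available from the spectral-gap theory of \cite{GuivarchLePage10}, but not stated in this paper --- and to perform a H\"older-inequality redistribution between moments of $Q$ and of $MR''$; note that this redistribution may only give the extension on $[0,\alpha+\delta']$ for some $\delta'<\delta$ (e.g.\ the term $\norm{q}^{\gamma}\norm{w}^{s-\gamma}$ needs $s-\gamma<\alpha$), which is harmless since only the behavior at $s=\alpha$ is used. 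What each route buys: the paper's is shorter and needs no regularity of $e_s$ beyond continuity; yours is self-contained modulo that regularity, and in fact your redistribution argument is exactly what is needed to repair the paper's own bound in the regime $s>1$, where the stated inequality $p(s)\lvert f_s(v)-f_s(v')\rvert\leq \widehat{s}\norm{v-v'}^s$ is not literally valid (one only gets $s\norm{v-v'}\max(\norm{v},\norm{v'})^{s-1}$, and integrability then requires precisely a H\"older split of the type you use).
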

\begin{proof}
We denote $ \widehat{s}=\max (s,1)$, $K_1= \sup \{ e_s(\bar{v}); s\in [0,\alpha+\delta], \bar{v} \in \mathbb{S}^{d-1}\}$ and take  $ \epsilon\in (0,1) $ such that $0<s\leq \alpha -\epsilon$. Since $ f_{s}(v)= e_s(\bar{v})|v|^s$, Proposition \ref{rwgpro2-2} gives that $ f_s(R)$ is dominated on $[0, \alpha-\epsilon]$ by $K_1 \mathbb{E} (1+| R|^{\alpha-\epsilon}) <+\infty $. Hence $\eta (f_s)= \mathbb{E} (f_s(R)) $  defines a continuous function on $[0,\alpha)$. The same argument is valid for $ \eta'(f_s)=\mathbb{E} (f_s(R-Q))$. With the notations of Section 2, we have $R= Q+MR^1$,  where $R^1$ is independent of $(Q,M)$ and has the same law as $R$. It follows  $\eta-\eta' = \eta-\eta \bar{P}$, hence on $[0,\alpha)$,
\begin{equation*}
  (\eta-\eta')(f_s)= \eta(f_s)- \eta(\bar{P}f_s) =(1-\kappa(s))\eta(f_s),
\end{equation*}
\begin{equation*}
(\alpha-s)\eta(f_s)= \frac{\alpha-s}{1-\kappa(s)} (\eta-\eta')(f_s).
\end{equation*}
 Using the strict convexity of $\kappa(s)$ given by  Proposition \ref{rwgpro2-1}, we know that
\begin{equation*}
  \lim_{s\rightarrow \alpha-} \frac{\alpha-s}{1-\kappa(s)}= \frac{1}{m_\alpha} \mbox{ and  } 1-\kappa(s)\neq 0  \mbox{ for  } s\neq 0,\alpha,
\end{equation*}
hence $  \frac{\alpha-s}{1-\kappa(s)}$ defines a continuous function on $[0, \alpha+\delta]$. On the other hand, we have $ (\eta -\eta') (f_s)= \mathbb{E} (f_s(R) -f_s(R-Q))$.   But from above,  we have $ p(s) f_s(v) = \int \norm{\ipro{v}{\bar{y}}}^s d \nu_s^*(y)$, hence
\begin{equation*}
   p(s) |f_s(v)-f_s(v')| \leq \widehat{s} |v-v'|^s.
\end{equation*}
\begin{equation*}
   p(s) |f_s(R)-f_s(R-Q)| \leq \widehat{s} |Q|^s \leq \max(\alpha+\delta,1) (1+ |Q|^{\alpha+\delta}).
\end{equation*}
Hence $ f_s(R) -f_s(R-Q) $  is dominated on $ [0, \alpha+\delta]$ by the $ \mathbb{P}$-integrable function  $\max(1,\alpha+\delta) (1+ |Q|^{\alpha+\delta}) $.  It follows that $ (\eta-\eta')(f_s)$ is well defined as an analytical function  on $[0,\alpha +\delta]$ and gives the required  extension of $ (\alpha-s) \eta(f_s)$ to $[0, \alpha +\delta]$.

Next we are going to prove the formula
\begin{equation*}
\lim\limits_{s\rightarrow \alpha-} (\alpha-s) \eta(f_s) =c.
\end{equation*}
The proof is similar to the calculation of the limit of $\widehat{C}_s(v)$ in the proof of Proposition \ref{rwgpro2-5}, hence we give only a sketch.

Denote  $ H_s(t) = \int e_s(\bar{v})\mathbf{1}_{[t,\infty)}(|v|) d\eta(v)$ for $s\in [0,\alpha], t\in (0,\infty)$.
Then we will get that
     \begin{eqnarray*}
       \eta(f_s)  &=& \int e_s(\bar{v}) |v|^s d \eta(v) = \int_V  \bigg( \int_0^{|v|} st^{s-1} dt \bigg)e_s(\bar{v}) d \eta(v)  \\
        &=& \int_0^\infty st^{s-1} H_s(t) dt.
     \end{eqnarray*}
Observe that  for $ s\in [0,\alpha], t\in (0,\infty)$, $|H_s(t)|\leq K_1 <\infty. $
If we denote $\eta^t= t^{\alpha}( t^{-1}.\eta)$, then
\begin{equation*}
 t^{\alpha} H_{s}(t)= \int \mathbf{1}_{[1,\infty)} (|v|)e_s(\bar{v}) d \eta^t(v).
\end{equation*}
By Theorem \ref{rwgTh2-3}, the equicontinuity of the family $e_s(\bar{v})$,  and the fact that $ e_s(\bar{v})$ is bounded by $K_1$ with $\Lambda$-negligible discontinuities, we get for $t$ large,
    \begin{equation*}
      t^{\alpha} H_s(t)= c\sigma_\alpha(e_s) \ell^{\alpha}(1,\infty) + \epsilon_s(t)=c\alpha^{-1} \sigma_\alpha(e_s)  + \epsilon_s(t),
    \end{equation*}
where $\epsilon_s(t)=o(1)$ as $t\rightarrow\infty$ uniformly in $s\in [0,\alpha]$.
As before, we take a function $\rho(s)$on $ [0,\alpha)$, which satisfies
 \begin{equation*}
 \lim_{s\rightarrow \alpha_-} \rho(s)=+\infty,\quad  \lim_{s\rightarrow \alpha_-} (\alpha-s)\rho^s(s) =0,\quad  \lim_{s\rightarrow \alpha_-} \rho^{s-\alpha}(s)=1.
 \end{equation*}
 Now we decompose the integral $(\alpha-s) \eta(f_s)$:
  \begin{eqnarray*}
    \lefteqn{(\alpha-s) \eta(f_s)  =  (\alpha-s) \int_0^{\rho(s)} sH_s(t)t^{s-1}dt } \\
     &&{ } + (\alpha - s) \int_{\rho(s)}^{\infty}s  t^{-\alpha+s-1} c\alpha^{-1} \sigma_\alpha(e_s) dt  + (\alpha-s) \int_{\rho(s)}^{\infty} c_s(t) t^{-\alpha+s-1} d t .
  \end{eqnarray*}
 The first term and the third term tend to zero, and the second term tends to $c$. So we get that $ \lim\limits_{s\rightarrow \alpha-} (\alpha-s) \eta(f_s)=c $.


The same proof gives the corresponding formula  $ \lim\limits_{s\rightarrow \alpha-} (\alpha -s)\eta(f_{s,+}) =c_+$.

 In order to show the last assertion,  we use the formula  $ m_\alpha c =(\eta-\eta') (f_\alpha)$ and we observe that $ (\eta-\eta') (f_{\alpha} ) =\eta(\bar{\tau}) $ with $ \bar{\tau}(v) =   \int (f_{\alpha}(v)-f_{\alpha}(v- q) ) d { \tau} (q) $.
 We note the following  four properties of $\bar{\tau},\eta$:
 \begin{eqnarray*}
     && |\bar{\tau}(v)|\leq K_1\int |q|^{\alpha} d\tau(q),  \\
     && |\bar{\tau}(v)-\bar{\tau}(v') | \leq 2K_1 \max(\alpha,1) |v-v'|^{\alpha},  \\
     && \int |v|^{\epsilon} d\eta(v) \leq C\int |q|^{\epsilon} d\tau(q) \mbox{~ with ~} C= \mathbb{E} \Big( 1+\sum_{k=1}^{\infty} |M_1\cdots M_{k}|^{\epsilon} \Big)<\infty.\\
 &&\mbox{ If }\lim_{n\rightarrow \infty} \tau_n=\tau, \mbox{ then } \lim_{n\rightarrow \infty} \eta_n=\eta.
 \end{eqnarray*}
 In the last property the limits are taken in weak topology and $\eta_n$  is the stationary measure corresponding to $\tau_n$.

 The continuity of $c$ depending on $\tau$ follows since if $\tau_n$ converges to $\tau$ in the weak topology of order $\alpha$, then if
 $\bar{\tau}_n(v)= \int (f_\alpha(v)-f_{\alpha}(v-q)) d\tau_n (q) $,   the first two properties above imply the dominated convergence of $\bar{\tau}_n$ to $\bar{\tau}$, and the last one gives
the convergence of $\eta_n(\bar{\tau}_n)$ to $\eta(\bar{\tau})$.

 The proofs of the first two formulae are based on the definition of $\bar{\tau}$.  The third formula follows from Proposition \ref{rwgpro2-2}.

 For the last property, we know that,  because of the third property, the sequence $\eta_n$ is relatively compact in the weak topology. If $ P_n$ is the convolution operator on $V$ corresponding to $ \mu_n= \tau_n \otimes \bar{\mu} $, and if the subsequence $\eta_{n_k} $ converges weakly to $\eta^1$, then $\eta_{n_k} P_{n_k}$ converges weakly to $ \eta^1 P$. Hence $\eta^1P=\eta^1 $,  $\eta^1=\eta$ and $\eta_n$  converges weakly to $\eta$. The analogous result for $c_+$ follows from a corresponding argument.
 \end{proof}

 The formula $C_\alpha (v) =\alpha m_{\alpha} \Delta_{v} (\widetilde{\Lambda}^1 )$ can be made more explicit as follows
 \begin{pro}
 With the above notations, for $\alpha \in (0,2)$, $\alpha\neq 1$, we have:

 In case I: $C_\alpha (v) =-m_\alpha \frac{\Gamma (1-\alpha)}{\alpha } p(\alpha) cc^*(v) \cos \frac{\alpha \pi}{2}$

 In case II:  $C_\alpha (v) =-m_\alpha \frac{\Gamma (1-\alpha)}{\alpha } p(\alpha) \Big(c c^* (v)\cos \frac{\alpha \pi}{2}- idd^*(v)\sin \frac{\alpha \pi}{2} \Big)$.
       In particular, $C_\alpha (v)$ is real if and only if $c_+=c_-$.
 \end{pro}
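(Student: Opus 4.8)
The plan is to combine the identity $C_\alpha(v)=\alpha m_\alpha\Delta_v(\widetilde{\Lambda}^1)$ (Propositions \ref{rwgpro4-2} and \ref{rwgpro5-1}) with an explicit evaluation of the spherical ``Fourier transform'' $\widetilde{\Lambda}$. First I would compute $\widetilde{\Lambda}(\bar y)$ for a unit vector $\bar y$ by writing $\Lambda=c\,\sigma_\alpha\otimes\ell^\alpha$ in polar coordinates $x=t\omega$ and carrying out the radial integral. For both ranges $0<\alpha<1$ and $1<\alpha<2$ the classical stable radial integral, with the linear compensator $-iat$ present exactly when $1<\alpha<2$,
$$\int_0^\infty\bigl(e^{iat}-1-iat\,\mathbf 1_{(1,2)}(\alpha)\bigr)\frac{dt}{t^{\alpha+1}}=\Gamma(-\alpha)\,|a|^\alpha\,e^{-i\frac{\pi\alpha}{2}\mathrm{sgn}(a)},$$
yields the uniform formula
$$\widetilde{\Lambda}(\bar y)=c\,\Gamma(-\alpha)\int_{\mathbb S^{d-1}}|\ipro{\bar y}{\omega}|^\alpha\,e^{-i\frac{\pi\alpha}{2}\mathrm{sgn}\ipro{\bar y}{\omega}}\,d\sigma_\alpha(\omega).$$
Since $\widetilde{\Lambda}^1(y)=\widetilde{\Lambda}(\bar y)\mathbf{1}_{[1,\infty)}(\norm y)$, pairing against a measure $\sigma\otimes\ell^\alpha$ produces the factor $\int_1^\infty t^{-\alpha-1}\,dt=1/\alpha$ and reduces everything to double integrals over $\mathbb S^{d-1}\times\mathbb S^{d-1}$; splitting $e^{-i\frac{\pi\alpha}{2}\mathrm{sgn}(\cdot)}=\cos\frac{\pi\alpha}{2}-i\,\mathrm{sgn}(\cdot)\sin\frac{\pi\alpha}{2}$ separates a symmetric ($\cos$) part from an antisymmetric ($\sin$) part.

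In Case I, where $\sigma_\alpha$ and $\sigma_\alpha^*$ are symmetric, the antisymmetric part vanishes; using $r_\alpha=(\sigma_\alpha^*\otimes\ell^\alpha)(\widetilde{\Lambda}^1)$ and recognising the double spherical integral of $|\ipro{\bar y}{\omega}|^\alpha$ as $p(\alpha)$ (the integrand descends to $\mathbb P^{d-1}$, where it integrates against $\nu_\alpha\otimes\nu_\alpha^*$), I obtain $r_\alpha=\frac{c\Gamma(-\alpha)}{\alpha}\cos\frac{\pi\alpha}{2}\,p(\alpha)$. Substituting into $C_\alpha(v)=\alpha m_\alpha r_\alpha c^*(v)$ and using $\Gamma(-\alpha)=-\Gamma(1-\alpha)/\alpha$ gives the Case I formula. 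In Case II I would write $\gamma_\alpha=(\sigma_\alpha'\otimes\ell^\alpha)(\widetilde{\Lambda}^1)$, so that $\alpha\gamma_\alpha/\Gamma(-\alpha)=\cos\frac{\pi\alpha}{2}\,c\,p(\alpha)-i\sin\frac{\pi\alpha}{2}\,cJ$, where $cJ=\iint\mathrm{sgn}\ipro{\bar y}{\omega}\,|\ipro{\bar y}{\omega}|^\alpha\,d(c\sigma_\alpha)(\omega)\,d\sigma_\alpha'(\bar y)$. Inserting this and its conjugate into $\Delta_v(\widetilde{\Lambda}^1)=c'(v)\gamma_\alpha+c'(-v)\overline{\gamma}_\alpha$ collapses the real part onto $c^*(v)=c'(v)+c'(-v)$ and the imaginary part onto $d^*(v)=c'(v)-c'(-v)$, so the whole statement reduces to the identity $cJ=d\,p(\alpha)$ with $d=c_+-c_-$.

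To prove this identity I would use $\mathrm{sgn}(a)|a|^\alpha=a_+^\alpha-a_-^\alpha$ (with $a_\pm=\max(\pm a,0)$), decompose $c\sigma_\alpha=c_+\sigma_{\alpha,+}+c_-\sigma_{\alpha,-}$, and exploit the symmetries $\sigma_{\alpha,-}=\breve{\sigma}_{\alpha,+}$, $\sigma_\alpha''=\breve{\sigma}_\alpha'$. Setting $G(\mu,\nu)=\iint\ipro{\bar y}{\omega}_+^\alpha\,d\mu(\omega)\,d\nu(\bar y)$, the relation $\ipro{\bar y}{\omega}_-=\ipro{-\bar y}{\omega}_+$ gives $\int\mathrm{sgn}|\cdot|^\alpha\,d\sigma_{\alpha,a}\,d\sigma_\alpha'=G(\sigma_{\alpha,a},\sigma_\alpha')-G(\sigma_{\alpha,a},\sigma_\alpha'')$, and the joint reflection $(\omega,\bar y)\mapsto(-\omega,-\bar y)$, which fixes $\ipro{\cdot}{\cdot}_+$ and swaps the $\pm$ labels, yields $G(\sigma_{\alpha,-},\sigma_\alpha')=G(\sigma_{\alpha,+},\sigma_\alpha'')$ and $G(\sigma_{\alpha,-},\sigma_\alpha'')=G(\sigma_{\alpha,+},\sigma_\alpha')$. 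Hence $cJ=(c_+-c_-)\bigl[G(\sigma_{\alpha,+},\sigma_\alpha')-G(\sigma_{\alpha,+},\sigma_\alpha'')\bigr]$. By the defining relation $p(\alpha)e_{\alpha,+}(\bar x)=\int\ipro{\bar x}{\bar y}_+^\alpha\,d\sigma_\alpha'(\bar y)$ together with $\sigma_{\alpha,+}(e_{\alpha,+})=1$, one gets $G(\sigma_{\alpha,+},\sigma_\alpha')=p(\alpha)$.

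The main obstacle is the vanishing $G(\sigma_{\alpha,+},\sigma_\alpha'')=0$, which is exactly where the convex-cone hypothesis of Case II enters. I expect to argue that, when $[\texttt{supp}\,\bar\mu]$ preserves a salient convex cone $\mathcal C$, the transpose semigroup $[\texttt{supp}\,\bar\mu]^*$ preserves the dual cone $\mathcal C^*$, and the minimal measures can be labelled compatibly so that $\texttt{supp}\,\sigma_{\alpha,+}\subset\mathcal C\cap\mathbb S^{d-1}$ and $\texttt{supp}\,\sigma_\alpha'\subset\mathcal C^*\cap\mathbb S^{d-1}$ (whence $\sigma_\alpha''=\breve{\sigma}_\alpha'$ is supported in $-\mathcal C^*$), invoking the description of minimal sets and stationary measures of \cite{GuivarchLePage10}. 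Then $\ipro{\bar y}{\omega}\le 0$ for $\omega\in\texttt{supp}\,\sigma_{\alpha,+}$ and $\bar y\in\texttt{supp}\,\sigma_\alpha''$, so $\ipro{\bar y}{\omega}_+^\alpha=0$ and $G(\sigma_{\alpha,+},\sigma_\alpha'')=0$, giving $cJ=d\,p(\alpha)$ and the Case II formula. Finally, the imaginary part of $C_\alpha(v)$ equals $m_\alpha\frac{\Gamma(1-\alpha)}{\alpha}p(\alpha)\,d\,d^*(v)\sin\frac{\pi\alpha}{2}$, with $p(\alpha)>0$, $\sin\frac{\pi\alpha}{2}\ne0$ and $d^*(v)$ an odd, $\alpha$-homogeneous function that does not vanish identically when $d\ne0$ (by the symmetric roles of $\bar\mu,\bar\mu^*$ in Proposition \ref{rwgpro2-5}); hence $C_\alpha$ is real for all $v$ precisely when $d=c_+-c_-=0$, the converse implication being clear since $c_+=c_-$ makes $c\sigma_\alpha$, and thus $\Lambda$, symmetric.
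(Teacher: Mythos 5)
Your derivation of the two formulas is correct and, for Case I, is essentially the paper's own computation: the same classical integral $\int_0^\infty\bigl(e^{iat}-1-iat\,\mathbf 1_{(1,2)}(\alpha)\bigr)t^{-\alpha-1}dt=\Gamma(-\alpha)\norm{a}^\alpha e^{-i\frac{\pi\alpha}{2}\mathrm{sgn}(a)}$ (the paper writes the constant as $-\Gamma(1-\alpha)/\alpha$, which is the same thing), the same factor $1/\alpha$ coming from $\ell^\alpha[1,\infty)$, and the same identification of the double spherical integral with $p(\alpha)$. For Case II the paper says only that ``the calculation is similar'', so your reduction to the identity $cJ=d\,p(\alpha)$ --- proved via $\mathrm{sgn}(a)\norm{a}^\alpha=a_+^\alpha-a_-^\alpha$, the reflections $\sigma_{\alpha,-}=\breve{\sigma}_{\alpha,+}$, $\sigma_\alpha''=\breve{\sigma}_\alpha'$, the normalization $\sigma_{\alpha,+}(e_{\alpha,+})=1$, and the cone-support fact $G(\sigma_{\alpha,+},\sigma_\alpha'')=0$ --- supplies detail the paper omits, and it is sound and consistent with the paper's definitions: descending $\norm{\ipro{\bar x}{\bar y}}^\alpha=\ipro{\bar x}{\bar y}_+^\alpha+\ipro{\bar x}{\bar y}_-^\alpha$ to $\mathbb{P}^{d-1}$ gives $G(\sigma_{\alpha,+},\sigma_\alpha')+G(\sigma_{\alpha,+},\sigma_\alpha'')=p(\alpha)$, so your two claims about $G$ fit together exactly.

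There is, however, a genuine gap in the last assertion, ``$C_\alpha(v)$ is real if and only if $c_+=c_-$''. The nontrivial direction needs $d^*\not\equiv 0$ in Case II, and your justification --- ``by the symmetric roles of $\bar\mu,\bar\mu^*$ in Proposition \ref{rwgpro2-5}'' --- is not an argument. Role symmetry cannot yield this: the corresponding quantity on the $\bar\mu$ side, $d=c_+-c_-$, genuinely can vanish in Case II (e.g.\ when the law of $Q$ is symmetric), so an appeal to symmetric roles would, if anything, suggest that $d^*$ might vanish as well. What makes $d^*$ different is that the additive part of the companion recursion $W_n=M_n^*(W_{n-1}+v)$ is the \emph{fixed} vector $v$, and this is exactly where the paper's proof goes: take $v\in\texttt{supp}\,\sigma_\alpha'$; the convex cone generated by $\texttt{supp}\,\sigma_\alpha'$ is invariant under every map $x\mapsto g^*(x+v)$, $g\in\texttt{supp}\,\bar\mu$, hence $\eta_v$, and therefore its tail measure $\Delta_v$, is supported in that cone. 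Since $\texttt{supp}\,\sigma_\alpha''$ lies in the opposite cone, the decomposition $\Delta_v=c'(v)(\sigma_\alpha'\otimes\ell^\alpha)+c'(-v)(\sigma_\alpha''\otimes\ell^\alpha)$ forces $c'(-v)=0$, whence $d^*(v)=c'(v)=c^*(v)>0$. This is a short step --- it is the same cone-support mechanism you used to kill $G(\sigma_{\alpha,+},\sigma_\alpha'')$, applied now to $\eta_v$ rather than to the spherical stationary measures --- but without it the ``only if'' half of the final statement is unproved.
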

 \begin{proof}
 We use the classical formula(see \cite{IbraginovLinnik71Indpendent}):
 \begin{equation*}
    \int_0^{\infty} \frac{e^{itx}-1}{x^{\alpha+1}} d x = - \frac{\Gamma(1-\alpha)}{\alpha} |t|^{\alpha} e^{-i\frac{\alpha \pi}{2}} \mbox{ if }  0<\alpha<1, t>0.
 \end{equation*}
If $t<0$, the value of the corresponding integral is the complex conjugate of the above integral;  for $1<\alpha <2$, the same result is valid for the integral $\int_0^{\infty} \frac{e^{itx}-1-itx}{x^{\alpha+1}} d x   $ instead of the left hand side of the formula.

In case I, the definition of $\Delta_v(\widetilde{\Lambda}^1) $ gives for $ 0<\alpha <1$:
\begin{equation*}
  \Delta_v(\widetilde{\Lambda}^1)= c c^*(v)\int  ( \mathcal{X}_{\bar{y}}  (t\bar{x}) -1 ) \mathbf{1}_{(0,\infty)}(t) \mathbf{1}_{[1,\infty)}(t') d \sigma_{\alpha}(\bar{x}) d \sigma_{\alpha}^*(\bar{y}) d \ell^{\alpha} (t)   d \ell^{\alpha} (t')
\end{equation*}
We note that  $ \alpha \ell^{\alpha} (1,\infty)=1 $ and by the symmetry property of $\sigma_\alpha, \sigma_\alpha^*$:
\begin{equation*}
  p(\alpha) =2 \int  \ipro{\bar{x}}{\bar{y}}_+^{\alpha} d \sigma_{\alpha}(\bar{x}) d \sigma^*_{\alpha}(\bar{y}).
\end{equation*}
Then we get:
\begin{eqnarray*}
 \alpha \Delta_v(\widetilde{\Lambda}^1) &=& cc^*(v)\int  d \sigma_\alpha(\bar{x}) d \sigma^*_{\alpha} (\bar{y}) \int_0^{\infty} \frac{e^{it \ipro{\bar{x}}{\bar{y}} } -1 }{t^{\alpha+1}}  dt, \\
  &=&    -\frac{\Gamma(1-\alpha)}{\alpha} cc^*(v) \Big(  e^{i\frac{\alpha\pi}{2}} +e^{-\frac{\alpha\pi}{2}} \Big) \int \ipro{\bar{x}}{\bar{y}}_+^\alpha d \sigma_\alpha(\bar{x}) d \sigma^*_{\alpha} (\bar{y})    \\
 &=&    -\frac{\Gamma(1-\alpha)}{\alpha} cc^*(v) p(\alpha) \cos \frac{\alpha\pi}{2}.
\end{eqnarray*}
The stated formula follows and remains valid for $1<\alpha<2$.

In case II, the calculation is similar, using the definitions of $c,d,c^*(v), d^*(v)$.

In case I, $C_\alpha(v)=\alpha m_{\alpha} \Delta_v(\widetilde{\Lambda}^1)$ is real, as the above formula shows.

In case II, the formula gives that $C_{\alpha}(v)$ is real if and only if $dd^*(v)=0$ for any $v\neq 0$, i.e. $(c_+-c_-) (c'(v)-c'(-v))=0$.

If $v\in \texttt{supp}(\sigma'_{\alpha})$, the convex cone generated by $\texttt{supp}(\sigma'_{\alpha})$ is invariant under $ \texttt{supp}(\mu_v^*)$.  It follows that the measures $\eta_v, \Delta_v$ are supported by this cone, hence $c'(v)>0$, $c'(-v)=0$, and $c'(v)-c'(-v)>0$. Then the condition $ (c_+-c_-) (c'(v)-c'(-v))=0$ for any $v\neq 0$ is equivalent to $c_+=c_-$.
 \end{proof}
For $ t>0$, we consider the automorphism $ u_t$ of $H$ defined by $ u_t(h) =(tb, g)$ where $h=(b,g)$, and we write $u_t(\mu) $ for the push-forward of $\mu$ by $u_t$.

If $ \mu$ satisfies condition $C$  and $\eta$ is the corresponding stationary measure we denote:
\begin{equation*}
  \Lambda(\mu) =\lim_{x\rightarrow 0_+ } x^{-\alpha} (x.\eta),  \quad \Lambda(t) = \Lambda (u_t(\mu)),
\end{equation*}
 and we write $c(t), c_+(t), c_-(t), C_{\alpha}(v,t)$ for the quantities $ c,c_+,c_-,C_{\alpha}(v)$ associated with $ u_t(\mu)$. Furthermore,
 let $\tau_0$ be a probability on $V$ such that $\int |q|^{\alpha+\delta} d \tau_0(q)<\infty,$
 \begin{equation*}
  \tau_t= (1-t)\tau_0+ t\breve{\tau}_0, \quad \mu_t = \tau_t \otimes \bar{\mu} \quad  (t\in [0,1])
 \end{equation*}
and denote also by  $c_+^t,c_-^t,c^t,C_\alpha^t(v)$  the quantities $c_+,c_-,c,C_\alpha(v)$ associated with $\mu_t$.  We see that $\mu_t $ satisfies Condition  $C$, since $\bar{\mu}$ satisfies condition $i$-$p$ and $d>1$.  In the following corollary we gather some consequence of the above propositions,  which give information on the above quantities.
\begin{cor}
  For $t\in \mathbb{R}^*$, we have $\Lambda (t)= t.\Lambda$. If $t>0$, then $c(t)= t^\alpha c, c_+(t)=t^{\alpha} c_+$, $ C_{\alpha} (v,t)= t^{\alpha} C_\alpha(v)$.

  If the law of $Q$ is symmetric, then $c_+=c_-$ and $ C_{\alpha}(v)$ is real.

  Furthermore  $c_+^t,c_-^t,c^t,C_\alpha^t(v)$  depend continuously on $t$.  In particular, if $\texttt{supp} \bar{\mu}$ preserves the proper convex cone $\mathscr{C} \subset V$ and $\tau_0(\mathscr{C})=1$, then the values of $c^t_+-c^t_-$ for $t\in [0,1]$ fill the interval $[-c_+^0,c_+^0] $. If $\alpha<1$ and $ \texttt{supp} \mu$ preserves the proper convex cone $\mathscr{X}\subset V $, then the limiting law of $ n^{-1/\alpha} S_n^x$ is supported on $\mathscr{X} $.
 \end{cor}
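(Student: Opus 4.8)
The plan is to treat the four assertions separately, in each case reducing to homogeneity or symmetry properties of the tail measure $\Lambda$ and to the continuity statement of Proposition~\ref{rwgpro5-2}, which is the only genuinely analytic input; the probabilistic description of $\eta$ in Proposition~\ref{rwgpro2-2} and the convergence of $S_n^x$ in Theorem~\ref{rwgTh1-5} will do the rest.

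First, for the behaviour under $u_t$, I would observe that $u_t$ changes the law of $(Q,M)$ into that of $(tQ,M)$ while leaving $\bar\mu$ untouched; hence by Proposition~\ref{rwgpro2-2} the associated stationary variable is $R(t)=tQ_1+\sum_{k\ge1}M_1\cdots M_k(tQ_{k+1})=tR$, so the stationary measure of $u_t(\mu)$ is $t.\eta$. Passing to the tail, for $t>0$ the substitution $y=xt$ gives $\Lambda(t)=\lim_{x\to0_+}x^{-\alpha}\big(x.(t.\eta)\big)=t^\alpha\Lambda=t.\Lambda$, and for $t<0$ the same computation with $(xt).\eta=\breve{(|xt|.\eta)}$ yields $\Lambda(t)=|t|^\alpha\breve\Lambda=t.\Lambda$. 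Since $\bar\mu$ is invariant under $u_t$, so are all quantities built from $\bar\mu$ alone: the stationary measures $\sigma_\alpha,\sigma_{\alpha,\pm}$ on $\mathbb S^{d-1}$, the constants $m_\alpha$ and $p(\alpha)$, and the companion data $\eta_v,\Delta_v,c^*(v),d^*(v)$, which depend only on $\bar\mu^*$ and $v$. Comparing $\Lambda(t)=t^\alpha c\,\sigma_\alpha\otimes\ell^\alpha$ with $\Lambda(t)=c(t)\,\sigma_\alpha\otimes\ell^\alpha$ gives $c(t)=t^\alpha c$, and the same comparison on the extremal components gives $c_\pm(t)=t^\alpha c_\pm$. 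Finally $C_\alpha(v,t)=\alpha m_\alpha\Delta_v(\widetilde{\Lambda(t)}^{1})$, together with $\widetilde{\Lambda(t)}(y)=\int(\mathcal X_y(tx)-1)\,d\Lambda(x)=\widetilde\Lambda(ty)=t^\alpha\widetilde\Lambda(y)$ (and the analogous identities for $\alpha=1$ and $\alpha\ge2$), gives $C_\alpha(v,t)=t^\alpha C_\alpha(v)$.

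Second, for the symmetry statement, if the law $\tau$ of $Q$ is symmetric then replacing each $Q_k$ by $-Q_k$ leaves the joint law of $\big((Q_k),(M_k)\big)$ unchanged, so $-R$ and $R$ are equidistributed, i.e. $\eta=\breve\eta$ and hence $\Lambda=\breve\Lambda$. In case II, writing $\Lambda=(c_+\sigma_{\alpha,+}+c_-\sigma_{\alpha,-})\otimes\ell^\alpha$ with $\sigma_{\alpha,-}=\breve\sigma_{\alpha,+}$ supported on disjoint sets, the relation $\breve\Lambda=\Lambda$ forces $c_+=c_-$. Moreover $\Lambda=\breve\Lambda$ annihilates the odd parts of the integrands in $\widetilde\Lambda$, so $\widetilde\Lambda$ is real-valued; since $\Delta_v$ is a positive measure and $m_\alpha\in\mathbb R$, the identity $C_\alpha(v)=\alpha m_\alpha\Delta_v(\widetilde\Lambda^1)$ shows $C_\alpha(v)\in\mathbb R$ (consistently, in the explicit formula the imaginary $\sin\tfrac{\alpha\pi}2$–term carries the factor $d=c_+-c_-=0$).

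Third, for continuity and the interval-filling claim, the measures $\tau_t=(1-t)\tau_0+t\breve\tau_0$ all have finite moment of order $\alpha+\delta$ and depend continuously on $t$ in the weak topology of order $\alpha$; Proposition~\ref{rwgpro5-2} then gives continuity of $c^t$ and $c_+^t$, whence of $c_-^t=c^t-c_+^t$ and, through its formula, of $C_\alpha^t(v)$. Under the cone hypothesis one is in case II: at $t=0$, $\tau_0(\mathscr C)=1$ and the $\bar\mu$–invariance of $\mathscr C$ force $R\in\mathscr C$ a.s., so $\eta_0$ and hence $\Lambda^0$ are carried by $\mathscr C$; as $\mathscr C$ is proper, the minimal set $\texttt{supp}\,\sigma_{\alpha,-}=-\texttt{supp}\,\sigma_{\alpha,+}$ does not meet $\overline{\mathscr C}\cap\mathbb S^{d-1}$, so $c_-^0=0$ and $c_+^0-c_-^0=c_+^0$. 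At $t=1$, $\tau_1=\breve\tau_0$ gives $\eta_1=\breve\eta_0$ and $\Lambda^1=\breve{\Lambda^0}$, hence $c_+^1=0$, $c_-^1=c_+^0$ and $c_+^1-c_-^1=-c_+^0$. Continuity of $t\mapsto c_+^t-c_-^t$ and the intermediate value theorem then force it to take every value of $[-c_+^0,c_+^0]$. Finally, for the support claim when $\alpha<1$, I would argue directly on the walk: choosing $x\in\mathscr X$ and using that every $h\in\texttt{supp}\mu$ maps $\mathscr X$ into itself, an induction gives $X_k^x\in\mathscr X$ for all $k$, and since $\mathscr X$ is a convex cone and $t_n>0$, also $t_nS_n^x=t_n\sum_{k=0}^nX_k^x\in\mathscr X$; for $\alpha<1$ the centring is $d_n=0$, so by Theorem~\ref{rwgTh1-5}(2) the variables $t_nS_n^x$ converge in law to the $\alpha$–stable limit, which is therefore carried by the closed set $\mathscr X$, and the conclusion is independent of $x$. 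I would expect the main obstacle to be the case-II bookkeeping — isolating exactly which invariants depend only on $\bar\mu$ (and so survive $u_t$ and the replacement $\tau_0\rightsquigarrow\tau_t$), and proving cleanly that the support of $\eta_0$ in a proper cone kills the $\sigma_{\alpha,-}$–component, i.e. $c_-^0=0$.
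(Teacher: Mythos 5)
Your proof is correct and follows essentially the same route as the paper's: the scaling claims are read off from the definitions and the formula $C_\alpha(v)=\alpha m_\alpha\Delta_v(\widetilde{\Lambda}^1)$ (using that all quantities built from $\bar\mu$ alone are $u_t$-invariant), the symmetry claim comes from the symmetry of $R$ and hence of $\eta$ and $\Lambda$, the interval-filling claim from Proposition \ref{rwgpro5-2} plus evaluation at the endpoints $t=0,1$ and the intermediate value theorem, and the support claim from cone invariance of $X_n^x$, convexity, and Theorem \ref{rwgTh1-5}. Your write-up is in fact somewhat more detailed than the paper's (e.g.\ the explicit support-disjointness argument giving $c_-^0=0$ and the treatment of $t<0$), but the underlying argument is identical.
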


\begin{proof}
  The assertions for $\Lambda(t),c(t), c_+(t), c_-(t)$ follow directly from the definitions.  The formula $C_{\alpha}(v) =\alpha m_{\alpha} \Delta_v(\widetilde{\Lambda}^1)  $ implies $ C_{\alpha }(v,t) =t^{\alpha} C_{\alpha} (v)$.
  If the law of $Q$ is symmetric, the formula  $ R=Q+ \sum_{k=1}^{\infty} M_1\cdots M_k Q_{k+1}$ implies the symmetry of the law $\eta$ of $R$, hence $ \Lambda= \lim_{x\rightarrow 0_+} x^{-\alpha} (x.\eta)$ is also symmetric; it follows that $c_+=c_-$ and $C_{\alpha}(v) $ is real.

 Since $ \tau_t$ depends continuously on $t$ in the weak topology of order $\alpha$, Proposition \ref{rwgpro5-2} implies that $c^t, c_+^t, C^t_{\alpha}(v)$ depend continuously  on $t\in [0,1]$.  If $ \tau_0 (\mathscr{C}) =1$ and $ t=0$, then $ \texttt{supp} (\tau_0 \otimes \bar{\mu}) $ preserves $\mathscr{C}$, hence $c_+^0 >0$, $c_-^0=0$  and $ c_+^0-c_-^0 >0$, $c^1_+-c_-^1=-c_+^0 <0$. Then the continuity of $ c_+^t-c_-^t$ implies that all values in the interval $[-c_+^0,c_+^0] $ are taken by $ c_+^t-c_-^t$. If $ \texttt{supp} \mu$ preserves the cone $\mathscr{X}$ and $x\in \mathscr{X}$, we have $X_n^x\in \mathscr{X}$, hence by convexity $ n^{-1/\alpha} S_n^x \in \mathscr{X}$.
 Then, if $\alpha <1$, the limiting law of $ n^{-1/\alpha} S_n^x $  given by Theorem \ref{rwgTh1-5} is supported by $\mathscr{X} $.

%
%
%

\end{proof}
  In order to illustrate Theorem \ref{rwgTh1-5}, we consider,  as in \cite{Guivarch06DS}, the following
example where $d=2$, $ \mu= p \delta_h + p' \delta_{h'} $,  and $ 0<p<1$,
$h= \rho \left(                                                                                     \begin{array}{cc}
    \cos \theta & -\sin \theta  \\
     \sin \theta & \cos \theta  \\
  \end{array}
\right)
$, $ h'=\left[\left(
       \begin{array}{cc}
          \lambda & 0  \\
          0  &  \lambda' \\
       \end{array}
     \right), b \right]
$
  with  $ \theta \not \in  \mathbb{Q} \pi$, $ \rho>0$, $ 0<\lambda'<1<\lambda$, $b\neq 0 $.
Then $s_\infty=\infty$, $ \log \kappa(s)$ is convex on $[0,\infty[$ and if $ \rho$  is sufficiently small, $ L( \bar{ \mu})= \kappa'(0)<0$. Since $h'$ is proximal and $h$  is an irrational similarity, condition $i$-$p$ is satisfied by $ [\texttt{supp}\bar{\mu}]$. Since $ \theta \not \in  \mathbb{Q} \pi $, the limit set of
$[\texttt{supp} \bar{\mu}] $ is equal to $\mathbb{S}^1$ and we are in case I of Proposition \ref{rwgpro5-1}. If $\alpha \in [0,2]  $ with $ \alpha \neq 1$, we get that the limit law of the normalized Birkhoff sum  is symmetric and has Fourier transform $e^{\alpha m_\alpha c r_\alpha c^*(v)}$, where $c>0$, $ r_\alpha<0$ and $ c^*(v)= \norm{v}^{\alpha}c^*(\overline{v})$ is positive for $v \neq 0$.

If $\alpha=1$, the corresponding limit law is of Cauchy type, with Fourier transform
$e^{c m_1 r_1 \norm{v}c^*(\overline{v})}$, where $ c m_1 r_1 < 0$, $ c^*(\overline{v}) > 0$.
\appendix
\section*{Acknowledgements}
We are grateful to D. Buraczewski for useful remarks and information.  Thanks are due to an anonymous referee for careful reading of the original manuscript and for  helpful suggestions.

\end{document}